\title[]{An application of  collapsing levels to the representation theory of affine vertex algebras}
\author[Adamovi\'c, Kac, M\"oseneder, Papi, Per\v{s}e]{Dra{\v z}en~Adamovi\'c}
\author[]{Victor~G. Kac}
\author[]{Pierluigi M\"oseneder Frajria}
\author[]{Paolo  Papi}
\author[]{Ozren  Per\v{s}e}
\date{}
\begin{document}
	\def \Z{\mathbb Z}
	\def \C{\mathbb C}
	\def \R{\mathbb R}
	\def \Q{\mathbb Q}
	\def \N{\mathbb N}
	\def \k{\mathfrak k}
	\def \h{\mathfrak h}
	\def \D{\Delta}
	\def \d{\delta}
	\def \half{\tfrac{1}{2}}
	\def \vac{\mathbf 1}
	\def \tr{{\rm tr}}
	\def \span{{\rm span}}
	\def \Res{{\rm Res}}
	\def \End{{\rm End}}
	\def \E{{\rm End}}
	\def \Ind {{\rm Ind}}
	\def \Irr {{\rm Irr}}
	\def \Aut{{\rm Aut}}
	\def \Hom{{\rm Hom}}
	\def \mod{{\rm mod}}
	\def \ann{{\rm Ann}}
	\def \<{\langle}
	\def \>{\rangle}
	\def \t{\tau }
	\def \a{\alpha }
	\def \e{\epsilon }
	\def \l{\lambda }
	\def \L{\Lambda }
	\def \g{\mathfrak g}
	\def \b{\beta }
	\def \om{\omega }
	\def \o{\omega }
	\def \c{\chi}
	\def \ch{\chi}
	\def \cg{\chi_g}
	\def \ag{\alpha_g}
	\def \ah{\alpha_h}
	\def \ph{\psi_h}
	\def \be{\begin{equation}\label}
	\def \ee{\end{equation}}
	\def \bl{\begin{lem}\label}
		\def \el{\end{lem}}
	\def \bt{\begin{thm}\label}
		\def \et{\end{thm}}
	\def \bp{\begin{prop}\label}
		\def \ep{\end{prop}}
	\def \br{\begin{rem}\label}
		\def \er{\end{rem}}
	\def \bc{\begin{coro}\label}
		\def \ec{\end{coro}}
	\def \bd{\begin{de}\label}
		\def \ed{\end{de}}
	\def \pf{{\bf Proof. }}
	\def \voa{{vertex operator algebra}}

	\newcommand{\bea}{\begin{eqnarray}}
	\newcommand{\eea}{\end{eqnarray}}
	
	\newtheorem{thm}{Theorem}[section]
	\newtheorem{prop}[thm]{Proposition}
	\newtheorem{coro}[thm]{Corollary}
	\newtheorem{conj}[thm]{Conjecture}
	\newtheorem{lem}[thm]{Lemma}
	\newtheorem{rem}[thm]{Remark}
	\newtheorem{de}[thm]{Definition}
	\newtheorem{hy}[thm]{Hypothesis}
	\newtheorem{ex}[thm]{Example}
	\makeatletter \@addtoreset{equation}{section}
	\def\theequation{\thesection.\arabic{equation}}
	\makeatother \makeatletter

	\newcommand{\nno}{\nonumber}
	\newcommand{\lbar}{\bigg\vert}
	\newcommand{\p}{\partial}
	\newcommand{\dps}{\displaystyle}
	\newcommand{\bra}{\langle}
	\newcommand{\ket}{\rangle}
	\newcommand{\res}{\mbox{\rm Res}}
	\renewcommand{\hom}{\mbox{\rm Hom}}
	\newcommand{\epf}{\hspace{2em}$\Box$}
	\newcommand{\epfv}{\hspace{1em}$\Box$\vspace{1em}}
	\newcommand{\nord}{\mbox{\scriptsize ${\circ\atop\circ}$}}
	\newcommand{\wt}{\mbox{\rm wt}\ }
	\newcommand \Dim{\rm Dim}

\begin{abstract}
We discover a large class of  simple  affine vertex algebras $V_{k} (\g)$, associated to basic Lie superalgebras $\g$ at  non--admissible collapsing levels $k$,
having exactly   one   irreducible $\g$--locally finite module in the category ${\mathcal O}$.   In the case when $\g$ is a Lie algebra, we prove a complete reducibility result  for  $V_k(\g)$--modules at an arbitrary collapsing level.  We also determine the generators
of the maximal ideal in the universal affine vertex algebra $V^k (\g)$  at certain negative integer levels.  Considering some conformal embeddings  in the simple affine vertex algebras  $V_{-1/2} (C_n)$ and $V_{-4}(E_7)$,  we surprisingly obtain  the realization of non-simple affine vertex algebras  of types $B$ and $D$
  having exactly one   non-trivial ideal.
\end{abstract}

\maketitle

\section{Introduction}
Affine vertex algebras are one of the most interesting and important classes of vertex algebras. Categories
of modules for simple affine vertex algebra $V_{k}(\g)$, associated to a simple Lie algebra $\g$,  have mostly been studied in the case
of positive integer levels $k \in {\mathbb Z}_{\ge 0}$. These categories enjoy many nice properties such as:
finitely many irreducibles, semisimplicity, modular invariance of characters
(cf. \cite{FZ},  \cite{K1}, \cite{KP},   \cite{Z}).

In recent years, affine vertex algebras have attracted a lot of
attention because of their connection with affine ${\mathcal W}$--algebras $W_{k}(\g,f)$, obtained by
quantum Hamiltonian reduction (cf. \cite{FF}, \cite{FKW}, \cite{KRW}, \cite{KW2}). Since the quantum Hamiltonian reduction functor $H_{f} (\ \cdot \ )$
maps any integrable $\widehat{\mathfrak g}$--module to zero (cf. \cite{Araduke}, \cite{KRW}), in order to obtain interesting ${\mathcal W}$--algebras,
one has to consider affine vertex algebras $V_{k}(\g)$, for $k \notin {\mathbb Z}_{\ge 0}$.

It turns out that for certain non-admissible levels $k$ (such as negative integer levels),
the associated vertex algebras $V_{k}(\g)$ have finitely many irreducibles in category
$\mathcal O$  (cf. \cite{AM}, \cite{AM2}, \cite{P}), and their characters satisfy certain modular-like properties (cf. \cite{AK}). These affine
vertex algebras then give $C_2$--cofinite 
${\mathcal W}$--algebras
$W_{k}(\g,f)$, for properly chosen nilpotent element~$f$ (cf. \cite{KW-2008},  \cite{Kaw}).

In this paper, we classify irreducible modules in the category $KL_{k}$ (i.e. the category of   $\g$--locally finite
$V_{k}(\g)$--modules  in ${\mathcal O} ^ k$  (see Subsection \ref{ohr})) for a large family of collapsing levels $k$.
 Recall from \cite{AKMPP-JA} that a level $k$ is called {\it collapsing} if  the simple ${\mathcal W}$--algebra $W_{k}(\g, \theta)$, associated to a minimal nilpotent
element $e_{-\theta}$,  is isomorphic to its affine vertex subalgebra
$\mathcal V_{k} (\g ^{\natural})$ (see Definition \ref{CL} and \eqref{nus}). In the present paper we keep the notation of \cite{AKMPP-JA}. In particular, the highest root is normalized by the condition $(\theta, \theta) = 2$.
We discover a large family of vertex algebras having one irreducible module in the category $KL_{k}$, which
in a way extends the results on Deligne series from \cite{AM}.   Part (1) is proven there in the Lie algebra case.
\begin{thm} \label{thm-classification-unique-introd}
Assume  that  the level $k$ and the basic simple Lie superalgebra $\g$ satisfy one of the following conditions:
\begin{itemize}
\item[(1)]   $k=-\frac{h^\vee}{6}-1$ and $\g$ is one of the Lie algebras of exceptional Deligne's series $A_2$, $G_2$, $D_4$, $F_4$, $E_6$, $E_7$, $E_8$, or $\g=psl(m|m)$ ($m\ge2$), $osp(n+8|n)$ ($n\ge2$), $spo(2|1)$, $F(4)$, $G(3)$ (for both choices of $\theta$);
\item[(2)]  $k =-h^{\vee} / 2 + 1$ and $\g =osp( n + 4 m + 8  | n)$, $n \ge 2, m \ge 0 $.
\item[(3)] $k =-h^{\vee} / 2 + 1$ and $\g = D_{2 m}$, $m \ge 2$.
\item[(4)] $k = -10$ and $\g = E_8$.
\end{itemize}

Then
 $V_k(\g)$ is the unique irreducible $V_k(\g)$--module in the category $KL_k$.
\end{thm}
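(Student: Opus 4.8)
The plan is to first reduce the statement to a question about finite--dimensional $\g$--modules and Zhu's algebra, and then to settle that question by two complementary methods: explicit singular vectors for the integral levels, and minimal quantum Hamiltonian reduction combined with the collapsing hypothesis for the remaining cases. Concretely, let $M$ be an arbitrary irreducible object of $KL_k$. Being $\g$--locally finite and an object of $\mathcal O^k$, $M$ is a highest--weight $\widehat\g$--module $L_k(\l)$ whose top component is the finite--dimensional irreducible $\g$--module $V(\overline\l)$ with $\overline\l\in P^+$, and $M$ is a module over $V_k(\g)$ (and not merely over $V^k(\g)$) precisely when the maximal ideal $\mathcal J\subset V^k(\g)$ annihilates $M$, equivalently when $V(\overline\l)$ is killed by the image $\mathcal I$ of $\mathcal J$ under Zhu's isomorphism $A(V^k(\g))\cong U(\g)$. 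Since $V_k(\g)=L_k(0)$ is simple and obviously belongs to $KL_k$, the theorem is equivalent to the assertion that the only $\overline\l\in P^+$ for which $V(\overline\l)$ is a module over $A(V_k(\g))=U(\g)/\mathcal I$ is $\overline\l=0$; so I would reduce everything to pinning down $\mathcal I$ well enough to force $\overline\l=0$.

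For the integral levels --- notably $\g=D_{2m}$ at $k=-2(m-1)$, $\g=E_8$ at $k=-10$, and the remaining integral cases --- I would exhibit a singular vector $v\in V^k(\g)$ generating $\mathcal J$; producing such $v$ is the computation of generators of maximal ideals announced in the abstract, and at these levels $v$ can be chosen of conformal weight $d\le 3$. Its image under Zhu's map then lies in the $d$--th piece of the PBW filtration of $U(\g)$ and its symbol spans a prescribed irreducible $\g$--submodule of $S^2(\g)$ or $S^3(\g)$, whose decomposition is classical. The condition that $\mathcal I$ annihilate $V(\overline\l)$ becomes, isotypic component by isotypic component of the products $V(\overline\l)\otimes S^j(\g)$ with $j\le d$, the vanishing of explicit rational expressions in $\overline\l$ built from the quadratic Casimir value $(\overline\l,\overline\l+2\rho)$ (and, for $E_8$, a higher Casimir); a finite inspection then leaves only $\overline\l=0$.

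For the superalgebra cases and the exceptional Deligne's series I would instead use that $k$ is collapsing. Apply the minimal quantum Hamiltonian reduction functor $H_\theta=H_{e_{-\theta}}$: it is exact on the category $\mathcal O$ at level $k$, it sends $V_k(\g)$ to $W_k(\g,\theta)$, and it sends $L_k(\l)$ to $0$ or to an irreducible $W_k(\g,\theta)$--module. By the standard vanishing criterion, $H_\theta(L_k(\l))=0$ would force $\<\l,\a_0^\vee\>=k-\<\overline\l,\theta^\vee\>\in\Z_{\ge0}$, which is impossible here since $k<0$ while $\<\overline\l,\theta^\vee\>\ge0$. Hence $H_\theta(M)$ is a nonzero irreducible module over $W_k(\g,\theta)\cong\mathcal V_k(\g^\natural)$ --- this is exactly where the collapsing hypothesis enters --- it is an object of the analogue of $KL$ for $\g^\natural$, its $\g^\natural$--weight is the $\h^\natural$--component of $\overline\l$, and its lowest $L_0$--eigenvalue is the usual minimal--reduction value, quadratic in $\overline\l$, of the form $\tfrac{(\overline\l,\overline\l+2\rho)}{2(k+h^\vee)}-\<\overline\l,x_0\>$ up to a $\overline\l$--independent constant. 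In every case of the theorem the relevant level of $\mathcal V_k(\g^\natural)$ is one for which $KL$ over $\g^\natural$ has the vacuum as its only irreducible object --- either $\g^\natural$ has no simple part, so $\mathcal V_k(\g^\natural)=\C$, or the simple part sits at a level already controlled by the Lie--algebra results of \cite{AM} and by the other items of the theorem, after an induction on the rank. Thus $H_\theta(M)$ must be the vacuum module; its $\g^\natural$--weight then vanishes, forcing $\overline\l$ to be a multiple of the fundamental weight dual to $\theta^\vee$, and the vanishing of its conformal weight then forces $\overline\l=0$, i.e.\ $M=V_k(\g)$.

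I expect the genuine work to be exactly the inputs the sketch takes for granted: determining $\g^\natural$ and the precise level of $\mathcal V_k(\g^\natural)$ at each collapsing $k$ (using \cite{AKMPP-JA}, and the new maximal--ideal computation in the integral cases), and then classifying the irreducible objects of $KL$ over $\g^\natural$ at that level --- the step where the case--by--case analysis and the rank induction really live, and where one must rule out the spurious nonzero root of the quadratic conformal--weight equation using the specific collapsing value. The superalgebra cases carry extra subtleties: for $\g=psl(m|m)$ the one--dimensional center contributes a Heisenberg factor to $\mathcal V_k(\g^\natural)$, so one must check that local finiteness survives $H_\theta$ and that the abelian part produces no extra modules; and one must verify the $L_0$-- and weight--formulas for $H_\theta$ in the super setting (the $\widehat\rho$--shift, the normalization $(\theta,\theta)=2$, the choice of $x_0$), and that both admissible choices of $\theta$ work in the $spo(2|1)$, $F(4)$, $G(3)$ and $osp$ cases.
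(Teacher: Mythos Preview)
Your second approach --- minimal Hamiltonian reduction, the collapsing hypothesis, the quadratic conformal-weight identity, and induction on rank --- is precisely the paper's proof, and it handles \emph{all four} cases uniformly; there is no need to split off the integral levels. The paper treats case (3) as the $n=0$ specialization of case (2), proves case (2) by induction on $m$ using $W_{-2(m+1)}(osp(n+4m+8|n),\theta)\cong V_{-2m}(osp(n+4(m-1)+8|n))$ with base case $m=0$ already contained in (1), and reduces case (4) to case (1) via $H_\theta(V_{-10}(E_8))\cong V_{-4}(E_7)$. Your formula for the lowest conformal weight of $H_\theta(M)$ and your handling of the spurious nonzero root of the resulting quadratic match the paper's argument exactly. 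One small correction: for $psl(m|m)$ at $k=-1$ the collapsed $W$-algebra is $\C$, not a Heisenberg, so that particular worry does not arise.

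Your first approach, however, contains a factual error. For $D_{2m}$ at level $2-2m$ the maximal ideal is \emph{not} generated in conformal weight $\le 3$ once $m\ge 4$: besides the weight-$2$ vector $v_1$ one needs two further singular vectors $w_1$ and $\vartheta(w_1)$ of conformal weight $m$ and $\g$-types $V(2\omega_{2m})$ and $V(2\omega_{2m-1})$ respectively. The quotient by $\langle v_1\rangle$ alone still has infinitely many irreducibles in $KL^{2-2m}$ (Proposition~\ref{prop-rep-th-D}), so a Zhu-algebra computation using only symbols in $S^2(\g)$ or $S^3(\g)$ cannot conclude. The paper does present this singular-vector route as an alternative proof for the $D_{2m}$ case (Theorem~\ref{class-d-direct}), but with the correct weight-$m$ vectors; for $E_8$ at $k=-10$ no singular-vector proof is offered and the Hamiltonian-reduction argument is the only one given.
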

We also prove  a  complete reducibility result  in $KL_k$ (cf. Theorem \ref{cases-semi-simple}, Theorem \ref{rational}):
\begin{thm}  \label{collapsing-semi-simple}
Assume that $\g$ is a Lie algebra and $k \in {\C} \setminus {\Z}_{\ge 0}$. Then $KL_k$ is a  semi-simple category in the following cases:
\begin{itemize}
\item $k$ is a  collapsing level.
\item $W_k(\g, \theta)$ is a rational vertex operator algebra.
\end{itemize}
\end{thm}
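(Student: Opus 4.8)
The plan is to transport complete reducibility from the minimal $\mathcal W$-algebra $W_k(\g,\theta)$ back to $KL_k$ along the quantum Hamiltonian reduction functor. Let $H_\theta\colon \mathcal O^k\to W_k(\g,\theta)\text{-mod}$ be the minimal reduction functor; by Arakawa's theory it is exact, it sends a simple highest weight module $L_k(\l)$ of level $k$ either to an irreducible $W_k(\g,\theta)$-module or to $0$, and $H_\theta(L_k(\l))=0$ precisely when $k-\langle\bar\l,\theta^\vee\rangle\in\Z_{\ge 0}$. The first step is the elementary observation that every simple object of $KL_k$ has the form $L_k(k\L_0+\bar\l)$ with $\bar\l$ a dominant integral weight of $\g$, so $\langle\bar\l,\theta^\vee\rangle\in\Z_{\ge 0}$, whence for $k\in\C\setminus\Z_{\ge 0}$ the number $k-\langle\bar\l,\theta^\vee\rangle$ is never a non-negative integer. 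Thus $H_\theta$ annihilates no nonzero simple object of $KL_k$, and by exactness no nonzero object at all, so it is faithful on $KL_k$; it carries non-isomorphic simples to non-isomorphic irreducibles, and it carries every object of $KL_k$ to an \emph{ordinary} $W_k(\g,\theta)$-module (finitely generated, conformal weights bounded below, finite-dimensional graded pieces).

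The second step is to recognise the target as a semisimple category. If $k$ is collapsing then $W_k(\g,\theta)\cong\mathcal V_k(\g^\natural)$ is the affine vertex algebra of the reductive Lie algebra $\g^\natural$ at the induced level(s), and the explicit determination of collapsing levels for $\g$ a Lie algebra (cf. \cite{AKMPP-JA}) shows that the corresponding category of ordinary $\mathcal V_k(\g^\natural)$-modules — here $\g^\natural$ is a sum of simple ideals at non-negative integer levels together with an abelian part — is semisimple with finitely many simple objects. If instead $W_k(\g,\theta)$ is rational, its category of ordinary modules is semisimple with finitely many simple objects by definition; one moreover knows that $W_k(\g,\theta)$ is then lisse, so $X_{V_k(\g)}\subseteq\overline{\mathbb O_{\min}}$ and $V_k(\g)$ is quasi-lisse. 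Either way, $H_\theta(M)$ is a semisimple $W_k(\g,\theta)$-module for every $M\in KL_k$ and $\Irr(KL_k)$ is finite.

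The third step reduces semisimplicity of $KL_k$, via the maximal-semisimple-submodule argument, to the vanishing of $\mathrm{Ext}^1_{KL_k}(L,L')$ for all simple $L,L'$. Given a non-split extension $0\to L'\to E\to L\to 0$ in $KL_k$: since $d$ acts semisimply on objects of $\mathcal O^k$, decomposing by $L_0$-eigenvalue cosets modulo $\Z$ forces the conformal weights of $L$ and $L'$ to differ by an integer, and the Kac--Kazhdan determinant formula then forces their highest weights to be linked; combined with the finiteness of $\Irr(KL_k)$ and the explicit list of its objects at the level in question, this leaves only finitely many candidate pairs. Applying the exact functor $H_\theta$ produces an extension of the irreducible $W_k(\g,\theta)$-modules $H_\theta(L)$, $H_\theta(L')$, which splits by Step 2; using that $H_\theta(E)$ carries the residual $\widehat{\g^\natural}$-module structure (with $\mathcal V_k(\g^\natural)\subseteq W_k(\g,\theta)$) encoding the $\g$-module structure of $E$, one argues that this splitting lifts to a splitting of $E$, a contradiction.

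The main obstacle is exactly this last lifting. The functor $H_\theta$ is exact and faithful on $KL_k$ but not a priori full, so the semisimplicity of $W_k(\g,\theta)\text{-mod}$ does not formally yield $\mathrm{Ext}^1_{KL_k}(L,L')=0$; one must separately exclude "phantom" extensions, i.e. non-split extensions whose image under $H_\theta$ splits, and this is where the hypothesis is used concretely — via the allowed linkages at the given collapsing level (respectively, at a level with $W_k(\g,\theta)$ rational) together with the compatibility of $H_\theta$ with the conformal grading and with the $\g^\natural$-action. Once $\mathrm{Ext}^1_{KL_k}(L,L')=0$ is secured for all simple $L,L'$, every object of $KL_k$ is completely reducible, and the case distinction in the statement corresponds to the two ways the target category is seen to be semisimple.
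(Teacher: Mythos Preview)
Your Step~2 contains a factual error that breaks the argument for collapsing levels: it is \emph{not} true that, when $\g$ is a Lie algebra and $k$ is collapsing, $\g^\natural$ acquires non-negative integer levels. From Table~5, for instance, $W_{-4}(E_6,\theta)\cong V_{-1}(A_5)$, $W_{-6}(E_7,\theta)\cong V_{-2}(D_6)$, $W_{-10}(E_8,\theta)\cong V_{-4}(E_7)$, and $W_{-\ell}(A_{2\ell-1},\theta)\cong V_{-\ell+1}(A_{2\ell-3})$; in the case $\g=sl(n)$, $k=-1$, one gets the Heisenberg vertex algebra $M(1)$, which has \emph{infinitely} many simple ordinary modules. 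So the target category is not a priori semisimple, nor does it have finitely many simples. The paper handles this by a case-by-case induction (Theorem~\ref{cases-semi-simple}): one reduces $KL_k$ for $\g$ to $KL_{k'}$ for a smaller $\g^\natural$ at a (still negative) collapsing level, until one reaches a base case where semisimplicity is known directly.

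Your Step~3 is not a proof. You correctly identify that $H_\theta$ is exact and faithful on $KL_k$ but not full, so a splitting of $H_\theta(E)$ need not lift to $E$; the sentence ``one argues that this splitting lifts'' is exactly the content that is missing, and the hints about linkage and the $\g^\natural$-action do not supply it. The paper avoids this difficulty entirely by a different reduction: rather than attacking $\mathrm{Ext}^1$ between simples via $H_\theta$, it proves (Theorem~\ref{general-complete-reducibility}) that $KL_k$ is semisimple as soon as every \emph{highest weight} $V_k(\g)$-module in $KL_k$ is irreducible. That criterion is established using the contravariant duality $M\mapsto M^\sigma$ (Lemma~\ref{chev}) to flip extensions, together with Gorelik--Kac's $\mathrm{Ext}^1(L(\lambda),L(\lambda))=0$. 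The hypothesis is then verified (Lemma~\ref{kriterij}) by observing that $H_\theta$ sends a highest weight module $U$ in $KL_k$ to a nonzero highest weight module for $W_k(\g,\theta)$; if $W_k(\g,\theta)$ is rational this is automatically irreducible, and if $k$ is collapsing one proceeds by the induction above. This route never requires lifting a splitting through $H_\theta$.
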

 It is interesting that in some cases we have  that $KL_k$ is a semi-simple category, but there can exist indecomposable  but not irreducible  $V_k(\g)$--modules in the category $\mathcal O$.  In order to prove Theorem \ref{collapsing-semi-simple} we modified methods from \cite{GK} and \cite{DGK}  in a vertex algebraic setting. In particular we prove that the contravariant functor $M \mapsto M^{\sigma}$ from \cite{DGK} acts on  the category $KL_k$ (cf. Lemma \ref{chev}). Then for the proof of  complete reducibility in $KL_k$ it is enough to check that every highest weight $V_k(\g)$--module  in $KL_k$ is irreducible (cf. Theorem  \ref{general-complete-reducibility}).

Representation theory of a  simple affine vertex algebra $V_{k}(\g)$ is naturally connected with
the structure of the  maximal ideal in  the universal affine vertex algebra $V^{k}(\g)$. In the second
part of paper we present explicit formulas for singular vectors which generate the maximal ideal
in $V^{2-2\ell} (D_{2\ell} )$ (which is case (3) of Theorem \ref{thm-classification-unique-introd})
and $V^{-2} (D_{\ell})$. In the second case, we show that the Hamiltonian reduction functor $H_{\theta} (\ \cdot \ )$
gives an equivalence of the category of $\g$--locally finite $V_{-2} (D_{\ell})$--modules $KL_{-2}$ and the category
of modules for a rational vertex algebra $V_{\ell -4} (A_1)$. Singular vectors in $V^{k}(\g)$ for certain
negative integer levels $k$ have also been constructed in~\cite{A}.

We also apply our results to study the structure of conformally embedded subalgebras of some simple affine vertex algebras. \par
  As in \cite{AKMPP-new}, for a  subalgebra $\k$  of a simple Lie algebra $\g$,  we  denote  by   $\widetilde  V (k, \k)$ the vertex subalgebra of $V_k(\g)$ generated by $x(-1) \vac$, $x\in \k$.
If $\k$ is a reductive quadratic  subalgebra of $\g$, then we say
that $\widetilde  V (k, \k)$ is conformally embedded  in $V_k(\g)$  if the Sugawara-Virasoro vectors of both algebras coincide.
We also say that $\k$ is conformally embedded in $\g$ at level $k$ if $\widetilde  V (k, \k)$ is conformally embedded  in $V_k(\g)$.
\par
 We are able to prove that in  the cases listed  in Theorem \ref{thm-confemb-introd} below, $\widetilde  V (k, \k)$ is not simple.
On the other hand, we show that $V_{-1/2}(C_5)$ contains a {\it simple} subalgebra $V_{-2}(B_2) \otimes V_{-5/2}(A_1) $
(see Corollary \ref{coro-simpl}). For the  conformal embedding of  $D_6 \times A_1$ into $E_7$ at level $k = -4$,   we show that   $ \widetilde V (-4 ,D_6 \times A_1 )  =\mathcal{V}_{-4}(D_{6}) \otimes V_{-4}(A_1) $   where  $\mathcal{V}_{-4}(D_{6})$ is a quotient of the universal affine vertex algebra  $V^{-4} (D_6)$ by two singular vectors of conformal weights two and three (cf. \eqref{vmenoquattro}). Moreover,  $\mathcal{V}_{-4}(D_{6})$ has infini\-tely many irreducible modules in the category of $\g$--locally finite modules, which we explicitly describe. All of them appear in $V_{-4}(E_7)$ as submodules or subquotients.
\begin{thm} \label{thm-confemb-introd}
Let $\mathcal V_{k}  (D_{\ell})$, $\mathcal V_k( B_{\ell})$,   be the vertex algebras defined in \eqref{v-2d}, \eqref{v-2b}, \eqref{vmenoquattro}.
 Consider the following conformal embeddings:
\begin{itemize}
\item[(1)] $ D_{\ell} \times A_1 $ into $C_{2l }$  for $\ell \ge 4$ at  level $k =-\frac{1}{2}$.
\item[(2)]  $ B_{\ell} \times A_1 $ into $C_{2l+1  }$ for $\ell \ge 3$  at  level  $k =-\frac{1}{2}$.
\item[(3)]  $D_6 \times A_1$ into $E_7$ at  level $k = -4$.
\end{itemize}
Then,
\begin{itemize}
\item $\widetilde V (-\frac{1}{2}, D_{\ell} \times A_1 ) = \mathcal V_{-2} ( D_{\ell})  \otimes V_{-\ell} (A_1)$ in case (1),
\item  $\widetilde V (-\frac{1}{2}, B_{\ell} \times A_1 )  =  \mathcal V_{-2} ( B_{\ell})  \otimes V_{-\ell-1/2} (A_1)$ in case (2),
\item  $ \widetilde V (-4 ,D_6 \times A_1 ) = \mathcal V_{-4} (D_6) \otimes V_{-4} (A_1)$ in case (3).
\end{itemize}
Moreover, the algebras $\mathcal V_{k}  (D_{\ell})$, $\mathcal V_k( B_{\ell})$,  are non-simple, with a unique non-trivial ideal.
\end{thm}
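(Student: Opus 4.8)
The plan is to handle the three conformal embeddings in parallel. Write $\k=\k_1\times\k_2$ with $\k_1$ the orthogonal simple factor ($D_\ell$, $B_\ell$, or $D_6$) and $\k_2=A_1$, and let $k_1,k_2$ be the induced levels. These are read off from the Dynkin indices of $\k_i\subset\g$: the defining module of $C_{2\ell}$ restricts to $\C^{2\ell}\otimes\C^{2}$, an orthogonal space tensored with a symplectic one, so $D_\ell\times A_1\subset C_{2\ell}$ has $(k_1,k_2)=(-2,-\ell)$; likewise $B_\ell\times A_1\subset C_{2\ell+1}$ gives $(-2,-\ell-\half)$, and $D_6\times A_1\subset E_7$, where $\mathbf{133}=\mathbf{66}\oplus\mathbf{3}\oplus(\mathbf{32}\otimes\mathbf{2})$ as a $\k$-module, gives $(-4,-4)$. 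In each case, verifying that the Sugawara central charges of $\widetilde V(k,\k_1)$ and $\widetilde V(k,\k_2)$ add up to that of $V_k(\g)$ --- equivalently that the two conformal vectors sum to the conformal vector $\om$ of $V_k(\g)$ --- confirms conformality, and conformality forces $\widetilde V(k,\k)=\widetilde V(k,\k_1)\otimes\widetilde V(k,\k_2)$, each factor a $\Z_{\ge0}$-graded quotient of the universal affine vertex algebra $V^{k_i}(\k_i)$.

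The levels $-\ell$ ($\ell\ge4$), $-\ell-\half$ ($\ell\ge3$) and $-4$ all satisfy $k_2+2<0$, so $V^{k_2}(A_1)$ is already simple by the Kac--Kazhdan criterion, whence $\widetilde V(k,A_1)=V_{k_2}(A_1)$, the simple affine vertex algebra of the statement. For the orthogonal factor, a direct computation with the generating fields inside $V_k(\g)$ shows that the singular vectors of conformal weight $2$ (and, in the $E_7$ case, also $3$) defining $\mathcal V_{k_1}(\k_1)$ in \eqref{v-2d}, \eqref{v-2b}, \eqref{vmenoquattro} are sent to $0$, yielding a surjection $\mathcal V_{k_1}(\k_1)\twoheadrightarrow\widetilde V(k,\k_1)$. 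To see this is an isomorphism I would use the known structure of $V_k(\g)$ --- the symplectic boson ($\beta\gamma$-system) realization of $V_{-1/2}(C_n)$ in the first two cases, and Theorem \ref{thm-classification-unique-introd}(1) for $V_{-4}(E_7)$, valid since $-4=-h^\vee/6-1$ for $E_7$ --- and compare the graded dimensions of $\widetilde V(k,\k_1)$ and $\mathcal V_{k_1}(\k_1)$ in the first degree where $\mathcal V_{k_1}(\k_1)$ could acquire a further singular vector, finding them equal; hence the surjection has trivial kernel.

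For non-simplicity, note that $k_1+h^\vee$ equals $2\ell-4$, $2\ell-3$ and $6$ respectively, all positive, so $V^{k_1}(\k_1)$ is not simple; moreover the simple quotient $V_{k_1}(\k_1)$ has strictly smaller graded dimension than $\mathcal V_{k_1}(\k_1)$ --- some null relation among the currents of $V_{k_1}(\k_1)$ fails in $V_k(\g)$ --- so the maximal ideal of $V^{k_1}(\k_1)$ strictly contains the ideal defining $\mathcal V_{k_1}(\k_1)$, and $\mathcal V_{k_1}(\k_1)$ is not simple. Now, being a quotient of a universal affine vertex algebra, $\mathcal V_{k_1}(\k_1)$ has a unique maximal ideal $I$, so any non-trivial ideal is a nonzero proper submodule of $\mathcal V_{k_1}(\k_1)$ contained in $I$; thus ``unique non-trivial ideal'' is equivalent to $I$ being irreducible as a $\mathcal V_{k_1}(\k_1)$-module.

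To prove that, I would identify $I$, as a module over the affine Lie algebra $\widehat{\k}_1$, with one of the irreducible $\k_1$-locally finite $\mathcal V_{k_1}(\k_1)$-modules classified in the body of the paper --- a classification obtained by transporting, through the conformal embedding $\mathcal V_{k_1}(\k_1)\otimes V_{k_2}(A_1)\subset V_k(\g)$, the well-understood module category of $V_{k_2}(A_1)$ together with the structure of $V_k(\g)$, so that every such irreducible occurs in $V_k(\g)$ as a submodule or a subquotient. Since an irreducible module has no nonzero proper submodule, this shows $I$ is the unique non-trivial ideal. The main obstacle is precisely this circle of ideas: one needs sufficient control of the branching of $V_k(\g)$ over $\mathcal V_{k_1}(\k_1)\otimes V_{k_2}(A_1)$ --- equivalently of the full module category of $\mathcal V_{k_1}(\k_1)$ --- both to pin down $\widetilde V(k,\k_1)=\mathcal V_{k_1}(\k_1)$ in the earlier stage and to exclude any ideal strictly between $0$ and $I$.
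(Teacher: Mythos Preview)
Your overall plan --- identify the induced levels, settle the $A_1$ factor via simplicity of $V^{k_2}(A_1)$, then produce a surjection $\mathcal V_{k_1}(\k_1)\twoheadrightarrow\widetilde V(k,\k_1)$ --- matches the paper's skeleton, but the two decisive steps (injectivity of that surjection, and uniqueness of the non-trivial ideal) rest on arguments that do not work as you state them. ``Comparing graded dimensions in the first degree where a further singular vector could appear'' proves nothing: there is no a priori bound on where additional singular vectors may sit, so agreement in one degree does not force agreement in all. Your uniqueness argument is circular: you propose to use a classification of irreducible $\mathcal V_{k_1}(\k_1)$-modules ``obtained by transporting, through the conformal embedding, the module category of $V_{k_2}(A_1)$'', but any such transport already presupposes $\widetilde V(k,\k_1)\cong\mathcal V_{k_1}(\k_1)$ and control of the branching --- precisely the facts in question.

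The missing tool is the quantum Hamiltonian reduction functor $H_\theta$ of Theorem~\ref{A}, which is exact and kills no highest weight module in $KL^k$ when $k\notin\Z_{\ge0}$. The paper computes $H_\theta(\mathcal V_{-2}(D_\ell))=V^{\ell-4}(A_1)$, $H_\theta(\mathcal V_{-2}(B_\ell))=V^{\ell-7/2}(A_1)$, and $H_\theta(\mathcal V_{-4}(D_6))=\mathcal V_{-2}(D_4)$. Any non-trivial ideal $I\subset\mathcal V_{k_1}(\k_1)$ then yields a non-trivial ideal $H_\theta(I)$ in the target; since $V^{n}(A_1)$ (for $n\in\Z_{\ge0}$ or $n\in\half+\Z_{\ge0}$) and, by the $D_\ell$ case already established, $\mathcal V_{-2}(D_4)$ each have a \emph{unique} non-trivial ideal, $I$ is forced to be unique and simple. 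This is also what delivers the module classification (Propositions~\ref{clas-2-intermediate}, \ref{clas-2-intermediate-B}) --- it comes from $H_\theta$, not from the conformal embedding. Injectivity of $\overline\Phi$ then follows because the image supports infinitely many irreducibles in $KL$ (realized in the Weyl vertex algebra or in $V_{-4}(E_7)$), hence cannot be the simple quotient, so the kernel cannot be the unique non-trivial ideal and must vanish. Finally, in case~(3) the vanishing of $\vartheta(w_1)$ in $V_{-4}(E_7)$ is \emph{not} a direct field computation but a Frenkel--Zhu fusion-rules argument (Proposition~9.1): one shows that the tensor product of the two candidate $\overline V_{-4}(D_6)$-modules would have no admissible target, contradicting simplicity of $V_{-4}(E_7)$.
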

The decompositions of the embeddings above is still an open problem, and will be a subject of our forthcoming papers.

\vspace{0.5cm}

{\bf Acknowledgement.} We would like to thank Maria Gorelik, Tomoyuki Arakawa and Anne Moreau  for correspondence and discussions.

\section{Preliminaries}

We assume that the reader is familiar with the notion of vertex
(super)algebra (cf. \cite{Bo}, \cite{FLM}, \cite{K2}) and of simple basic   Lie  superalgebras (see \cite{K0}) and their affinizations (see \cite{K1} for the Lie algebra case).

Let $V$ be a conformal vertex algebra. Denote by $A(V)$ the
associative algebra introduced in \cite{Z}, called the Zhu algebra
of $V$.

\subsection{Basic Lie superalgebras and minimal gradings}For the reader's convenience we recall here the setting and notation of \cite{AKMPP-JA} regarding basic Lie superalgebras and their minimal gradings.
Let  $\g=\g_{\bar 0}\oplus \g_{\bar 1}$ be a simple finite dimensional basic   Lie  superalgebra.
We choose a Cartan subalgebra $\h$ for $\g_{\bar 0}$  and let $\D$ be the set of roots.  Assume $\g$ is not $osp(3|n)$.
 A root $-\theta$ is called {\it minimal} if it is even and there exists an additive  function $\varphi:\D\to \R$ such that $\varphi_{|\D}\ne 0$ and $\varphi(\theta)>\varphi(\eta),\,\forall\,\eta\in\D\setminus\{\theta\}$. 
Fix a minimal root $-\theta$ of $\g$. We may choose root vectors $e_\theta$ and $e_{-\theta}$ such that 
$$[e_\theta, e_{-\theta}]=x\in \h,\qquad [x,e_{\pm \theta}]=\pm e_{\pm \theta}.$$
Due to the minimality of $-\theta$, the eigenspace decomposition of $ad\,x$ defines a {\it minimal} $\frac{1}{2}\mathbb Z$-grading  (\cite[(5.1)]{KW2}):
\begin{equation}\label{gradazione}
\g=\g_{-1}\oplus\g_{-1/2}\oplus\g_{0}\oplus\g_{1/2}\oplus\g_{1},
\end{equation}
where $\g_{\pm 1}=\C  e_{\pm \theta}$.   We thus have  a bijective correspondence between minimal gradings (up to an automorphism of $\g$) and minimal roots (up to the action of the Weyl group). Furthermore, one has
\begin{equation}\label{gnatural}
\g_0=\g^\natural\oplus \C x,\quad\g^\natural=\{a\in\g_0\mid (a|x)=0\}.
\end{equation}
Note that  $\g^\natural$ is the centralizer of the triple $\{f_\theta,x,e_\theta\}$.
We can choose $
\h^\natural=\{h\in\h\mid (h|x)=0\},
$ as a  Cartan subalgebra of the Lie superalgebra $\g^\natural$,  so that $\h=\h^\natural\oplus \C x$.\par
For a given choice of a minimal root $-\theta$, we normalize the invariant bilinear form $( \cdot | \cdot)$ on $\g$ by the condition
\begin{equation}\label{normalized}
(\theta | \theta)=2.
\end{equation}
The dual Coxeter number $h^\vee$ of the pair $(\g, \theta)$ (equivalently, of the minimal gradation \eqref{gradazione}) is defined to be   half the eigenvalue of the Casimir operator of $\g$ corresponding to $(\cdot|\cdot)$, normalized  by \eqref{normalized}. Since $\theta$ is the highest root, we have that $2h^\vee=(\theta \vert \theta+2\rho)$ hence
\begin{equation}\label{rhotheta}
(\rho \vert \theta)=h^\vee-1.
\end{equation}

\par
 The complete list of the Lie superalgebras $\g^\natural$, the $\g^\natural$--modules $\g_{\pm 1/2}$ (they are isomorphic and  self-dual),  and $h^\vee$ for all possible choices of $\g$ and of $\theta$ (up to isomorphism)  is given in Tables  1,2,3 of \cite{KW2}. We reproduce them below. Note that in these tables 
 $\g=osp(m|n)$ (resp. $\g=spo(n|m)$) means that $\theta$ is the highest root of the simple component $so(m)$ (resp. $sp(n)$) of $\g_{\bar 0}$. Also, for $\g= sl(m|n)$ or $psl(m|m)$ we always take $\theta$ to be the highest root of the simple component $sl(m)$ of $\g_{\bar 0}$ (for $m=4$ we take one of the simple roots). Note that the exceptional Lie superalgebras $\g=F(4)$ and $\g=G(3)$ appear in both Tables  2 and 3, which corresponds to the two inequivalent choices of $\theta$, the first one being a root of the simple component $sl(2)$ of $\g_{\bar 0}$.
\vskip10pt
{\tiny
\centerline{Table 1}
\vskip 5pt
\noindent {\sl $\g$ is a simple Lie algebra.}
\vskip 5pt
\centerline{\begin{tabular}{c|c|c|c||c|c|c|c}
$\g$&$\g^\natural$&$\g_{1/2}$&$h^\vee$&$\g$&$\g^\natural$&$\g_{1/2}$&$h^\vee$\\
\hline
$sl(n), n\geq 3$&$gl(n-2)$&$\C^{n-2}\oplus (\C^{n-2})^* $&$n$&$F_4$&$sp(6)$&$\bigwedge_0^3\C^6$ & $9$\\\hline 
$so(n), n\geq 5$&$sl(2)\oplus so(n-4)$&$\C^2\otimes\C^{n-4}$&$n-2$&$E_6$&$sl(6)$&$\bigwedge^3\C^6$ & $12$\\\hline
$sp(n), n\geq 2$&$sp(n-2)$&$\C^{n-2} $&$n/2+1$&$E_7$&$so(12)$&$spin_{12}$ & $18$\\\hline
$G_2$&$sl(2)$&$S^3\C^2$&$4$&$E_8$&$E_7$&$\dim=56$ & $30$\\
\end{tabular}}
\vskip 25pt
\centerline{Table 2}
\vskip 5pt
\noindent {\sl $\g$ is not a Lie algebra but $\g^\natural$ is and $\g_{\pm1/2}$ is purely odd ($m\ge1$).}
\vskip 5pt
\centerline{\begin{tabular}{l|c|c|c||c|c|c|c}
$\g$&$\g^\natural$&$\g_{1/2}$&$h^\vee$&$\g$&$\g^\natural$&$\g_{1/2}$&$h^\vee$\\
\hline
$sl(2|m),$&$gl(m)$&$\C^{m}\oplus (\C^{m})^* $&$2-m$&$D(2,1;a)$&{\tiny $sl(2) \oplus sl(2)$}&$\C^2\otimes \C^2$ & $0$\\
$m\ne2$& & & & & & \\\hline
$psl(2|2) $&$sl(2)$&$\C^2\oplus\C^{2}$&$0$&$F(4)$&$so(7)$&$spin_7$ & $-2$\\\hline
$spo(2|m)$&$so(m)$&$\C^{m} $&$2-m/2$&$G(3)$&$G_2$&$ \Dim= 0|7$ & $-3/2$\\\hline
$osp(4|m)$&$sl(2)\oplus sp(m)$&$\C^2\otimes \C^m$&$2-m$\\
\end{tabular}}
\vskip 25pt
\centerline{Table 3}
\vskip 5pt
\noindent {\sl Both $\g$ and $\g^\natural$ are  not  Lie algebras ($m,n\geq 1$).}
\vskip 5pt
\centerline{\begin{tabular}{c|c|c|c}
$\g$&$\g^\natural$&$\g_{1/2}$&$h^\vee$\\
\hline
$sl(m|n)$, $m\neq n, m>2$&$gl(m-2|n)$&$\C^{m-2|n}\oplus(\C^{m-2|n})^*$&$m-n$\\
\hline
$psl(m|m),\,m>2$&$sl(m-2|m)$& $\C^{m-2|m}\oplus(\C^{m-2|m})^*$&$0$\\
\hline
$spo(n|m),\,n\ge 4$& $spo(n-2|m)$ &$\C^{n-2|m}$&$1/2(n-m)+1$\\
\hline
$osp(m|n),\,m\geq 5$&$osp(m-4|n)\oplus sl(2)$ &$\C^{m-4|n}\otimes \C^2$&$m-n-2$\\
\hline
 $F(4)$&$D(2,1;2)$ &$\Dim=6|4$& $3$\\
 \hline
$G(3)$&$osp(3|2)$ &$\Dim=4|4$& $2$\\
\end{tabular}}
}
In this paper we shall exclude the case of $\g=sl(n+2|n)$, $n>0$. In all other cases the Lie superalgebra $\g^\natural$ decomposes in a direct sum of all its minimal ideals, called components of $\g^\natural$:
$$\g^\natural=\bigoplus\limits_{i\in I}\g^\natural_i,
$$
where each  summand is either the (at most 1-dimensional)  center of $\g^\natural$ or is a basic simple Lie superalgebra different from $psl(n|n)$. Let $C_{\g^\natural_i}$  be the Casimir operator of $\g^\natural_i$ corresponding to $(\cdot|\cdot)_{|\g^\natural_i\times \g^\natural_i}$. We define the dual Coxeter number $h^\vee_{0,i}$ of $\g_i^\natural$ as half of the eigenvalue of $C_{\g^\natural_i}$  acting on $\g^\natural_i$ (which is $0$ if $\g_i^\natural$ is abelian).

Denote by $V_{\g} (\mu)$ (or   $V(\mu)$) the
irreducible finite-dimensional highest weight ${\mathfrak g}$--module with highest weight
$\mu$.  Denote by $P_+$ the set of  highest weights of irreducible finite-dimensional representations of  ${\mathfrak g}$.

Since $\h=\h^\natural \oplus \C x$, we have, in particular, that $\mu\in \h^*$ can be uniquely written as
\begin{equation}\label{mumus}
\mu=\mu_{|\h^\natural}+\ell\theta,
\end{equation} with $\ell\in\C$. If $\mu \in P_+$, then, since $\theta(\h^\natural)=0$, $\mu(\theta^\vee)=2\ell\in\Z$, so $\ell\in\half\Z_{\ge0}$.
\par

\subsection{Affine Lie algebras, vertex algebras, $\mathcal W$-algebras}\label{avw}
Let $\widehat{\mathfrak g}$ be the affinization of ${\mathfrak g}$:
$$
\widehat \g=\C[t,t^{-1}]\otimes \g\oplus \C K\oplus \C d
$$
with the usual commutation relations. We let $\d$ be the fundamental imaginary root. Let $\a_0=\d-\theta$ the affine simple root. Since $\theta$ is even, hence non-isotropic, so that $\a_0^\vee=K-\theta^\vee$ makes sense.

Denote by
$L(\lambda)$ (or $L_{{\mathfrak g}}(\lambda)$) the irreducible highest weight $\widehat{\mathfrak
g}$--module with highest weight $\lambda$. 

Denote by $V^{k}(\g)$ the universal affine vertex algebra
associated to $\widehat{\mathfrak g}$ of level $k \in \mathbb{C}$. We shall assume that $k\ne - h^{\vee}$. Then (see e.g. \cite{K2})
$V^{k}(\g)$ is a conformal vertex algebra with Segal-Sugawara conformal vector $\omega_\g$. Let $Y(\omega_\g,z)=\sum L_\g(n)z^{-n-2}$ be the corresponding Virasoro field. Denote by $V_{k}(\g)$ the (unique) simple quotient of $V^{k}(\g)$. Clearly, $V_{k}(\g) \cong L_{{\mathfrak g}}(k \Lambda _0)$
as $\widehat{\mathfrak g}$--modules.

Denote by $W^{k}(\g,\theta)$ the affine ${\mathcal W}$--algebra obtained from  $V^{k}(\g)$ by Hamiltonian reduction
relative to a minimal nilpotent element $e_{-\theta}$. Denote by $W_{k}(\g,\theta)$ the simple quotient of $W^{k}(\g,\theta)$.
 Recall that the vertex algebra ${W}^{k}(\g, \theta)$  is strongly and freely generated by elements  $J^{\{a\}}$, where $a$ runs over a basis of $\g^\natural$, $G^{\{v\}}$, where $v$ runs over a basis of $\g_{-1/2}$, and the
Virasoro vector  $\omega$. The elements $J^{\{a\}},\, G^{\{v\}}$ are primary of conformal weight $1$ and $3/2$,\, respectively, with respect to $\omega$.

Let $\mathcal{V}^k(\g^\natural)$ be the  subalgebra of the vertex algebra ${W}^{k}(\g, \theta)$,  generated by $\{J^{\{a\}}\mid a\in\g^\natural\}$.  The vertex algebra 
$\mathcal{V}^k(\g^\natural)$ is isomorphic to a universal affine vertex algebra. More precisely, letting 
\begin{equation}\label{ki}
k_i=k+\half({h^\vee-h_{0,i}^\vee}),\ i\in I,
\end{equation}
 the map $a\mapsto J^{\{a\}}$ extends   to an isomorphism
$\mathcal{V}^k(\g^\natural)\simeq \bigotimes_{i\in I}V^{k_i}(\g_i^\natural).
$

We also set $\mathcal V_k(\g^\natural)$ to be the image of $\mathcal{V}^k(\g^\natural)$ in ${W}_{k}(\g, \theta)$. Clearly we can write
\begin{equation}\label{nus}
\mathcal V_k(\g^\natural)\simeq \bigotimes_{i\in I} \mathcal V_{k_i}(\g_i^\natural),
\end{equation}
where $\mathcal V_{k_i}(\g_i^\natural)$ is some quotient (not necessarily simple) of $V^{k_i}(\g^\natural_i)$.

\subsection{Category $\mathcal O$ and Hamiltonian reduction functor}\label{ohr}
   Recall that $\widehat  \g$-module $M$ is in  category $\mathcal O ^k $    if it is $\widehat\h$-diagonalizable with finite dimensional weight spaces, $K$ acts as $k Id_M$ and  $M$ has  a finite number of maximal  weights.\par There is a remarkable functor  $H_\theta$ from $\mathcal O ^k $ to the category of  $W^k(\g,\theta)$-modules whose  properties will be very important in the following.
   We recall them in a form suitable for our purposes (see \cite{Araduke} for details; there $H_\theta$ is denoted by $H^0$). 
\begin{thm}\label{A} \ \begin{enumerate}
\item $H_\theta$ is exact.
\item If $L(\l)$ is  a irreducible highest weight  $\widehat\g$-module, then $\l(\a_0^\vee)\in \mathbb Z_{\geq 0}$ implies $H_\theta(L(\l))=\{0\}$. Otherwise $H_\theta(L(\l))$ is isomorphic to the irreducible $W^k(\g,\theta)$-module with highest weight $\phi_{\lambda}$ defined  by formula (67) in \cite{Araduke}.
\end{enumerate}
\end{thm}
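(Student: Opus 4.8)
The plan is to realize $H_\theta$ as the cohomology of the Kac--Roan--Wakimoto BRST complex and to extract all three assertions from the structural properties of that complex, following \cite{KRW} and \cite{Araduke}. To each $M\in\mathcal O^k$ one attaches the cochain complex $C(M)=M\otimes F^{ch}\otimes F^{ne}$, where $F^{ch}$ is the charged free--fermion Fock space associated with $\g_{1/2}\oplus\g_1$ and its dual, $F^{ne}$ the neutral free--fermion space associated with $\g_{1/2}$, and the differential $d$ is the sum of the affine action of $\C[t,t^{-1}]\otimes\g_{>0}$ on $M$, the Chevalley term of $\g_{>0}$, the coupling term with the neutral fermions, and the twist by $\chi=(e_{-\theta}\,|\,\cdot\,)$; then $H_\theta(M):=H^\bullet(C(M),d)$, a module over $W^k(\g,\theta)=H_\theta(V^k(\g))$. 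Since $M\mapsto C(M)$ is a tensor product over $\C$ with a fixed space and $d$ is functorial, a short exact sequence $0\to M_1\to M_2\to M_3\to0$ yields a short exact sequence of complexes, hence a long exact cohomology sequence. Therefore part (1) --- exactness of $H_\theta$ --- is \emph{equivalent} to the vanishing $H^i_\theta(M)=0$ for all $i\ne 0$ and all $M\in\mathcal O^k$, which is moreover exactly what is needed for the statement of part (2) to make sense.

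To prove this vanishing I would equip $C(M)$ with its conformal--weight grading (bounded below, with finite--dimensional graded pieces) and with Li's decreasing filtration induced from a good filtration on $M$. Passing to the associated graded collapses $C(M)$ to a \emph{classical} BRST complex --- a Chevalley--Eilenberg type complex whose differential is, after the $\chi$--shift, the Koszul differential on the coordinate ring of the affinized nilpotent subalgebra $\g_{1/2}\oplus\g_1$. A Koszul--resolution argument shows this associated--graded complex is acyclic in nonzero degree and has degree--zero cohomology equal to the expected $\chi$--twisted space of coinvariants; the filtration spectral sequence then converges, by boundedness of the grading, and degenerates, giving $H^i_\theta(M)=0$ for $i\ne 0$ together with a description of $\operatorname{gr} H_\theta(M)$. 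This is the step I expect to be the main obstacle: it is precisely the content of \cite{Araduke}, and requires both the Koszul vanishing over the loop algebra and the control of the spectral sequence. Granting it, part (1) follows from the long exact sequence.

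For part (2), fix an irreducible $L(\lambda)$ with highest weight vector $v_\lambda$. The vector $v_\lambda\otimes\vac\otimes\vac\in C(L(\lambda))$ is a cocycle of $W^k(\g,\theta)$--weight $\phi_\lambda$ (the weight recorded by the zero modes, formula (67) of \cite{Araduke}); since $L(\lambda)$ is a quotient of the Verma module $M(\lambda)$, exactness shows $H_\theta(L(\lambda))$ is a quotient of the highest weight module $H_\theta(M(\lambda))$, hence is itself highest weight, or zero, with at most one--dimensional top weight space. Next I would check that the contravariant dualities on $\mathcal O^k$ and on $W^k(\g,\theta)$--modules are intertwined by $H_\theta$, using that the Clifford and lattice data underlying $C(-)$ carry a compatible anti--involution under which $d$ is (up to sign) self--adjoint, so that $C(DM)\cong D\,C(M)$; since $L(\lambda)$ is self--dual, so is $H_\theta(L(\lambda))$, and a self--dual highest weight module with one--dimensional top weight space is irreducible (top and socle coincide, so a further composition factor isomorphic to the top would force a two--dimensional top weight space). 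Hence $H_\theta(L(\lambda))$ is $0$ or the irreducible $W^k(\g,\theta)$--module of highest weight $\phi_\lambda$. Finally, the alternative is decided by $\langle\lambda,\alpha_0^\vee\rangle$: if $\langle\lambda,\alpha_0^\vee\rangle\in\Z_{\ge 0}$ then $L(\lambda)$ is locally finite for the copy of $\widehat{\mathfrak{sl}}_2$ spanned by $e_{-\theta}\otimes t$, $\alpha_0^\vee$ and $e_\theta\otimes t^{-1}$, and an Euler--Poincar\'e computation --- the Euler characteristic of $C(M)$ depends only on $\operatorname{ch} M$ times the fixed ghost character, and cancels on such modules --- forces $H_\theta(L(\lambda))=0$, generalizing the vanishing of $H_\theta$ on integrable modules; conversely, when $\langle\lambda,\alpha_0^\vee\rangle\notin\Z_{\ge 0}$ a direct inspection of the lowest conformal--weight part of $C(L(\lambda))$, where the relevant space of cochains is small and explicit, shows $v_\lambda\otimes\vac\otimes\vac$ is not a coboundary, so $H_\theta(L(\lambda))\ne 0$.

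The formal parts --- deriving exactness from the degree $\ne 0$ vanishing, and the self--dual--highest--weight criterion for irreducibility --- are routine. The genuine work is the acyclicity in nonzero cohomological degree of the BRST complex over $\widehat\g$, with the duality intertwining $H_\theta\circ D\cong D\circ H_\theta$ and the explicit non--exactness computation for $\langle\lambda,\alpha_0^\vee\rangle\notin\Z_{\ge0}$ as secondary technical inputs.
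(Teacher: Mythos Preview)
The paper does not prove this theorem at all: Theorem~\ref{A} is stated in the preliminaries as a quotation of results from \cite{Araduke} (and \cite{KRW}), with the explicit remark ``We recall them in a form suitable for our purposes (see \cite{Araduke} for details; there $H_\theta$ is denoted by $H^0$).'' So there is no proof in the paper to compare against.

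Your proposal is a reasonable high-level sketch of the strategy in \cite{Araduke}, and the overall architecture --- vanishing of BRST cohomology in nonzero degree via a filtration/spectral sequence argument, exactness from the long exact sequence, irreducibility of $H_\theta(L(\lambda))$ via compatibility with contragredient duality, and the $\lambda(\alpha_0^\vee)$ dichotomy via an Euler--Poincar\'e/character computation --- is correct. A couple of technical points are slightly off relative to what Arakawa actually does: the filtration used there is not Li's filtration but one adapted to the Kazhdan grading (tensor degree weighted by the $ad\,x$--grading), and the associated graded differential is analyzed by splitting $d=d^{st}+d^\chi$ and first passing to cohomology with respect to the $\chi$--part, rather than by a single Koszul step. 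Also, the nonvanishing for $\lambda(\alpha_0^\vee)\notin\Z_{\ge 0}$ in \cite{Araduke} is obtained from the explicit character formula for $H_\theta(M(\lambda))$ together with exactness, rather than by an ad hoc inspection of low conformal weights. These are refinements rather than gaps; the substantive content you identify as ``the main obstacle'' is indeed the heart of \cite{Araduke}.
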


\subsection{Collapsing  levels}
\begin{de}\label{CL} Assume $k\ne - h^{\vee}$. If ${W}_{k}(\g, \theta)=\mathcal{V}_{k}(\g^\natural)$, we say that $k$ is  a {\sl collapsing level}.\end{de}

\begin{thm}\label{TT}\cite[Theorem 3.3]{AKMPP-JA} Let $p(k)$ be the polynomial listed in Table 4 below. Then
 $k$  is a collapsing level  if and only if $k\ne-h^\vee$ and
 $p(k)=0$. In such cases, \begin{equation}\label{collapse}
{W}_{k}(\g, \theta)=\bigotimes_{i\in I^*}V_{k_i}(\g^\natural_i),
\end{equation}
where $I^*=\{i\in I\mid k_i\ne 0\}$. If $I^*=\emptyset$, then ${W}_{k}(\g, \theta)=\C$.
\end{thm}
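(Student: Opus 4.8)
The plan is to reduce the statement to a direct computation on the strong generators of $W^k(\g,\theta)$ and the structure of its affine subalgebra $\mathcal V^k(\g^\natural)$. First I would recall the defining property of a collapsing level: $W_k(\g,\theta)=\mathcal V_k(\g^\natural)$. By \eqref{nus} the right-hand side is a quotient of $\bigotimes_{i\in I}V^{k_i}(\g_i^\natural)$, and since $W_k(\g,\theta)$ is simple, this quotient must itself be simple; a tensor product of affine vertex algebras $\bigotimes_i A_i$ is simple iff each $A_i$ is simple, so $\mathcal V_{k_i}(\g_i^\natural)=V_{k_i}(\g_i^\natural)$ for every $i$. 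Moreover a simple affine vertex algebra $V_{k_i}(\g_i^\natural)$ at level $0$ is just $\C$ (the vacuum), so those factors drop out and we are left with the index set $I^*=\{i\mid k_i\ne 0\}$; if $I^*$ is empty the whole thing collapses to $\C$. Thus \eqref{collapse} follows once we know $k$ is collapsing, and what remains is the characterization $k$ is collapsing $\iff k\ne -h^\vee$ and $p(k)=0$.

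For the "if and only if" I would compute the image of the Virasoro vector. Let $\omega$ be the conformal vector of $W^k(\g,\theta)$ and let $\omega^{\natural}=\sum_i \omega_{\g_i^\natural}$ be the Sugawara vector of the affine subalgebra $\mathcal V^k(\g^\natural)\simeq\bigotimes_i V^{k_i}(\g_i^\natural)$ (defined whenever each $k_i\ne -h^\vee_{0,i}$, which one checks holds in the relevant range, or argue around the degenerate cases separately). Collapsing is equivalent to the statement that $W_k(\g,\theta)$ is generated by $\{J^{\{a\}}:a\in\g^\natural\}$, i.e.\ that the images of the remaining strong generators $G^{\{v\}}$ (for $v\in\g_{-1/2}$) and of $\omega-\omega^\natural$ vanish in the simple quotient. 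The vector $\omega-\omega^\natural$ has conformal weight $2$ and, crucially, it is (up to the $G^{\{v\}}$ contributions) a highest-weight/singular vector for $\g^\natural$; its norm, or equivalently the eigenvalue by which it acts, is a rational function of $k$ whose numerator is exactly $p(k)$ after clearing denominators. So the key computation is: express $\langle \omega-\omega^\natural, \omega-\omega^\natural\rangle$ (or the relevant structure constant forcing $G^{\{v\}}\mapsto 0$) as an explicit function of $k$, $h^\vee$, and the $h^\vee_{0,i}$, using the $\lambda$-bracket relations of $W^k(\g,\theta)$ from \cite{KW2}/\cite{KRW}, and identify its zero locus with the vanishing of $p(k)$ in Table 4. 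One must also verify that $G^{\{v\}}\mapsto 0$ is implied by (indeed equivalent to) the same condition — typically the OPE $G^{\{v\}}_{(1)}G^{\{w\}}$ contains $\omega-\omega^\natural$ plus current terms, so if $G^{\{v\}}$ survives then so does $\omega-\omega^\natural$, and conversely the vanishing of the relevant central-like term at $p(k)=0$ kills $G^{\{v\}}$; alternatively invoke Theorem \ref{A} and known character/asymptotic-dimension arguments to see that the only way $W_k(\g,\theta)$ can be this small is the collapsing condition.

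The main obstacle, as usual for such statements, is the explicit case-by-case computation of the conformal weight $2$ structure constants in $W^k(\g,\theta)$ when $\g^\natural$ is not simple: one must track the contributions of each component $\g_i^\natural$ and of the center separately, and the degenerate situations where some $k_i=0$ or $k_i=-h^\vee_{0,i}$ (so that $\omega_{\g_i^\natural}$ is ill-defined) need separate handling. Also the excluded and special cases — $\g=sl(n+2|n)$ is removed, $\g=osp(3|n)$ is excluded, and $F(4),G(3),D(2,1;a)$ occur with two choices of $\theta$ — have to be matched against the entries of Table 4 one by one. But this is precisely the content of \cite[Theorem 3.3]{AKMPP-JA}, which is being quoted here, so in the present paper the proof is simply a citation: the statement is \emph{assumed} as an input (Theorem \ref{TT} is labeled with its source), and what actually needs to be said is only the short deduction of \eqref{collapse} from simplicity of $W_k(\g,\theta)$ together with \eqref{nus}, exactly as in the first paragraph above.
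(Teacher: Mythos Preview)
Your final paragraph has it exactly right: in the present paper Theorem~\ref{TT} is stated with its source \cite[Theorem 3.3]{AKMPP-JA} and is \emph{not} proved here at all---there is no argument following the statement, not even the short deduction of \eqref{collapse} that you outline; the entire statement, including \eqref{collapse} and the identification of $I^*$, is quoted as input.

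The longer sketch you give in the first two paragraphs is a reasonable outline of how the argument in \cite{AKMPP-JA} actually goes (the key quantity there is indeed governed by the vanishing of $G^{\{u\}}_{(0)}G^{\{v\}}$ in the simple quotient, and the polynomial $p(k)$ arises from the explicit $\lambda$-bracket formulas of \cite{KW2}), but none of it belongs in a proof here. If you were writing this paper, the correct ``proof'' is simply to omit one: the theorem is labeled with its citation and used as a black box.
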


{\small
\centerline{Table 4}}
\vskip 5pt
\noindent {\sl Polynomials $p(k)$.}
\vskip 5pt

{\tiny
\vskip 5pt
\centerline{\begin{tabular}{c|c||c|c}
$\g$&$p(k)$&$\g$&$p(k)$\\
\hline
$sl(m|n)$, $n\ne m$&$(k+1) (k+(m-n)/2)$&$E_6$&$(k+3) (k+4)$\\
\hline
$psl(m|m)$&$ k (k+1)$&$E_7$&$(k+4)(k+6)$\\
\hline
$osp(m|n)$&$(k+2) (k+(m-n-4)/2)$&$E_8$&$(k+6) (k+10)$\\
\hline
$spo(n|m)$&$(k+1/2) (k+(n-m+4)/4)$&$F_4$&$(k+5/2) (k+3)$\\
\hline
$D(2,1;a)$&$(k-a)(k+1+a)$&$G_2$&$ (k+4/3) (k+5/3)$\\
\hline
$F(4)$, $\g^\natural=so(7)$ & $(k+2/3)(k-2/3)$ &$G(3)$, $\g^\natural=G_2$ & $(k-1/2)(k+3/4)$  \\
\hline
$F(4)$, $\g^\natural=D(2,1;2)$ & $(k+3/2)(k+1)$ &$G(3)$, $\g^\natural=osp(3|2)$ & $(k+2/3)(k+4/3)$  \\
\end{tabular}}}
\subsection{Weyl vertex algebra}
Let $M_{\ell}$ denote the Weyl vertex algebra (also called symplectic bosons) generated by even elements $a^{\pm} _i$, $i =1, \dots, \ell$ satisfying the following $\lambda$--brackets
$$[ ( a_i ^{\pm}  )_ {\lambda}  (a_j ^{\pm} ) ] =0, \quad   [(a_i ^{+}  )_{\lambda}  ( a_j ^ {-} ) ]  = \delta_{i,j}. $$
Recall  also that the symplectic affine vertex algebra $V_{-1/2} (C_{\ell})$ is realized as a $\Z_2$--orbifold of $M_{\ell}$ (see \cite{FeF}).

 \section{ The category $KL_k$} \label{Section 2}

 Let $k$ be  a noncritical level. Note that the Casimir element  of $\widehat \g$ can be expressed as $\Omega = d + L_{\g} (0)$;  it commutes with $\widehat{\mathfrak g}$--action.
  
 Consider the category $\mathcal C^k$ of modules for the  universal affine vertex algebra $V^{k}(\g)$, i.e. the category of restricted  $\widehat \g$--modules of level $k$. Regard $M\in \mathcal C^k$ as a
   $\widehat{\mathfrak g}$--module by letting  $d$ act as $-L_{\g} (0)$. 
   Let  $KL^k$ be the category of modules  $M\in \mathcal C^k$ such that, as $\widehat \g$-modules, are in $\mathcal O^k $ and which  admit the following weight  space decomposition with respect to $L_\g(0)$:
$$ M = \bigoplus_{\alpha \in {\C} } M (\alpha), \quad L_{\g} (0) \vert M(\alpha) \equiv \alpha\, \mbox{Id}, \  \dim M(\alpha) < \infty.  $$
 Our definition is related but different from the one introduced in \cite{Ara-2015}. 
Let $KL_k$ be the category of all modules in $KL^k$ which are $V_k(\g)$--modules.

 \begin{rem}
  If  $V_k(\g)$  has finitely many irreducible modules in the category $KL^k$, one can show that every $V_k(\g)$--module $M$ in $KL_k$ is of finite length. This happens when $k$ is admissible (cf.  \cite{Araduke}) and  when $V_k(\g)$ is quasi-lisse (cf. \cite{AK}).
  But when $V_k(\g)$ has infinitely many irreducible modules in $KL^k$ (as in  the cases considered in   \cite{KL}, \cite{AP}),  then  one can have modules in $KL_k$ of infinite length.   \end{rem}
 Recall that there is a one-to-one correspondence between irreducible ${\Z}_{\ge 0}$--graded modules for  a  conformal vertex algebra $V$ (with a conformal vector $\omega$, such that $Y(\omega, z) =\sum_{i\in\mathbb Z} L(i) z^{-i-2}$)    and irreducible modules for the corresponding Zhu algebra $A(V)$ \cite{Z}. This  implies, in particular,  that there is a one-to-one correspondence between irreducible finite-dimensional $A(V)$--modules and irreducible ${\Z}_{\ge 0}$--graded $V$--modules whose graded components,  which are eigenspaces for $L(0)$, are finite-dimensional.
  In the case of affine vertex algebras, we have the following simple interpretation.
 \begin{prop} \label{zhu-int} Let $\widetilde V_k(\g)$ be a quotient of $V^k(\g)$ (not necessary simple).  Consider  $\widetilde V_k(\g)$ as a conformal vertex algebra with conformal vector $\omega_{\g}$. Then
   there is a  one-to-one correspondence between irreducible  $\widetilde V_k(\g)$ in the category $KL^k$  and irreducible finite-dimensional $A(\widetilde V_k(\g))$--modules.
 \end{prop}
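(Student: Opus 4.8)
The plan is to deduce Proposition \ref{zhu-int} from the general Zhu-algebra correspondence recalled just above it, by identifying exactly which irreducible $\mathbb Z_{\ge 0}$-graded $\widetilde V_k(\g)$-modules lie in $KL^k$. First I would recall that by \cite{Z} (as stated in the paragraph preceding the proposition) irreducible $A(\widetilde V_k(\g))$-modules correspond bijectively to irreducible $\mathbb Z_{\ge 0}$-graded $\widetilde V_k(\g)$-modules $M=\bigoplus_{n\ge 0}M_n$ with $M_0$ the module over $A(\widetilde V_k(\g))$; and that finite-dimensionality of the $A$-module is equivalent to finite-dimensionality of each graded piece $M_n$ (each an $L_\g(0)$-eigenspace). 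So the content of the proposition is to show that for $\widetilde V_k(\g)$ a quotient of $V^k(\g)$, an irreducible $\widetilde V_k(\g)$-module is $\mathbb Z_{\ge 0}$-graded with finite-dimensional graded pieces \emph{if and only if} it lies in $KL^k$.

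The key steps I would carry out are as follows. (i) Since $\widetilde V_k(\g)$ is a quotient of $V^k(\g)$, any $\widetilde V_k(\g)$-module is in particular a restricted $\widehat\g$-module of level $k$, i.e. an object of $\mathcal C^k$, and the state-field correspondence forces the action of $L_\g(0)$ coming from the conformal structure $\omega_\g$ to agree with the Sugawara operator; thus on any such module $d=-L_\g(0)$ and $\Omega=d+L_\g(0)=0$ acts as $0$, or rather $L_\g(0)$ coincides with the grading operator. (ii) For an irreducible $\mathbb Z_{\ge 0}$-graded module $M=\bigoplus_{n\ge 0}M_n$, the top piece $M_0$ is a module over $A(\widetilde V_k(\g))$, which is a quotient of $A(V^k(\g))\cong U(\g)$; irreducibility of $M$ forces $M_0$ to be an irreducible $U(\g)$-module, and the grading by $L_\g(0)$ restricts to $M_0$ as a scalar (the lowest conformal weight), so $M_0$ is a weight module for $\h$; finite-dimensionality of $M_0$ (equivalently of the $A$-module) together with the $\mathbb Z_{\ge 0}$-grading then gives that $M$ is $\widehat\h$-diagonalizable with finite-dimensional weight spaces and finitely many maximal weights, i.e. $M\in\mathcal O^k$, and the $L_\g(0)$-eigenspace decomposition has the required finite-dimensional pieces — so $M\in KL^k$. (iii) Conversely, if $N\in KL^k$ is an irreducible $\widetilde V_k(\g)$-module, then by definition it has an $L_\g(0)$-eigenspace decomposition with finite-dimensional eigenspaces and (being in $\mathcal O^k$) the real parts of the $L_\g(0)$-eigenvalues are bounded below and lie in a finite union of cosets of $\mathbb Z_{\ge 0}$; irreducibility and the standard argument on the action of the affine Lie algebra force the eigenvalues to lie in a single coset $h+\mathbb Z_{\ge 0}$, so after the shift $N$ is an irreducible $\mathbb Z_{\ge 0}$-graded $\widetilde V_k(\g)$-module with finite-dimensional graded pieces, hence corresponds to an irreducible finite-dimensional $A(\widetilde V_k(\g))$-module. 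Combining (ii) and (iii) with the Zhu correspondence gives the bijection.

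The main obstacle I expect is step (iii): proving that an irreducible module in $KL^k$ is genuinely $\mathbb Z_{\ge 0}$-graded rather than merely $L_\g(0)$-diagonalizable with eigenvalues in finitely many cosets of $\mathbb Z$. This is where one must use that $N$ is irreducible as a $\widehat\g$-module together with the fact that $\g_t\otimes t^{-n}$ raises $L_\g(0)$-eigenvalue by $n$ while $\g\otimes t^n$ lowers it, so the subspace spanned by a single coset of lowest eigenvalue is a nonzero submodule, hence all of $N$; one also needs that a maximal weight exists (guaranteed by $N\in\mathcal O^k$) to pin down the bottom of the grading. A secondary point requiring a little care is that $A(\widetilde V_k(\g))$ is a quotient of $U(\g)$ rather than $U(\g)$ itself, so irreducible $A(\widetilde V_k(\g))$-modules are precisely those irreducible $\g$-modules annihilated by the image of the maximal ideal defining $\widetilde V_k(\g)$; this is automatic on the top component of any $\widetilde V_k(\g)$-module and imposes no extra work, but it should be noted so that the correspondence is stated for the correct quotient. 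Everything else is a direct application of Zhu's theorem as recalled in the excerpt.
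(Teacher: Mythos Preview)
Your proposal is correct and follows exactly the line the paper intends: the paper gives no proof of Proposition~\ref{zhu-int}, presenting it as an immediate specialization of the Zhu correspondence recalled in the preceding paragraph, and your argument simply spells out that for a quotient of $V^k(\g)$ the irreducible $\mathbb Z_{\ge 0}$-graded modules with finite-dimensional graded pieces are precisely the irreducibles in $KL^k$. The minor imprecisions (the typo $\g_t\otimes t^{-n}$, and the phrasing about real parts of eigenvalues) do not affect the argument.
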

 \begin{coro}
 \label{simplicity-1}
 Assume that $\g$ is a simple {\it basic } Lie    superalgebra and   $\widetilde V_k(\g)$ is a quotient of $V^k(\g)$ such that  the  trivial module ${\C}$ is the unique  finite-dimensional irreducible  $A(\widetilde V_k(\g))$--module. Then $\widetilde V_k(\g) = V_k(\g)$.
 \end{coro}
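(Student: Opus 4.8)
The plan is to deduce Corollary~\ref{simplicity-1} from Proposition~\ref{zhu-int} by a short argument on the level of Zhu algebras and simple quotients. First I would recall that for any quotient $\widetilde V_k(\g)$ of $V^k(\g)$, the Zhu algebra $A(\widetilde V_k(\g))$ is a quotient of $A(V^k(\g))\cong U(\g)$; in particular its finite-dimensional irreducible modules are precisely certain finite-dimensional irreducible $\g$-modules $V_\g(\mu)$, $\mu\in P_+$, namely those on which the defining ideal of $A(\widetilde V_k(\g))$ acts trivially. Under this identification, the trivial $\g$-module $\C$ always survives (it corresponds to $\mu=0$, i.e. to the vacuum module), so the hypothesis is that it is the \emph{only} finite-dimensional irreducible $A(\widetilde V_k(\g))$-module.

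Next I would invoke Proposition~\ref{zhu-int}: irreducible $\widetilde V_k(\g)$-modules in $KL^k$ correspond bijectively to finite-dimensional irreducible $A(\widetilde V_k(\g))$-modules. By hypothesis there is exactly one such, so $\widetilde V_k(\g)$ has a unique irreducible object in $KL^k$, and it must be $\widetilde V_k(\g)$ itself regarded as a module over itself (the image of the vacuum), since that is an irreducible object in $KL^k$ whose top is $\C$. The key point is then that $\widetilde V_k(\g)$, being a $\Z_{\ge0}$-graded $\widetilde V_k(\g)$-module generated by the vacuum with one-dimensional top component, is actually \emph{irreducible} as a module over itself: any proper submodule would be a proper graded ideal, hence (being a quotient of $V^k(\g)$, which is generated in conformal degree $\le 1$) would be generated by vectors of positive conformal weight, and the quotient by the maximal such ideal is the simple vertex algebra $V_k(\g)$; if $\widetilde V_k(\g)\ne V_k(\g)$ this maximal ideal $J$ is nonzero. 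I would argue that $J$, or rather an irreducible $\widehat\g$-subquotient of it lying in $KL^k$, produces an irreducible $\widetilde V_k(\g)$-module in $KL^k$ \emph{different} from $\widetilde V_k(\g)$ — equivalently a nontrivial finite-dimensional irreducible $A(\widetilde V_k(\g))$-module — contradicting the hypothesis. Concretely: $J$ is a nonzero object of $KL^k$ (it is a $\widehat\g$-submodule of $\widetilde V_k(\g)\in KL^k$), hence contains an irreducible $\widehat\g$-subquotient $L_\g(\lambda)$ with $\lambda$ of positive conformal weight; its image in the Zhu algebra picture is a finite-dimensional irreducible $A(\widetilde V_k(\g))$-module $V_\g(\bar\lambda)$ with $\bar\lambda\ne 0$, which is the contradiction.

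The cleanest route, and the one I would actually write, short-circuits this: by Proposition~\ref{zhu-int} applied to \emph{both} $\widetilde V_k(\g)$ and its simple quotient $V_k(\g)$, the surjection $\widetilde V_k(\g)\twoheadrightarrow V_k(\g)$ induces a surjection $A(\widetilde V_k(\g))\twoheadrightarrow A(V_k(\g))$, and $V_k(\g)$ already has $\C$ as a finite-dimensional irreducible $A$-module. The hypothesis says $A(\widetilde V_k(\g))$ has no finite-dimensional irreducible module other than $\C$; since $A(\widetilde V_k(\g))$ is a quotient of $U(\g)$ and $\g$ is basic simple (so $U(\g)$ has plenty of finite-dimensional irreducibles, all killed in the passage to $A(\widetilde V_k(\g))$ except those compatible with the level), the kernel of $A(\widetilde V_k(\g))\to A(V_k(\g))$ must act by zero on every finite-dimensional irreducible module of $A(\widetilde V_k(\g))$; as $\C$ is the only one and it already factors through $A(V_k(\g))$, a standard argument (the Jacobson radical of a finite-dimensional-module-rich quotient of $U(\g)$, or simply that $V^k(\g)$ and hence $\widetilde V_k(\g)$ is generated by $\g=\widetilde V_k(\g)_1$) forces $\widetilde V_k(\g)=V_k(\g)$.

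The main obstacle I anticipate is the last implication: knowing that two quotients $\widetilde V_k(\g)$ and $V_k(\g)$ of $V^k(\g)$ have ``the same'' finite-dimensional irreducible Zhu-modules does not \emph{a priori} force the vertex algebras to coincide, because a vertex algebra is not determined by its Zhu algebra. One must use that $\widetilde V_k(\g)$ is a quotient of $V^k(\g)$ and that $KL^k$ contains all the relevant subquotients — i.e. that every nonzero graded ideal of $\widetilde V_k(\g)$ lies in $KL^k$ and therefore has an irreducible top giving a nontrivial finite-dimensional Zhu-module. Making that step rigorous (that a nonzero maximal proper ideal $J\subset\widetilde V_k(\g)$ yields an irreducible $\widehat\g$-subquotient with nonzero highest weight, which is then a nontrivial finite-dimensional irreducible $A(\widetilde V_k(\g))$-module via Proposition~\ref{zhu-int}) is where the real content sits; everything else is formal.
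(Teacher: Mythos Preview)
Your overall strategy---assume $\widetilde V_k(\g)$ is not simple, take a nonzero proper graded ideal $I$, and extract from its lowest component a nontrivial finite-dimensional irreducible $A(\widetilde V_k(\g))$-module---matches the paper's. But you have correctly flagged, and not closed, the one substantive step: why does the lowest component $I(n_0)$ yield an irreducible $A(\widetilde V_k(\g))$-module \emph{other than} $\C$? Your claim ``$\bar\lambda\ne 0$ because the conformal weight is positive'' is the right intuition but is left unjustified.

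The paper closes this with a one-line Casimir computation. Since $I\ne\widetilde V_k(\g)$, the vacuum is not in $I$, so the lowest $L_\g(0)$-degree $n_0$ of $I$ is strictly positive. The Casimir element $C_\g$ of $\g$ acts on $I(n_0)$ as the scalar $2(k+h^\vee)n_0\ne 0$ (recall $k\ne -h^\vee$ is a standing assumption). On the other hand, $I(n_0)$ is a finite-dimensional $A(\widetilde V_k(\g))$-module, and by hypothesis every irreducible subquotient of it is the trivial $\g$-module, on which $C_\g$ acts by $0$. This contradiction finishes the proof. Note that there is no need to pass to irreducible $\widehat\g$-subquotients of $I$ or to invoke Proposition~\ref{zhu-int}; one works directly with the top component $I(n_0)$ as an $A(\widetilde V_k(\g))$-module.

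Your ``cleanest route'' paragraph---comparing $A(\widetilde V_k(\g))$ with $A(V_k(\g))$ and appealing to Jacobson radicals---does not give a proof. Two quotients of $V^k(\g)$ whose Zhu algebras have the same finite-dimensional irreducibles need not coincide; you still have to exhibit a nontrivial Zhu-module from a putative proper ideal, and that brings you right back to the Casimir step above. I would drop that paragraph entirely.
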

 \begin{proof}
Assume that $\widetilde V_k(\g)$ is not simple. Then it contains a   non-zero graded ideal $I \ne \widetilde V_k(\g)$ with respect to $L_{\g}(0)$:
$$ I = \bigoplus_{n \in  {\Z}_{\ge 0}  }  I(n+n_0), \quad L_{\g} (0) \vert I(r)  = r \mbox{Id}, \quad I(n_0) \ne 0.$$
Since $I \ne \widetilde V_k(\g)$, we have that $n_0 >0$, otherwise ${\bf 1} \in I$.

We can consider $I(n_0)$ as a finite-dimensional module for $\g$ and for the Zhu algebra $A(\widetilde V_k(\g))$.

Since the Casimir element $C_{\g}$  of $\g$ acts  on $I(n_0)$ as the  non-zero constant    $ 2 (k+h ^{\vee}) n_0 $, we conclude that $C_{\g}$ acts  by the same constant on any irreducible $\g$--subquotient of $I(n_0)$. But  any irreducible subquotient of $I(n_0)$ is  an irreducible finite--dimensional $A(\widetilde V_k(\g))$--module, and therefore it is   trivial.
This implies that $C_{\g}$ acts non-trivially on a trivial $\g$--module, a contradiction.
 \end{proof}
 Take the  Chevalley generators  $e_i, f_i, h_i$, $i=0, \dots, \ell$, of the Kac--Moody Lie algebra $\widehat{\g}$  such that $e_i, f_i, h_i$, $i=1, \dots, \ell$,  are the Chevalley generators  of $\g$.
 Let $\sigma$ be the  Chevalley antiautomorphism of $\widehat{\g}$ defined  by
  $$ e_i  \mapsto  f_i  , \ \ f_i   \mapsto    e_i , \ h_i  \mapsto  h_i,\ \  d\mapsto d  \quad (i = 0, \dots, \ell).$$

 Assume that $M$ is  from  the category ${\mathcal O}$ of non-critical  level $k$. Then  $M$  admits the decomposition into  weight spaces
 $ M = \bigoplus_{\mu \in \Omega(M)} M_{\mu}$, where $\Omega(M)$ is the set of weights of $M$ and $\dim M_{\mu} < \infty$ for every $\mu \in \Omega(M)$. For a finite-dimensional vector spaces $U$, let $U^*$ denote its dual space.
  Then we  have  the    contravariant  functor $M \mapsto M^{\sigma} $   \cite{DGK} acting on modules  from  the category ${\mathcal O}$.
  Here $M^{\sigma} =  \bigoplus_{\mu \in \Omega(M)} M_{\mu} ^ *$ is the $\widehat{\g}$--module uniquely determined by
  $$  \langle y  w', w \rangle = \langle w', \sigma (y) w \rangle, \quad y  \in \widehat{\g}, \  w' \in M^{\sigma}, \ w \in M. $$
  It is easy to see that $M$ admits the decomposition
 \bea  M = \bigoplus_{\alpha \in {\C} } M (\alpha), \quad L_{\g} (0) \vert M(\alpha) \equiv \alpha\, \mbox{Id}
\label{wt-dec-l0} \eea
  such that :
  \begin{itemize}
  \item for any $\alpha \in {\C}$ we have $M(\alpha -n) = 0$ for $n \in {\Z}$ sufficiently large;
  \item  for any $\mu \in \Omega(M)$ there exist $\alpha \in {\C}$ such that  $M_{\mu} \subset M(\alpha)$.
  \end{itemize}
  \begin{prop}\label{fd} Assume that  a module $M$ is  in the category $\mathcal O^k$. Then $M$ is in the category $KL^k$ if and  only if $M$ is $\g$-locally finite.
\end{prop}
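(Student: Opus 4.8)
The plan is to prove the equivalence in Proposition \ref{fd} by unwinding the definitions and using the structure of category $\mathcal O^k$ together with the decomposition \eqref{wt-dec-l0}. Recall that $M \in KL^k$ means $M \in \mathcal O^k$ and $M = \bigoplus_{\alpha \in \C} M(\alpha)$ with each $L_\g(0)$-eigenspace $M(\alpha)$ finite-dimensional. So, since membership in $\mathcal O^k$ is assumed on both sides, the content of the statement is: for $M \in \mathcal O^k$, the eigenspaces $M(\alpha)$ are all finite-dimensional if and only if $M$ is $\g$-locally finite (i.e. every $v \in M$ lies in a finite-dimensional $\g$-submodule, where $\g \subset \widehat\g$ is the subalgebra generated by the Chevalley generators $e_i,f_i,h_i$ for $i=1,\dots,\ell$).

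First I would prove the forward direction. Assume all $M(\alpha)$ are finite-dimensional. The key observation is that $\g = \g \otimes t^0 \subset \widehat\g$ preserves each $L_\g(0)$-eigenspace: indeed $[L_\g(0), x \otimes t^0] = 0$ for $x \in \g$, because $L_\g(0)$ measures the $t$-degree (shifted) and $x \otimes t^0$ has $t$-degree zero. Hence each $M(\alpha)$ is a $\g$-submodule of $M$, and it is finite-dimensional by hypothesis. Given any $v \in M$, decompose $v = \sum_i v_i$ with $v_i \in M(\alpha_i)$ for finitely many $\alpha_i$; then $v$ lies in the finite-dimensional $\g$-submodule $\bigoplus_i M(\alpha_i)$. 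Thus $M$ is $\g$-locally finite.

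Next I would prove the converse. Assume $M \in \mathcal O^k$ is $\g$-locally finite; I must show each $M(\alpha)$ is finite-dimensional. Fix $\alpha \in \C$. From the first bulleted property following \eqref{wt-dec-l0}, there are only finitely many $\widehat\h$-weights $\mu \in \Omega(M)$ with $M_\mu \subseteq M(\alpha)$ that sit at ``top'' levels; more precisely, the weights occurring in $M(\alpha)$ all have the form $\mu = \lambda - \beta$ where $\lambda$ ranges over a finite set of maximal weights and $\beta$ is a sum of positive roots with a fixed ``$\delta$-component'' pinned down by $\alpha$. The decisive point is that $L_\g(0)$ acts on the $\widehat\h$-weight space $M_\mu$ by a scalar depending only on $\mu$ (namely via the Sugawara formula), so $M(\alpha) = \bigoplus_{\mu : c(\mu) = \alpha} M_\mu$ for an explicit function $c$. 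I would argue that only finitely many weights $\mu \in \Omega(M)$ satisfy $c(\mu) = \alpha$: using $\g$-local finiteness, $M$ is a (possibly infinite) direct sum / union of finite-dimensional $\g$-modules, so $\Omega(M)$ is contained in $\bigcup_j (\nu_j - Q_+^\g) \cap (\nu_j + Q_+^\g)$-type sets bounded in the finite root directions, while the Sugawara scalar $c(\mu)$ grows without bound as one moves down in the imaginary ($\delta$) direction; combined with the $\mathcal O^k$ condition (finitely many maximal weights, finite-dimensional weight spaces) this forces $\{\mu : c(\mu) = \alpha\}$ to be finite, and each $M_\mu$ is finite-dimensional, so $M(\alpha)$ is finite-dimensional.

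The main obstacle I expect is the converse direction, specifically making rigorous the claim that $\g$-local finiteness plus the $\mathcal O^k$ hypothesis confines each $L_\g(0)$-eigenspace to finitely many $\widehat\h$-weights. One has to control the interplay between boundedness in the finite-type directions (coming from finite-dimensional $\g$-submodules) and the monotonic growth of the conformal weight in the imaginary direction, and rule out an infinite ``horizontal'' family of weights all having the same conformal weight. I would handle this by writing a generic weight of a finite-dimensional $\g$-constituent as $\nu$, noting $(\nu|\nu + 2\rho)$ is bounded on each constituent, and checking via the explicit Sugawara eigenvalue $c(\mu) = \frac{(\bar\mu | \bar\mu + 2\rho)}{2(k+h^\vee)} + (\text{level-grading term})$ that fixing $c(\mu) = \alpha$ together with the finitely-many-maximal-weights condition leaves only finitely many $\mu$; the finite-dimensionality of $M(\alpha)$ then follows since each $M_\mu$ is finite-dimensional in $\mathcal O^k$.
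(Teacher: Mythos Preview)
Your forward direction is correct and coincides with the paper's argument. The converse, however, has a genuine gap. The formula you propose, $c(\mu)=\tfrac{(\bar\mu\,|\,\bar\mu+2\rho)}{2(k+h^\vee)}+(\text{level term})$, does \emph{not} give the $L_\g(0)$-eigenvalue on an arbitrary weight space $M_\mu$: that Sugawara expression is the conformal weight of a \emph{highest} weight vector only. In the paper's convention $d$ acts as $-L_\g(0)$, so on $M_\mu$ one simply has $c(\mu)=-\mu(d)$, with no quadratic term in $\bar\mu$. Thus fixing $c(\mu)=\alpha$ merely pins down the coefficient $n_0$ of $\alpha_0$ in $\mu=\lambda^{(i)}-\sum_j n_j\alpha_j$ and leaves $n_1,\dots,n_\ell$ unconstrained. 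Your further claim that $\g$-local finiteness makes $\Omega(M)$ ``bounded in the finite root directions'' is also false as stated (already $M=V^k(\g)$ is $\g$-locally finite with $\h$-weights filling out arbitrarily large multiples of the root lattice), so the proposed mechanism for cutting down to finitely many $\mu$ does not work.

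The paper proceeds differently: it first observes that a $\g$-locally finite \emph{highest weight} module has finite-dimensional $L_\g(0)$-eigenspaces (clear, e.g.\ because it is a quotient of the parabolically induced module $U(\g\otimes t^{-1}\C[t^{-1}])\otimes V(\bar\lambda)$), and then for general $M$ invokes the local composition series of \cite[Proposition~3.1]{DGK} to filter $M$ by highest weight subquotients $\widetilde L(\lambda_j)$. Using the $\mathcal O^k$ axioms one shows that only finitely many of the lowest conformal weights $h_{\lambda_j}$ can lie at or below a fixed $\alpha$, so $M(\alpha)\subset M_{j_0}$ for some finite stage $j_0$, and $M_{j_0}$ has finite-dimensional $L_\g(0)$-eigenspaces by the highest weight case. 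If you prefer a direct argument, the correct version is: $M(\alpha)$ is a $\g$-submodule which is $\g$-locally finite, has finite-dimensional $\h$-weight spaces, and has $\h$-weights bounded above by finitely many $\gamma_i=\bar\lambda^{(i)}+n_0^{(i)}\theta$; when $\g$ is a semisimple Lie algebra this forces $M(\alpha)$ to be a finite sum of $V(\nu)$'s with $\nu\in P_+$, $\nu\le\gamma_i$, each with finite multiplicity. The paper's filtration route has the advantage of working uniformly for basic Lie superalgebras, where complete reducibility of finite-dimensional $\g$-modules is unavailable.
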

\begin{proof}
If $M$ is in $KL^k$ then it admits a decomposition as in  \eqref{wt-dec-l0}. Since the spaces $M(\a)$ are $\g$--stable and finite-dimensional, $M$ is $\g$--locally finite.

Let us prove the converse. If $M$ is a highest weight module which is $\g$--locally finite, then clearly all eigenspaces for $L_{\g}(0)$ are finite-dimensional.
Assume now that $M$ is an arbitrary $\g$--locally finite module in the category ${\mathcal O}^k$. Take $\alpha \in {\C}$ such that $M(\alpha) \ne \{0\}$.  Then    from  \cite[Proposition 3.1]{DGK}
 we see  that $M$   has an increasing filtration (possibly infinite)
    \bea  \{0\} = M_0 \subset M_1 \subset  \cdots  \subset M  \label{filt-1} \eea
       such that for every $j \in {\Z}_{>0}$, $M_j / M_{j-1} \cong \widetilde L(\lambda_j) $ is a  highest weight  $V^k(\g)$--module  with highest weight $\lambda_j$,  which is $\g$--locally finite.  Let $h_{\lambda_j}$ denotes the lowest conformal weight of $\widetilde L(\lambda_j)$. Since the factors $M_i/M_{i-1}$ ($i\le j$) of $M_j$ are  highest weight modules, their $L_\g(0)$--eigenspaces are finite-dimensional. This implies that the $L_\g(0)$--eigenspaces of  $M_j$ is  finite-dimensional. By using the properties of the category $\mathcal O$ one sees the following:
 \begin{itemize}
 \item There exists a finite subset $\{ d_1, \cdots, d_s\} \subset {\C}$ such that
 $ \alpha  \in \bigcup_{i=1} ^s (d_i + {\Z}_{\ge 0})$.
 \item For $d \in {\C}$ there exist only finitely many subquotients $\widetilde L(\lambda_j)$ in  (\ref{filt-1}) such that $ h_{\lambda_j} = d$.
 \end{itemize}
 This implies that there is $j_0 \in {\Z}_{>0}$ such that    $\alpha < h_{\lambda_j}$ for $j \ge j_0$. Therefore $M(\alpha) \subset M_{j_0} $. This proves that $M(\alpha)$ is finite-dimensional.
\end{proof}

\begin{rem}\label{rm} We will use several times the following fact, which is a consequence of the previous proposition: for any $k\notin \mathbb Z_{\geq 0}$ and any irreducible highest weight module $L(\l)$ in the category $KL^k$, one has  $\l(\a_0^\vee)\notin  \mathbb Z_{\geq 0}$.\end{rem}
Since $\sigma(L_\g(0))=L_\g(0)$,  if $M$ is in the category $KL^k$, then $M ^{\sigma}$ is also in the category $KL^k$.
  The next result shows that this functor acts on the category   $KL_k$. In the proof we find an explicit  relation of $M^{\sigma}$ with the contragradient modules, defined for ordinary modules for  vertex operator algebras \cite{FHL}.
 \begin{lem} \label{chev}
 \item [(1)]   Assume that  $M$ is  a $V_k(\g)$--module  in  the category $\mathcal O$. Then $M ^{\sigma} $  is also a $V_k(\g)$--module in the category $\mathcal O$.
  \item[(2)] Assume that  $M$ is  a  $V_k(\g)$--module  in  the category $KL_k$. Then $M ^{\sigma} $ is  also in  $KL_k$.
 \end{lem}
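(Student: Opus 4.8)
The plan is to realize $M^\sigma$ as a twist, by an automorphism of the vertex algebra $V_k(\g)$ coming from the Chevalley involution of $\g$, of the contragredient module of $M$ in the sense of \cite{FHL}. Since $\sigma$ fixes $\h$ and $L_{\g}(0)$, the functor $M\mapsto M^\sigma$ preserves $\mathcal O$ and $KL^k$ (as already observed before the statement); so in both (1) and (2) it remains only to prove that the maximal ideal $\mathcal I$ of $V^k(\g)$ annihilates $M^\sigma$, given that it annihilates $M$.

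First I would write down the two twisted--dual constructions living on the space $\bigoplus_\mu M_\mu^*$ (restricted dual with respect to the finite-dimensional $\widehat\h$-weight spaces). On one hand, using a compatible Chevalley basis, the unique extension of $\sigma$ to $\widehat\g$ is given by $\sigma(x\otimes t^m)=\sigma_{\g}(x)\otimes t^{-m}$, $\sigma(K)=K$, $\sigma(d)=d$, where $\sigma_{\g}$ is the Chevalley anti-involution of $\g$; this is the action defining $M^\sigma$. On the other hand, a direct computation with $v=x(-1)\vac$, using $L_{\g}(1)(x(-1)\vac)=0$ and $L_{\g}(0)(x(-1)\vac)=x(-1)\vac$ in the \cite{FHL} formula, shows that the contragredient $M'$ carries the affine action $\langle(x\otimes t^m)w',w\rangle=-\langle w',(x\otimes t^{-m})w\rangle$, with $K$ and $d$ unchanged. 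Hence $M^\sigma$ and $M'$ are built on the same space, and their $\widehat\g$-module structures differ by the automorphism of $\widehat\g$ that is the identity on the grading and, on each $\g\otimes t^m$, is the map $x\mapsto -\sigma_{\g}(x)$ on the $\g$-factor; this map is precisely the Chevalley involution $\omega_{\g}$ of $\g$. That is, $M^\sigma\cong (M')^{\widehat\omega}$, where $\widehat\omega(x\otimes t^m)=\omega_{\g}(x)\otimes t^m$, $\widehat\omega(K)=K$, $\widehat\omega(d)=d$.

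To conclude I would use two facts. First, $\mathcal I$ annihilates $M'$: the contragredient vertex operators are transposes of the operators $Y_M(e^{zL_{\g}(1)}(-z^{-2})^{L_{\g}(0)}v,z^{-1})$, and since $e^{zL_{\g}(1)}(-z^{-2})^{L_{\g}(0)}$ is built from the modes of the conformal vector $\omega_{\g}\in V^k(\g)$ it maps the (mode-stable) ideal $\mathcal I$ into $\mathcal I[[z,z^{-1}]]$; thus if $v\in\mathcal I$ then $Y_M(e^{zL_{\g}(1)}(-z^{-2})^{L_{\g}(0)}v,z^{-1})=0$, whence $Y_{M'}(v,z)=0$. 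For $M$ in $KL_k$ this is exactly the statement of \cite{FHL} that the contragredient of a $V_k(\g)$-module is a $V_k(\g)$-module. Second, $\omega_{\g}$ preserves the normalized invariant form of $\g$, so $\widehat\omega$ fixes $K$ and $d$ and lifts to a vertex algebra automorphism $g_0$ of $V^k(\g)$ fixing $\vac$ and $\omega_{\g}$; being an automorphism it stabilizes $\mathcal I$ and descends to $V_k(\g)$, so the twist $(M')^{g_0}$ of the $V_k(\g)$-module $M'$ is a $V_k(\g)$-module whose underlying $\widehat\g$-module is $(M')^{\widehat\omega}\cong M^\sigma$. This gives (2); (1) follows by the same chain of reasoning.

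The step that needs care is this last one in the setting of (1): for $M\in\mathcal O$ the $L_{\g}(0)$-eigenspaces need not be finite-dimensional, so the \cite{FHL} contragredient is not literally available. The remedy is to run the construction with the graded dual taken relative to the (always finite-dimensional) $\widehat\h$-weight grading: one checks that $\bigoplus_\mu M_\mu^*$, with the $\sigma$-twisted action, is a restricted $\widehat\g$-module of level $k$ — here one uses that the weights of $M$ are bounded above, so that $x\otimes t^m$ acts locally nilpotently for $m\gg0$ — hence a $V^k(\g)$-module; the \cite{FHL} identities, being local in the grading, then hold mode by mode, and the vanishing of the modes of $\mathcal I$ on $M$ forces their vanishing on $M^\sigma$. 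This bookkeeping is the only genuinely technical point; everything else is formal. In writing up (2) I would also record explicitly the resulting isomorphism of $M^\sigma$ with a twist of the usual \cite{FHL} contragredient, since that is the form in which the lemma is used elsewhere in the paper.
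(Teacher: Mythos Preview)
Your approach is essentially the same as the paper's: realize $M^\sigma$ as the twist of the FHL contragredient $M^c$ by the automorphism of $V_k(\g)$ induced by the Chevalley involution $\omega_\g$ of $\g$ (the paper calls it $h$), and use uniqueness of the maximal ideal to see that this automorphism descends to $V_k(\g)$. Your treatment of part~(1) is in fact a bit more careful than the paper's, which simply asserts that the FHL construction applies to the weight-space restricted dual without further comment; your remark that one must check restrictedness and verify the FHL identities mode by mode on the $\widehat\h$-graded dual is exactly the point that needs to be filled in there.
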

 \begin{proof}
Assume  that $M$ is  a $V_k(\g)$--module in the category $\mathcal O$. Take the weight decomposition   $M  = \bigoplus_{\mu \in \Omega(M)} M_{\mu}$, and set  $M^{c}  = \bigoplus_{\mu \in \Omega(M)} M_{\mu} ^*$. By applying the same approach as in the construction of the contragredient module from   \cite[Section 5]{FHL},  we get
 a $V_k(\g)$--module $(M^c, Y_{M^c} (\cdot, z) )$, with  vertex operator map
\bea  \langle Y_{M^c} (v,z)  w', w \rangle =  \langle w ',    Y_M(   e^ { z L_{\g} (1)}  (- z^{-2} ) ^{L_{\g} (0)} v, z)   w\rangle,   \label{vo-map} \eea
  where $w' \in M^c$, $w \in M$.
The  ${\widehat {\g} }$--action on $M ^c$ is uniquely determined by
 $$ \langle  x(n)  w' ,  w \rangle =  - \langle w' , x(-n) w \rangle  \quad (x \in {\g} ). $$
As  a  vector space $M^c = M^{\sigma}$,  but we have different actions of $\widehat{\g}$. (Note that, in  general,  $M^c$ can be outside of the category $\mathcal O$.)

 Take the Lie algebra automorphism  $h  \in Aut (\g)$ such that $$ e_i  \mapsto - f_i  , \ \ f_i   \mapsto   - e_i , \ h_i  \mapsto -h_i  \quad (i = 1, \dots, \ell).$$  Then $h$ can be lifted to an automorphism of $V^k(\g)$. Since the maximal ideal of $V^k(\g)$ is  unique, then it is $h$--invariant, thus  $h$ is also an automorphism of $V_k(\g)$.
  Then we can define a $V_k(\g)$--module $( M^c _h,  Y_{M^c _h} (\cdot, z) )$  where
 $$  M^c _h := M^c, \quad  Y_{M^c _ h } (v, z) =  Y_{M^c } ( h  v, z). $$
On  $M^ c  _h $ we have
 $$ \langle  e_i  (n)  w' ,  w \rangle =   \langle w' , f_i (-n) w \rangle    $$
$$ \langle  f_i (n)  w' ,  w \rangle =   \langle w' , e_i (-n) w \rangle   $$
$$ \langle  h_i   (n)  w' ,  w \rangle =   \langle w' , h_i (-n) w \rangle   $$
where $i=1, \dots, \ell$.
This implies that $M^c _ h = M ^{\sigma}$.
 This proves the assertion (1).

 Assume now that $M$ is in the category $KL_k$. Then all $L_{\g}(0)$--eigenspaces  are finite-dimensional, thus $$ M^{c}  = \bigoplus_{\mu \in \Omega(M)} M_{\mu} ^* = \bigoplus_{\alpha \in {\C} } M(\alpha) ^*.$$
 This implies the   $V_k(\g)$--module $(M^c, Y_{M^c} (\cdot, z) )$ coincides  with    the contragredient  module  \cite{FHL},   realized on the restricted dual space  $\bigoplus_{\alpha \in {\C} } M(\alpha) ^*$, with the vertex operator map (\ref{vo-map}).
  Since  the $L_{\g}(0)$--eigenspaces of $M^c$ are finite-dimensional, we conclude that $M^c$ and $M^{\sigma} = M^c _h$  are $V_k(\g)$--modules in $KL_k$. Claim (2) follows.
 \end{proof}

\section{Constructions of  vertex algebras  with one irreducible module in $KL_k$ via collapsing levels}

By \cite{AKMPP-JA}, if $k$ is a collapsing level, then either   $W_k(\g,\theta) = \C$, $W_k(\g,\theta)=M(1)$, or $W_k(\g,\theta)=V_{k'}(\mathfrak a)$ for a unique 
simple component $\mathfrak a$ of $\g^\natural$. Here the level $k'$ is computed with respect to the invariant bilinear form of $\mathfrak a$ normalized so that the minimal root has squared length $2$. For $\mathfrak a= sl(m|n)$, $m\ge2$, the minimal root is always chosen to be the lowest root of $sl(m)$. For $\mathfrak a=osp(m|n)$ we write $spo(n|m)$ vs. $osp(m|n)$ to specify the choice of the minimal root. In all other cases the minimal root of $\mathfrak a$ is unique.

To simplify notation define $V_{k'}(\g^\natural)$ to be as follows: 
$$V_{k'}(\g^\natural)=\begin{cases}\C &\text{if $W_k(\g,\theta) = \C$; in this case we set $k'=0$;}\\
M(1) &\text{if $W_k(\g,\theta) = M(1)$; in this case we set $k'=1$;}\\
V_{k'}(\mathfrak a)&\text{otherwise.}\end{cases}$$

In Table 5 we summarize all the relevant data.

\begin{table}\centerline{Table 5}
	
\vskip 5pt
\noindent {\sl Values of $k$ and $k'$.}
\vskip 5pt

\centerline{\renewcommand{\arraystretch}{1.5}\begin{tabular}{c|c|c|c}
$\g$&$V_{k'}(\g^\natural)$&$k$&$k'$\\
\hline
$sl(m|n)$, $m\neq n, m>3, m-2\ne n$&$V_{k'}(sl(m-2|n)))$&$\frac{n-m}{2}$&$\frac{n-m+2}{2}$\\ 
\hline
$sl(3|n)$, $n\ne3, n\ne 1, n\ne0$&$V_{k'}(sl(1|n)))$&$\frac{n-3}{2}$&$\frac{1-n}{2}$\\ 
\hline
$sl(3)$&$\C$&$-\frac{3}{2}$&$0$\\ 
\hline
$sl(2|n)$, $n\neq 2,n\ne1, n\ne 0$&$V_{k'}(sl(n)))$&$\frac{n-2}{2}$&$-\frac{n}{2}$\\ 
\hline
$sl(2|1)=spo(2|2)$&$\C$&$-\frac{1}{2}$&$0$\\ 
\hline
$sl(m|n)$, $m\neq n,n+1,n+2, m\ge2$&$M(1)$&$-1$&$1$\\ \hline
$psl(m|m),\,m\ge2$&$\C$& $-1$&$0$\\
\hline
$spo(n|m),m\ne n,n+2,n\ge 4$& $V_{k'}(spo(n-2|m))$ &$\frac{m-n-4}{4}$&$\frac{m-n-2}{4}$\\
\hline
$spo(2|m),m\ge  5$& $V_{k'}(so(m))$ &$\frac{m-6}{4}$&$\frac{4-
m}{2}$\\
\hline
$spo(2|3)$& $V_{k'}(sl(2))$ &$-\frac{3}{4}$&$1$\\
\hline
$spo(2|1)$& $\C$ &$-\frac{5}{4}$&$0$\\
\hline
$spo(n|m),m\ne n+1,n\ge 2$& $\C$ &$-1/2$&$0$\\
\hline
$osp(m|n),m\ne n,m\ne n+8,m\geq 7$&$V_{k'}(osp(m-4|n))$ &$\frac{n-m+4}{2}$&$\frac{8-m+n}{2}$
\\\hline
$osp(m|n),n\ne m,0;4\le m\le 6$&$V_{k'}(osp(m-4|n))$ &$\frac{n-m+4}{2}$&$\frac{m-n-8}{4}$
\\\hline
$osp(m|n),m\ne n+4,n+8;m\geq 4$&$ V_{k'}(sl(2))$ &$-2$ &$\frac{m-n-8}{2}$\\
\hline
$osp(n+8|n),n\geq 0$&$\C$ &$-2$ &$0$\\
\hline
$D(2,1;a)$&$V_{k'}(sl(2))$&$a$ &$-\frac{1+2a}{1+a}$\\
\hline
$D(2,1;a)$&$V_{k'}(sl(2))$&$-a-1$ &$-\frac{1+2a}{a}$\\
\hline
 $F(4)$&$V_{k'}(D(2,1;2))$ &$-1$& $\frac{1}{2}$\\
\hline
 $F(4)$&$\C$ &$-3/2$& $0$\\
\hline
$F(4)$&$V_{k'}(so(7))$&$\frac{2}{3}$ & $-2$\\\hline
$F(4)$&$\C$&$-\frac{2}{3}$ & $0$\\\hline
$E_6$&$V_{k'}(sl(6))$&$-4$ & $-1$\\\hline
$E_6$&$\C$&$-3$ & $0$\\\hline
$E_7$&$V_{k'}(so(12))$&$-6$ & $-2$\\\hline
$E_7$&$\C$&$-4$ & $0$\\\hline
$E_8$&$V_{k'}(E_7)$&$-10$ & $-4$\\\hline
$E_8$&$\C$&$-6$ & $0$\\\hline
$F_4$&$V_{k'}(sp(6))$&$-3$ & $-\frac{1}{2}$\\\hline
$F_4$&$\C$&$-5/2$ & $0$\\\hline
$G_2$&$V_{k'}(sl(2))$&$-\frac{4}{3}$&$1$\\\hline
$G_2$&$\C$&$-\frac{5}{3}$&$0$\\\hline
$G(3)$&$V_{k'}(G_2)$&$\frac{1}{2}$ & $-\frac{5}{3}$\\\hline
$G(3)$&$\C$&$-\frac{3}{4}$ & $0$\\\hline
$G(3)$&$V_{k'}(osp(3|2))$&$-\frac{2}{3}$ & $1$\\\hline
$G(3)$&$\C$&$-\frac{4}{3}$ & $0$
\end{tabular}}

\end{table}

Assume that $k\notin \mathbb Z_{\geq 0}$  and  that:
\begin{itemize}
\item[(1)] $k$  is a collapsing level for $\g$;
\item[(2)] $V_{k'}(\g ^{\natural} )$ is the unique  irreducible $V_{k'}(\g ^{\natural} )$--module in the category $KL_{k'}$.
\end{itemize}
Assume that $L (\widehat\L)$ is an irreducible $V_{k}(\g)$-module in the category $KL_{k}$. Set $\mu= \widehat \L_{|\h}$. By Proposition \ref{fd}   we have $\mu\in P_+$, hence, by \eqref{mumus}, the weight $\mu$ has the form $\mu = \mu ^{\natural} + \ell  \theta$ with $\ell  \in \frac{1}{2} {\Z}_{\ge 0}$, where
$ \mu^{\natural} = \mu _{|\mathfrak h^{\natural}}$.

Since $k \notin {\mathbb Z}_{\ge 0}$, by Theorem \ref{A}, $H_{\theta} (L (\widehat\L))$ is a non-trivial irreducible module for $W_k(\g,\theta)$. Since $L (\widehat\L)$ is a quotient of the Verma module $M(\widehat \L)$, then, by exactness of $H_\theta$,  $H_{\theta} (L (\widehat\L))$ is the quotient of a Verma module for $W_k(\g,\theta)=V_{k'}(\g ^{\natural} )$ hence it is an irreducible highest weight module. By \cite[(6.14)]{KW2} its highest weight as $\mathcal V_{k}(\g ^{\natural} )$-module is $\widehat \L^\natural$ with $\widehat \L^\natural(K)=k'$ and $\widehat \L^\natural_{|\h^\natural}=\mu^\natural$. Therefore
$$H_{\theta} (L (\widehat\Lambda)) =L_{\g ^{\natural} } (\widehat\L^\natural). $$
In particular  $H_{\theta} (L (\widehat\Lambda ))$ is in the category $KL_{k'}$.

Moreover, under the identification of the centralizer $\g^f$ of $f$ in $\g$ with $\g_0\oplus \g_{1/2}$ via $ad(f)$ (see Example 6.2 of \cite{KW2}),
 we get that $x$ acts on $H_{\theta} (L (\widehat\Lambda ))$ via $J^{\{f\}}_0$, and $J^{\{f\}}$ is the conformal vector of $W(k,\theta)$ (see the proof of Theorem 5.1 of \cite{KW2}). Since the level is collapsing we know, by Proposition 4.1 of \cite{AKMPP-JA}, that the conformal vector  of $W_k(\g,\theta)$ coincides with the Segal-Sugawara vector conformal  $\omega_{\g ^{\natural}}$  of  $V_{k'}(\g ^{\natural} )$ hence, by (6.14) of \cite{KW2} again, we obtain that the
 $( \omega_{\g ^{\natural}} )_0$  acts  on the lowest component of $H_{\theta} (L (\widehat\L))$  by $cI$ with
\bea c=\frac{ (\mu + 2 \rho, \mu)} {2 (k+h^{\vee})} - \mu (x). \label{lowest-conf-weight-W} \eea

Now condition (2) implies that
$\mu^{\natural} = 0$,
so $\mu = \ell \theta$ and
$$ \frac{ (\mu + 2 \rho, \mu)} {2 (k+h^{\vee})} - \mu (x) = \frac{ (\ell \theta  + 2 \rho, \ell \theta )} {2 (k+h^{\vee})} -\ell = 0. $$
By using formula \eqref{rhotheta},  we get
\begin{equation}\label{actionw}
 \frac{ 2 \ell ^2 +(2 h^{\vee} -2)\ell }{2 (k+ h^{\vee})} - \ell = \frac{  \ell ^2   -(k+1)\ell }{ k+ h^{\vee}}= 0.
 \end{equation}
\begin{itemize}
\item Consider first the case $k = - h^{\vee} /2+1$ (this holds for $\g = D_{2 n}$, $n \ge 2$ and $\g =osp( n + 4 m + 8  | n)$, $n \ge 0$).
Then  \eqref{actionw} gives that
\bea \frac{   2 \ell ^2 +  (h^{\vee} -4 )\ell  }{ h^{\vee} +2 } = 0. \label{uv-7} \eea
We get $\ell =0$ or  $ 2 \ell + h^{\vee} -4 = 0$.

\item Next we consider  the case $k = - h^{\vee} /6 -1$. We get
\bea   \frac{  6  \ell ^2 + h^{\vee} \ell  }{5  h^{\vee} -6 } = 0.  \label{uv-8} \eea
We conclude that $\ell = 0$ or $  \ell  = - \frac{ h^{\vee} }{6}$.
\end{itemize}
By using the  above analysis and properties of Hamiltonian reduction, we get the following lemma,
which  extends a  result of \cite{AM} for Lie algebras to the super case.
\begin{lem} \label{lemma-pomoc-1} Assume that $k=-\frac{h^\vee}{6}-1$ and $\g$ is one of the Lie algebras of exceptional Deligne's series $A_2$, $G_2$, $D_4$, $F_4$, $E_6$, $E_7$, $E_8$, or $\g=psl(m|m)$ ($m\ge2$), $osp(n+8|n)$ ($n\ge2$), $spo(2|1)$, $F(4)$, $G(3)$ (for both choices of $\theta$).

Assume that  $L(\lambda)$  is a $V_k(\g)$--module in the category $\mathcal O$. Then one of the following condition holds:
\begin{itemize}
\item[(1)] $\lambda (\alpha_0 ^{\vee} ) \in {\Z}_{\ge 0}$;
\item [(2)] $\bar \lambda$  is either  $0$ or $  \frac{-h^{\vee}}{6} \theta $, where $\bar \lambda$ is the restriction of $\lambda$ to  $\h$.
\end{itemize}
\end{lem}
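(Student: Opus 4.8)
\emph{Proof proposal.} The plan is to rerun the argument given just before the statement, but to exploit the fact that for the superalgebras and the level in question the minimal $\mathcal W$-algebra collapses all the way down to $\C$; this is exactly what will let me drop the hypothesis $L(\lambda)\in KL_k$ that was in force there. First I would dispose of alternative (1): if $\lambda(\alpha_0^\vee)\in\Z_{\ge0}$ there is nothing to prove, so I assume $\lambda(\alpha_0^\vee)\notin\Z_{\ge0}$. Then by Theorem \ref{A}(2) the module $H_\theta(L(\lambda))$ is nonzero and is an irreducible highest weight module over $W^k(\g,\theta)$; since $L(\lambda)$ is a $V_k(\g)$-module, $H_\theta(L(\lambda))$ is in fact a module over the simple quotient $W_k(\g,\theta)$ (as in the discussion preceding the lemma, following \cite{Araduke}).

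The next step is purely a bookkeeping one: using Theorem \ref{TT} together with \eqref{ki}, I would check case by case — reading off Table 4 and Table 5 — that for each superalgebra in the list the value $k=-\tfrac{h^\vee}{6}-1$ is a collapsing level and that the index set $I^*$ is empty, so that $W_k(\g,\theta)=\C$. Since $\C$ has no nonzero element of positive conformal weight, the images in it of the conformal vector $\omega$ of $W^k(\g,\theta)$ and of all the currents $J^{\{a\}}$, $a\in\g^\natural$, vanish, and hence on the $W_k(\g,\theta)$-module $H_\theta(L(\lambda))$ one has $(\omega_{\g^\natural})_0=0$ and $J^{\{a\}}_0=0$. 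The first vanishing forces, via \eqref{lowest-conf-weight-W}, that $c=0$, i.e.
\[
\frac{(\mu+2\rho,\mu)}{2(k+h^\vee)}-\mu(x)=0,\qquad \mu:=\lambda_{|\h};
\]
the second, applied to $a\in\h^\natural$ and combined with the identification of the $\h^\natural$-part of the highest weight of $H_\theta(L(\lambda))$ with $\mu^\natural$ via \cite[(6.14)]{KW2} (exactly as in the discussion above), gives $\mu^\natural=0$, so $\mu=\ell\theta$ for some $\ell\in\C$. Substituting $\mu=\ell\theta$ into the displayed identity and using \eqref{normalized} and \eqref{rhotheta} yields \eqref{actionw}, and then setting $k=-\tfrac{h^\vee}{6}-1$ turns this into \eqref{uv-8}; as $5h^\vee-6=6(k+h^\vee)\neq0$, I conclude $\ell=0$ or $\ell=-\tfrac{h^\vee}{6}$, so that $\bar\lambda=\mu$ is $0$ or $-\tfrac{h^\vee}{6}\theta$, which is alternative (2).

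The main obstacle is the removal of the $KL_k$ assumption. In the discussion preceding the lemma, $\mu^\natural=0$ was obtained by first using $L(\widehat\Lambda)\in KL_k$ to force $\mu\in P_+$ and then invoking the uniqueness condition (2) there; since here I only know $L(\lambda)\in\mathcal O$, that route is unavailable, and the point to verify carefully is that once $W_k(\g,\theta)=\C$ both $c=0$ and $\mu^\natural=0$ are forced for the elementary reason above, with no finiteness assumption on the top of $L(\lambda)$. Relatedly, I should make sure that the fact that $H_\theta(L(\lambda))$ is a highest weight $W_k(\g,\theta)$-module, and the weight and conformal-weight formulas of \cite{KW2} and \cite{Araduke}, hold for an arbitrary $L(\lambda)$ in $\mathcal O^k$ with $\lambda(\alpha_0^\vee)\notin\Z_{\ge0}$ — not merely for modules in $KL_k$. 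By contrast, the case-by-case check that $k=-\tfrac{h^\vee}{6}-1$ is collapsing with $I^*=\emptyset$ is immediate from Tables 4 and 5 and presents no real difficulty.
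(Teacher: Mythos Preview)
Your proposal is correct and follows essentially the same approach as the paper: assume $\lambda(\alpha_0^\vee)\notin\Z_{\ge0}$, apply Theorem~\ref{A} to get a nonzero irreducible $W_k(\g,\theta)$-module, observe that for the listed $\g$ and $k=-h^\vee/6-1$ one has $W_k(\g,\theta)=\C$, and then run the computation preceding the lemma to land on \eqref{uv-8}. Your explicit handling of why the $KL_k$ hypothesis from the preceding discussion can be dropped---namely that $W_k(\g,\theta)=\C$ forces both $\mu^\natural=0$ and $c=0$ directly, without needing $\mu\in P_+$---is a useful clarification that the paper leaves implicit in its appeal to ``the discussion that precedes the Lemma.''
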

\begin{proof}
By Theorem \ref{A}, if  $L(\lambda)$  is a  $V_k(\g)$--module for which  $\lambda (\alpha_0 ^{\vee} ) \notin {\Z}_{\ge 0}$, then $ H_{\theta} (L(\lambda)) $  is an irreducible  $W_k(\g, \theta) =H_{\theta} (V_k(\g))$--module.  The conditions on $\g$ exactly correspond to the cases when $W_k(\g, \theta)$ is one-dimensional (cf. \cite{AKMPP-JA}, \cite{AM}), so the discussion that precedes the Lemma and  relation (\ref{uv-8}) imply that
$\bar \lambda$ is as in (2).
 \end{proof}

Lemma  \ref{lemma-pomoc-1}  implies:
\begin{thm} \label{thm-classification-unique}
Assume  that the level $k$ and the Lie superalgebra $\g$ satisfy one of the following conditions:
\begin{itemize}
\item[(1)]   $k=-\frac{h^\vee}{6}-1$ and $\g$ is one of the Lie algebras of exceptional Deligne's series $A_2$, $G_2$, $D_4$, $F_4$, $E_6$, $E_7$, $E_8$, or $\g=psl(m|m)$ ($m\ge2$), $osp(n+8|n)$ ($n\ge2$), $spo(2|1)$, $F(4)$, $G(3)$ (for both choices of $\theta$);
\item[(2)]  $k =-h^{\vee} / 2 + 1$ and $\g =osp(n + 4 m + 8 \vert n)$, $n \ge 2, m \ge 0 $.
\item[(3)] $k =-h^{\vee} / 2 + 1$ and $\g = D_{2 m}$, $m \ge 2$.
\item[(4)] $k = -10$ and $\g = E_8$.
\end{itemize}
Then $V_k(\g)$ is the unique irreducible $V_k(\g)$--module in the category $KL_k$.
\end{thm}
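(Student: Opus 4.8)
The plan is to reduce everything to identifying, among the irreducible objects of $KL_k$, only the vertex algebra $V_k(\g)$ itself. First I would observe that $V_k(\g)=L(k\L_0)$ does belong to $KL_k$ and is irreducible there, while any irreducible object $M$ of $KL_k$ lies in $\mathcal O^k$ and, being irreducible in category $\mathcal O$ of noncritical level, is a highest weight module $L(\widehat\L)$. By Proposition \ref{fd} its weight $\mu:=\widehat\L_{|\h}$ lies in $P_+$, so by \eqref{mumus} one has $\mu=\mu^\natural+\ell\theta$ with $\mu^\natural=\mu_{|\h^\natural}$ and $\ell\in\half\Z_{\ge0}$; moreover in all four cases $k\notin\Z_{\ge0}$, so Remark \ref{rm} gives $\widehat\L(\a_0^\vee)\notin\Z_{\ge0}$. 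Hence it suffices to prove that necessarily $\mu=0$, i.e. $\widehat\L=k\L_0$.

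For (1) this is immediate from Lemma \ref{lemma-pomoc-1}: since $\widehat\L(\a_0^\vee)\notin\Z_{\ge0}$, alternative (1) there is excluded, so $\mu$ is $0$ or $-\tfrac{h^\vee}{6}\theta$. To rule out the second possibility I would use the elementary constraint just recalled: $\mu\in P_+$ would force $\ell=-\tfrac{h^\vee}{6}\in\half\Z_{\ge0}$, i.e. $-\tfrac{h^\vee}{3}\in\Z_{\ge0}$, which never holds for the algebras in (1) --- for the Lie algebras of Deligne's series and for $osp(n+8|n)$ one has $h^\vee>0$, and for $spo(2|1)$, $F(4)$ and $G(3)$ one checks the few values of $h^\vee$ in Tables 1--3 directly (e.g. $h^\vee=-2$ for $(F(4),\g^\natural=so(7))$ gives $-h^\vee/3=\tfrac23\notin\Z$). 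Hence $\mu=0$.

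For (2), (3) and (4) the plan is to verify the two hypotheses imposed at the start of this section for the pair $(\g,k)$ and then quote \eqref{actionw}. Hypothesis (1) is read off Table 4. For hypothesis (2) I would argue by reduction to case (1): by Table 5 the collapse $W_k(\g,\theta)$ is in each case a single simple affine vertex algebra $V_{k'}(\mathfrak a)$ --- with $(\mathfrak a,k')=(E_7,-4)$ in case (4); $(\mathfrak a,k')=(D_{2m-2},\,4-2m)$ in case (3); and $(\mathfrak a,k')=(osp(n+4(m-1)+8\,|\,n),\,-2m)$ in case (2), where in cases (3) and (2) the $sl(2)$ summand of $\g^\natural$ has level $0$ and drops out. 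In each instance $k'$ is again of the form $-\tfrac{h^\vee_{\mathfrak a}}{6}-1$ (case (4), as $h^\vee_{E_7}=18$) or $-\tfrac{h^\vee_{\mathfrak a}}{2}+1$ (cases (3) and (2)), so an induction on $m$ --- with base cases $D_4$ and $osp(n+8|n)$ at level $-2$, both already covered by (1), and the induction hypothesis being the theorem for the parameter $m-1$ --- shows that $V_{k'}(\mathfrak a)$ is the unique irreducible object of $KL_{k'}$. Granting hypothesis (2), the discussion preceding Lemma \ref{lemma-pomoc-1} gives $\mu^\natural=0$ and turns \eqref{actionw} into $\ell\big(\ell-(k+1)\big)=0$; since $k+1=-\tfrac{h^\vee}{2}+2<0$ in cases (2) and (3) and $k+1=-9<0$ in case (4), while $\ell\ge0$, this forces $\ell=0$, hence $\mu=0$. (Note that $D_4$ in (3) and $osp(n+8|n)$ in (2) already occur in (1), consistently.)

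I expect the main obstacle to be precisely the bookkeeping in this inductive step for (2) and (3): one must verify through Tables 4 and 5 that the minimal quantum Hamiltonian reduction of $V_k(\g)$ reproduces, at its own level, exactly an algebra of the type treated by the theorem for a strictly smaller basic Lie (super)algebra, so that the induction genuinely closes on the case-(1) algebras. Everything in case (1) is then supplied by Lemma \ref{lemma-pomoc-1} together with the elementary remark on $P_+$ above, so once the reductions of (2)--(4) to (1) are in place the argument is complete.
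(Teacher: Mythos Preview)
Your proposal is correct and follows essentially the same route as the paper: Lemma~\ref{lemma-pomoc-1} together with Remark~\ref{rm} handles case~(1), and cases~(2)--(4) are reduced to~(1) by induction through the collapse $W_k(\g,\theta)\cong V_{k'}(\g')$ read off from Tables~4 and~5, after which equation~\eqref{actionw} forces $\ell=0$. Your verification that the bookkeeping closes (namely that $\g'=osp(n+4(m-1)+8|n)$ at $k'=-2m$ in~(2), $\g'=D_{2m-2}$ at $k'=4-2m$ in~(3), and $\g'=E_7$ at $k'=-4$ in~(4), with base cases covered by~(1)) is exactly what the paper does; your use of $k+1<0$ in~\eqref{actionw} is equivalent to the paper's use of~\eqref{uv-7} and~\eqref{cond-2}. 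One minor point: in case~(1) your argument that $-h^\vee/3\notin\Z_{\ge0}$ is actually more precise than the paper's assertion that $h^\vee\in\Z_{\ge0}$, since for $F(4)$ with $\g^\natural=so(7)$ and $G(3)$ with $\g^\natural=G_2$ one has $h^\vee<0$ but your check $-h^\vee/3\notin\Z$ still goes through.
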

\begin{proof}
If the   Lie superalgebra $\g$ is as in (1), then  Lemma  \ref{lemma-pomoc-1}  and Remark \ref{rm} imply that $\bar \lambda$  is either  $0$ or $  \frac{-h^{\vee}}{6} \theta $. Since  in all cases in (1) we have that  $h ^{\vee} \in {\Z}_{\ge 0}$, one obtains that   the irreducible highest weight  $\g$--module with highest weight  $\bar \lambda = \frac{-h^{\vee}}{6} \theta$  cannot be  finite-dimensional. Therefore $L(\lambda)$ can  not be  a module in $KL_k$. This proves that  $\bar \lambda = 0$ and therefore $V_k(\g)$ is the unique irreducible $V_k(\g)$--module in the category $KL_k$.

 Let us consider   the case  $\g =osp( n + 4m + 8 \vert  n)$.  Then for every $m \in {\Z}_{\ge 0}$ we have:
  \bea
  && h^{\vee} =  4m + 6, \label{cond-0} \\
  && k =-h^{\vee} / 2 + 1 = - 2 (m+1), \label{cond-1}  \\
  && 2 \ell + h^{\vee} - 4  \ne  0 \quad \forall \ell \in \frac{1}{2} {\Z}_{\ge 0}. \label{cond-2}
  \eea

 We prove the claim by induction.  In the case $m= 0$,  the claim was proved in (1). Assume now that the claim holds for $\g' =osp( n + 4 (m -1) + 8, n)$, and
$k' =-2m $.

 By Theorem \ref{TT}, $k = - 2 (m+1)$ is a collapsing level and 
$W_{k} (\g, \theta) = V_{k'} (\g')$.

 By inductive assumption  $V_{k'} (\g')$ is the unique irreducible  $V_{k'} (\g')$ in the category $KL_{k'}$.  By applying (\ref{uv-7})  and (\ref{cond-2}) we get that $\ell = 0$ and therefore   $V_{k}(\g)$ is the unique irreducible $V_k(\g)$--module in the category $KL_k$.  The assertion now follows by induction on $m$.

(3) is a special case of  (2), by taking $n=0$.

(4) follows from the fact that $H_\theta(V_{-10}(E_8))= V_{-4}(E_7)$ and case (1) by applying  formula (\ref{actionw}).
\end{proof}
 \begin{rem}
 Theorem \ref{thm-classification-unique} can be also proved by non-cohomological methods, using explicit formulas for singular vectors and Zhu algebra theory. As an illustration, we shall present in Theorem   \ref{class-d-direct} a direct proof in the case of  $D_{2n}$ at level $k =-h^{\vee} / 2 +1$.
 \end{rem}
 In the following sections we shall study some other applications
 of collapsing levels. We shall restrict our analysis to the case of Lie algebras.    In what follows we let $\omega_1, \ldots , \omega_{n}$ be the fundamental weights for ${\mathfrak g}$ and
 $\Lambda_0, \ldots , \Lambda_{n}$ the
fundamental weights for $\widehat{\mathfrak g}$.

\section{ On complete reducibility in the category  $KL_k$ }

 In this Section we prove complete reducibility results in the category  $KL_k$ when $\g$ is a Lie algebra. We start with a preliminary result, which also holds in the super setting.
\begin{lem} \label{hw} Assume that the Lie superalgebra $\g$ and level $k$ satisfy  the conditions of  Theorem \ref{thm-classification-unique}.  Assume that $M$ is a highest weight $V_k(\g)$--module from the category $KL_k$. Then $M$ is irreducible.
\end{lem}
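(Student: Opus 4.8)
The plan is to reduce the statement to the classification already obtained in Theorem \ref{thm-classification-unique} by an induction on the length of $M$, exploiting the exactness of the Hamiltonian reduction functor $H_\theta$. Let $M = L(\widehat\Lambda)$ be an irreducible quotient of $M$ — wait, more carefully: let $M$ be a highest weight module with highest weight $\widehat\Lambda$, and let $L = L(\widehat\Lambda)$ be its irreducible quotient, with maximal submodule $N$, so $0 \to N \to M \to L \to 0$. Since $M$ is in $KL_k$ and $KL_k$ is closed under submodules and quotients, both $N$ and $L$ lie in $KL_k$. By Theorem \ref{thm-classification-unique}, $L(\widehat\Lambda)$ is a $V_k(\g)$-module in $KL_k$ only if $L(\widehat\Lambda) = V_k(\g) = L(k\Lambda_0)$, i.e. $\widehat\Lambda = k\Lambda_0$. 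So it suffices to show that the highest weight module $M$ with highest weight $k\Lambda_0$ equals $V_k(\g)$, i.e. that $N = 0$.

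First I would argue that, since $M$ is a $V_k(\g)$-module (not merely a $V^k(\g)$-module), $M$ is generated by its highest weight vector $v_{\widehat\Lambda}$ with $\widehat\Lambda = k\Lambda_0$, and $v_{\widehat\Lambda}$ generates a copy of $V^k(\g)$ which must factor through $V_k(\g)$; hence $M$ is a quotient of $V_k(\g)$, and therefore $M = V_k(\g)$ (the latter being irreducible as a $V_k(\g)$-module, hence having no proper nonzero quotients — more precisely $V_k(\g)$ is the unique irreducible module, and any quotient of the vertex algebra by a submodule would still be a $V_k(\g)$-module generated by the vacuum, forcing it to be all of $V_k(\g)$). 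Actually the cleanest route: a highest weight module $M$ over $\widehat\g$ of highest weight $k\Lambda_0$ on which the vertex operators of $V_k(\g)$ act is by definition a quotient of the vertex algebra $V_k(\g)$ viewed as a module over itself; since $V_k(\g)$ is simple, its only nonzero quotient module is itself. Thus $M \cong V_k(\g)$, which is irreducible.

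The point that requires the most care — and the main obstacle — is verifying that a highest weight $V_k(\g)$-module with highest weight $\widehat\Lambda \ne k\Lambda_0$ cannot occur in $KL_k$ purely from Theorem \ref{thm-classification-unique}, which a priori only controls \emph{irreducible} objects. This is handled by the reduction above: the irreducible quotient $L(\widehat\Lambda)$ of such an $M$ again lies in $KL_k$ (quotients stay in the category), so Theorem \ref{thm-classification-unique} forces $\widehat\Lambda = k\Lambda_0$ outright, and then the vertex-algebraic argument of the previous paragraph finishes the proof. One should double-check that $KL_k$ is indeed closed under quotients and submodules — this follows because the defining conditions ($\widehat\g$-module in $\mathcal O^k$, $L_\g(0)$-semisimple with finite-dimensional eigenspaces, a $V_k(\g)$-module) are all inherited by subobjects and quotients in the category of $\widehat\g$-modules, using Proposition \ref{fd} to phrase membership in $KL^k$ as $\g$-local finiteness, which is manifestly inherited.
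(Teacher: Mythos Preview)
Your argument is correct and takes a different route from the paper's. Both proofs first invoke Theorem~\ref{thm-classification-unique} (applied to the irreducible quotient $L(\widehat\Lambda)$, which stays in $KL_k$) to force $\widehat\Lambda = k\Lambda_0$. From there you argue directly that $M$ is a quotient of $V_k(\g)$ viewed as a module over itself, and conclude by simplicity. The paper instead reruns the Casimir/Zhu-algebra argument of Corollary~\ref{simplicity-1}: a proper nonzero submodule $N\subsetneq M$ would have its lowest $L_\g(0)$-graded piece $N(n_0)$ at strictly positive degree $n_0$; the Casimir $C_\g$ acts on $N(n_0)$ by the nonzero scalar $2(k+h^\vee)n_0$, yet every irreducible $\g$-subquotient of $N(n_0)$ is an irreducible finite-dimensional $A(V_k(\g))$-module and hence trivial by Proposition~\ref{zhu-int} together with Theorem~\ref{thm-classification-unique}, a contradiction. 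Your route is shorter, but the phrase ``by definition'' hides two small steps you should make explicit: first, $\g$-local finiteness (from $KL_k$) forces the top space $M(0)$ to be the trivial $\g$-module $\C$, so that $M$ is a quotient of the generalized Verma module $V^k(\g)$; second, if $\pi:V^k(\g)\to M$ is this surjection and $v$ lies in the maximal ideal $I^k$, then $\pi(e^{zT}v)=\pi(Y(v,z)\mathbf 1)=Y_M(v,z)\pi(\mathbf 1)=0$ since $M$ is a $V_k(\g)$-module, whence $I^k\subset\ker\pi$ and $M$ is a quotient of $V_k(\g)$. The paper's approach avoids these vertex-algebraic manipulations at the cost of the Casimir computation; yours is more conceptual once those two points are spelled out. (Your opening sentence about induction on length and $H_\theta$ is a red herring---you never use either.)
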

\begin{proof}
 By using the classification of irreducible modules from Theorem \ref{thm-classification-unique} we know that the highest weight of $M$ is necessary $k \Lambda_0$, and therefore $M$ is a ${\Z}_{\ge 0}$--graded with respect to  $L_{\g} (0)$. Denote   a  highest weight vector by $w_{k \Lambda_0}$.  We have that
 $$ L_{\g}(0) v = 0 \quad \iff \quad v = \nu w_{k \Lambda_0} \quad (\nu \in {\C}). $$
 Assume that $M$ is not irreducible.
  Then it contains a   non-zero graded  submodule $N \ne   M $ with respect to $L_{\g}(0)$:
$$ N = \bigoplus_{n \in  {\Z}_{\ge 0}  }  N (n+n_0), \quad L_{\g} (0)_{\vert N(r)}  = r \mbox{Id}, \quad N(n_0) \ne 0.$$
Since $N \ne  M $  , we have that $n_0 >0$, otherwise $ w_{k \Lambda_0} \in M$.

We can consider $N(n_0)$ as a finite-dimensional module for $\g$ and for the Zhu algebra $A (V_k(\g))$. Note that Theorem  \ref{thm-classification-unique}  and  Proposition \ref{zhu-int}  imply that any irreducible finite-dimensional $A (V_k(\g))$--module is trivial.
Since the Casimir element $C_{\g}$  of $\g$ acts  on $N(n_0)$ as the  non-zero constant    $ 2 (k+h ^{\vee}) n_0 $, we conclude that $C_{\g}$ acts  by the same constant on any irreducible $\g$--subquotient of $N(n_0)$. But  any irreducible subquotient of $N(n_0)$ is  an irreducible finite--dimensional $A(V_k(\g))$--module, and therefore it is trivial.
This implies that $C_{\g}$ acts non-trivially on a trivial $\g$--module, a contradiction.
\end{proof}
The following Lemma is a consequence of \cite[Theorem 0.1]{GK}.
\begin{lem} \cite{GK} \label{gk} Assume that $\g$ is a simple  Lie algebra and $k$ is a rational   number,   $k > - h^{\vee}$. Then,  in the category of $V_k(\g)$--modules, 
we have:
$\mbox{Ext}^1 (V_k(\g), V_k(\g)) = ( 0). $
\end{lem}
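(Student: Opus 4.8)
The plan is to deduce this from the general result of Gorelik--Kac \cite[Theorem 0.1]{GK}, which governs when self-extensions (and more generally extensions between highest weight modules) of simple highest weight modules over an affine Kac--Moody algebra vanish. The relevant vanishing criterion in \cite{GK} is phrased in terms of the non-existence of certain ``dangerous'' weights differing by a positive integral multiple of the imaginary root $\d$: for a rational level $k>-h^\vee$, the key point is that the two weights involved in a non-split extension of $V_k(\g)=L(k\L_0)$ by itself would have to be linked, i.e. $k\L_0$ and $k\L_0+n\d$ would lie in the same orbit of the integral Weyl group for some $n\in\Z_{>0}$, and one shows this cannot happen.

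\medskip

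First I would set $\L=k\L_0$ and recall the shifted action $w\cdot\L=w(\L+\widehat\rho)-\widehat\rho$. An element of $\mathrm{Ext}^1(L(\L),L(\L))$ in the category $\mathcal O$ (hence in the category of $V_k(\g)$-modules, using that $L(\L)$ is a $V_k(\g)$-module and the module structure on an extension is determined by the vertex algebra action) corresponds to a length-two module $M$ with both composition factors $L(\L)$; by the structure of category $\mathcal O$ at non-critical level, if such $M$ is non-split then there is a non-trivial homomorphism between Verma-type modules, forcing $\L$ and $\L-n\d$ to be in the same orbit under the integral Weyl group $\widehat W_{[\L]}$ for some $n\ge1$ (this is exactly the necessary condition extracted from \cite{GK}). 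Next I would compute the integral root system of $\L=k\L_0$: since $k$ is rational and $k>-h^\vee$, one has $(\L+\widehat\rho,\a_0^\vee)=k+1$, which need not be integral, while for the finite simple roots $\a_i$ ($i\ge1$) one has $(\L+\widehat\rho,\a_i^\vee)=1\in\Z$. Hence the integral root system $\widehat\D_{[\L]}$ is generated by the finite simple coroots together with possibly real affine roots of the form $\d\cdot m-\eta$; crucially, because $k>-h^\vee$ the affine weight $\L+\widehat\rho$ is dominant with respect to $\a_0$ in the sense that $(\L+\widehat\rho,\a_0^\vee)=k+1>-h^\vee+1$, and one checks that $\L$ is in fact the unique dominant (or antidominant, depending on normalization) weight in its integral orbit lying at ``top degree'', so no $\widehat W_{[\L]}$-translate of $\L$ can equal $\L-n\d$ with $n>0$.

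\medskip

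Concretely, the cleanest way to see the contradiction is via the $\d$-grading / conformal weight: $L_\g(0)$ acts on $L(\L)$ with eigenvalues in $\frac{(\L+2\widehat\rho,\L)}{2(k+h^\vee)}+\Z_{\ge0}$, i.e. $L(\L)$ is $\Z_{\ge0}$-graded with lowest weight $0$, and any subquotient of a self-extension is again $\Z_{\ge0}$-graded with the generating vector in degree $0$; but a non-split self-extension would produce a weight vector of $L_\g(0)$-degree $0$ which is not a multiple of the highest weight vector, contradicting that $L(\L)(0)$ is one-dimensional (this is the same Zhu-algebra / Casimir argument used in Lemma \ref{hw} and Corollary \ref{simplicity-1}: the Casimir $C_\g=2(k+h^\vee)(d+L_\g(0))$ would act by the same scalar on the two factors, forcing the extension to split at the level of degree-$0$ components, and then the vertex algebra action propagates this to all of $M$). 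I expect the only real subtlety is the bookkeeping needed to invoke \cite[Theorem 0.1]{GK} in precisely the right form and to confirm that the hypothesis ``$k$ rational, $k>-h^\vee$'' is exactly what rules out the exceptional linkage patterns in \cite{GK} --- this is the main obstacle, but it is entirely a matter of matching hypotheses, since the conclusion is stated essentially verbatim there.
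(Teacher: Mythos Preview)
Your opening plan---deduce the lemma from \cite[Theorem~0.1]{GK}---is exactly what the paper does: the lemma is stated with the one-line remark that it ``is a consequence of \cite[Theorem~0.1]{GK}'' and no further proof is supplied. So simply invoking \cite{GK} is the intended and complete argument.

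Your ``concrete'' alternative, however, does not prove the lemma as stated. An extension $0\to V_k(\g)\to M\to V_k(\g)\to 0$ in the category of $V_k(\g)$-modules is not a priori $L_\g(0)$-graded: nothing prevents $L_\g(0)$ (equivalently $d$) from acting on $M$ with a nontrivial nilpotent part. Your sketch tacitly assumes this grading; granted semisimplicity of $L_\g(0)$, one can indeed show that a non-split $M$ is a highest weight module, hence a quotient of the Verma module $M(k\Lambda_0)$, and then $[M(k\Lambda_0):L(k\Lambda_0)]=1$ gives the contradiction. So what you have actually argued is the vanishing of $\mathrm{Ext}^1$ inside $KL_k$---enough for the application in the subsequent theorem, but weaker than the lemma as written. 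Ruling out the non-semisimple case is precisely the content of \cite{GK}, and it is there that the hypotheses $k\in\Q$ and $k>-h^\vee$ are genuinely used. The affine Casimir does not rescue the argument: on an indecomposable module it has a single generalized eigenvalue but need not be diagonalizable, so it produces no splitting. The comparison with Lemma~\ref{hw} and Corollary~\ref{simplicity-1} is therefore misleading---in both of those results the module is already $L_\g(0)$-graded by hypothesis, which is exactly what is at issue here.
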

\begin{thm} Assume that  $\g$   is a simple   Lie algebra and that the level $k$ satisfies   the conditions of  Theorem \ref{thm-classification-unique}.
Then  any $V_k(\g)$--module $M$ from the category $KL_k$ is  completely  reducible.
\end{thm}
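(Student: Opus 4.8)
The plan is to reduce complete reducibility in $KL_k$ to two already-established facts: first, that every highest weight $V_k(\g)$--module in $KL_k$ is irreducible (Lemma~\ref{hw}), and second, that $\mathrm{Ext}^1(V_k(\g),V_k(\g))=(0)$ in the appropriate category (Lemma~\ref{gk}). Since by Theorem~\ref{thm-classification-unique} the algebra $V_k(\g)$ is the unique irreducible object of $KL_k$, a completely reducible module in $KL_k$ is just a (possibly infinite) direct sum of copies of $V_k(\g)$, so what must be shown is that an arbitrary $M\in KL_k$ is semisimple with all composition factors isomorphic to $V_k(\g)$.

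First I would invoke the general machinery set up in the previous section: by Proposition~\ref{fd} any $M\in KL_k$ is $\g$--locally finite, and by \cite[Proposition 3.1]{DGK} (as used in the proof of Proposition~\ref{fd}) $M$ carries an increasing filtration whose successive quotients are highest weight $V^k(\g)$--modules lying in $KL^k$; since $M$ is a $V_k(\g)$--module, these quotients are highest weight $V_k(\g)$--modules in $KL_k$, hence by Lemma~\ref{hw} each is isomorphic to $V_k(\g)$. Thus every $M\in KL_k$ has all composition factors isomorphic to $V_k(\g)$. Next I would use the contravariant functor $M\mapsto M^\sigma$: by Lemma~\ref{chev} it preserves $KL_k$, and it fixes $V_k(\g)$ (since $V_k(\g)$ is self-contragredient). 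One then applies the abstract ``BGG reciprocity''-style argument of \cite{GK}, \cite{DGK}: a module in a category that is closed under $M\mapsto M^\sigma$, whose objects are of ``finite length in each $L_\g(0)$-eigenspace'', and for which every highest weight module is simple, is completely reducible provided $\mathrm{Ext}^1$ between the (unique) simple object and itself vanishes. This is exactly Theorem~\ref{general-complete-reducibility} referenced in the introduction, so I would structure the proof as: verify the hypotheses of that theorem — closure under $\sigma$ (Lemma~\ref{chev}), irreducibility of highest weight modules (Lemma~\ref{hw}), and the Ext-vanishing (Lemma~\ref{gk}, after checking $k>-h^\vee$ and $k\in\Q$ for each case of Theorem~\ref{thm-classification-unique}) — and conclude.

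The point requiring the most care is the self-extension hypothesis: Lemma~\ref{gk} requires $k$ rational with $k>-h^\vee$, so I would need to check case by case in Theorem~\ref{thm-classification-unique} that this holds. For the Deligne series at $k=-h^\vee/6-1$ one has $k+h^\vee = 5h^\vee/6-1>0$ since $h^\vee\ge 2$, and $k$ is rational; for $\g=D_{2m}$ and $\g=osp(n+4m+8|n)$ at $k=-h^\vee/2+1$ one has $k+h^\vee=h^\vee/2+1>0$; and for $E_8$ at $k=-10$, $k+h^\vee=20>0$. (In the genuinely super cases of Theorem~\ref{thm-classification-unique} that are not Lie algebras one cannot cite Lemma~\ref{gk}, but this theorem is stated only for $\g$ a simple Lie algebra, so that is not an issue here.) A secondary subtlety is the passage from ``self-extensions of $V_k(\g)$ vanish'' to ``every $M\in KL_k$ is semisimple'' when $M$ may have infinite length: here I would argue inductively along the $L_\g(0)$-filtration — each $M(\alpha)$ being finite-dimensional and the $L_\g(0)$-spectrum bounded below on each coset $d_i+\Z_{\ge0}$ — so that the socle filtration stabilizes in each graded piece and the vanishing of $\mathrm{Ext}^1(V_k(\g),V_k(\g))$ forces $M$ to coincide with its socle. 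This bookkeeping, already implicit in the proof of Proposition~\ref{fd}, is the main technical obstacle, but it is routine given the structure theory of category $\mathcal O$ recalled above.

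\begin{proof}
By Theorem~\ref{thm-classification-unique}, $V_k(\g)$ is the unique irreducible object of $KL_k$; hence it suffices to show that an arbitrary $M\in KL_k$ is a direct sum of copies of $V_k(\g)$. By Proposition~\ref{fd}, $M$ is $\g$--locally finite, and, arguing as in the proof of that proposition via \cite[Proposition 3.1]{DGK}, $M$ admits an increasing filtration $\{0\}=M_0\subset M_1\subset\cdots\subset M$ with each $M_j/M_{j-1}$ a highest weight $V_k(\g)$--module in $KL_k$; by Lemma~\ref{hw} each such quotient is isomorphic to $V_k(\g)$. In particular every composition factor of $M$ is $V_k(\g)$, and in each $L_\g(0)$--eigenspace $M(\alpha)$ only finitely many of the $M_j$ contribute, as the lowest conformal weights $h_{\lambda_j}=0$ are bounded below while the $L_\g(0)$--spectrum of $M$ meets each coset $d_i+\Z_{\ge 0}$ in a set bounded below.

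By Lemma~\ref{chev} the functor $M\mapsto M^\sigma$ preserves $KL_k$, and since $V_k(\g)$ is self-contragredient it satisfies $V_k(\g)^\sigma\cong V_k(\g)$. For each family in Theorem~\ref{thm-classification-unique} the level is rational and satisfies $k>-h^\vee$: for the Deligne series $k=-\tfrac{h^\vee}{6}-1$ gives $k+h^\vee=\tfrac{5h^\vee}{6}-1>0$; for $\g=D_{2m}$ or $\g=osp(n+4m+8|n)$, $k=-\tfrac{h^\vee}{2}+1$ gives $k+h^\vee=\tfrac{h^\vee}{2}+1>0$; and for $\g=E_8$, $k=-10$ gives $k+h^\vee=20>0$. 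Hence Lemma~\ref{gk} applies and $\mathrm{Ext}^1(V_k(\g),V_k(\g))=(0)$ in the category of $V_k(\g)$--modules.

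Now let $M\in KL_k$ and let $N=\mathrm{soc}(M)$ be its socle, a direct sum of copies of $V_k(\g)$. If $N\ne M$, choose $\alpha$ minimal (in the sense of the filtration above) with $(M/N)(\alpha)\ne 0$; then $M/N$ contains a submodule which is an extension of $V_k(\g)$ by a semisimple submodule of $N$ meeting only finitely many graded pieces, so by $\mathrm{Ext}^1$--vanishing this extension splits, producing a copy of $V_k(\g)$ in $M$ not contained in $N$ — contradicting the maximality of the socle. Therefore $M=N$ is completely reducible.
\end{proof}
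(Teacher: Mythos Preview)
Your argument is essentially correct and rests on the same two ingredients the paper uses --- Lemma~\ref{hw} (every highest weight $V_k(\g)$--module in $KL_k$ is irreducible) and Lemma~\ref{gk} (the self-$\mathrm{Ext}^1$ of $V_k(\g)$ vanishes) --- so the approach is the same. The paper's execution is more direct, however: once one knows that every composition factor is $V_k(\g)$, it follows that $M$ is $\Z_{\ge 0}$--graded by $L_\g(0)$ with $M(0)$ finite-dimensional; taking a basis $w_1,\dots,w_s$ of $M(0)$, each $w_i$ is a highest weight vector, so $V_k(\g)\!\cdot w_i\cong V_k(\g)$ by Lemma~\ref{hw}, and then a short induction on $s$ together with Lemma~\ref{gk} gives $M\cong\bigoplus_i V_k(\g)\!\cdot w_i$. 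In particular Lemma~\ref{chev} and the general machinery of Theorem~\ref{general-complete-reducibility} are not needed here; the paper proves the present theorem \emph{before} those tools are deployed.

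Two small points on your write-up. First, the sentence ``$M/N$ contains a submodule which is an extension of $V_k(\g)$ by a semisimple submodule of $N$'' is garbled: $N$ has already been killed in $M/N$. What you mean is that the preimage $M'\subset M$ of a simple submodule of $M/N$ sits in an exact sequence $0\to N\to M'\to V_k(\g)\to 0$. Second, to apply $\mathrm{Ext}^1$--vanishing to this sequence you need $N$ to be a \emph{finite} direct sum of copies of $V_k(\g)$; the phrase ``meeting only finitely many graded pieces'' does not establish this. The clean justification is that $N\cong\bigoplus_I V_k(\g)$ forces $N(0)\cong\C^{|I|}$, and $N(0)\subset M(0)$ is finite-dimensional, so $|I|<\infty$. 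With that fix your socle argument goes through. (Your explicit verification that $k\in\Q$ and $k>-h^\vee$ in each case is a nice touch that the paper omits.)
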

\begin{proof}
Since $M$ is in $KL_k$ we have that any irreducible subquotient of $M$ is isomorphic to $V_k(\g)$. $M$ has finite length. This implies that $M$ is ${\Z}_{\ge 0}$--graded:
$$ M = \bigoplus_{n \in  {\Z}_{\ge 0}  } M (n), \quad L_{\g} (0)_{\vert M(r)}  = r \mbox{Id}.$$
Assume that $M(0) = \span_{\C} \{ w_1, \dots, w_s\}$. Then by Lemma \ref{hw} we have that $V_k(\g) w_i \cong V_k(\g)$ for every $i=1, \dots, s$. Now using Lemma \ref{gk} we get
$M \cong \oplus V_{k} (\g) w_i$ and therefore $M$ is  completely reducible.
\end{proof}
\begin{rem} We expect that the previous theorem holds in the case when $\g$ is the Lie superalgebra  from   Theorem \ref{thm-classification-unique}.
We shall study this case in \cite{AKMPP-work-in-progress}. \end{rem}
We shall now  prove much more general result on complete reducibility in $KL_k$.
\begin{thm} \label{general-complete-reducibility}
  Assume that   level  $k \in {\Q}$, $k> -h ^{\vee}$,  and  the simple Lie algebra $\g$ satisfy  the following property:
  \bea  \mbox{ Every highest weight} \  V_k(\g) \mbox{--module  in} \  KL_k  \   \mbox{is irreducible}. \label{asum-hwm} \eea
Then  the category $KL_k$ is semi-simple.
\end{thm}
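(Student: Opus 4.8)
The plan is to deduce semisimplicity of $KL_k$ from the irreducibility hypothesis \eqref{asum-hwm} by the strategy announced in the introduction: (i) show every module in $KL_k$ has finite length; (ii) reduce complete reducibility to the vanishing of $\mathrm{Ext}^1$ between irreducibles in $KL_k$; (iii) prove that vanishing using the contravariant functor $M\mapsto M^\sigma$ of Lemma \ref{chev} together with \eqref{asum-hwm}. First I would observe that since $k\in\Q$, $k>-h^\vee$, the level is admissible-like enough that $V_k(\g)$ has finitely many irreducibles in $KL^k$ (this is where one invokes the relevant finiteness input; alternatively one argues directly that by \eqref{asum-hwm} the irreducibles in $KL_k$ are exactly the $\g$-integrable $L(\lambda)$ with $\lambda(\alpha_0^\vee)\notin\Z_{\ge0}$, a finite set), so every $M\in KL_k$ is of finite length and is $\Z_{\ge0}$-graded (after a shift) for $L_\g(0)$ on each generalized eigenspace; in particular it suffices to treat the case where all composition factors are isomorphic to a fixed irreducible $L$.

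The core step is: for any two irreducible modules $L_1,L_2\in KL_k$, $\mathrm{Ext}^1_{KL_k}(L_1,L_2)=0$. The plan is to take a short exact sequence $0\to L_2\to M\to L_1\to 0$ in $KL_k$ and show it splits. By Lemma \ref{chev}(2), applying $(\cdot)^\sigma$ gives $0\to L_1^\sigma\to M^\sigma\to L_2^\sigma\to 0$; since $\sigma$ fixes $L_\g(0)$ and the Chevalley antiautomorphism sends a highest weight irreducible to itself, $L_i^\sigma\cong L_i$. The key point, following \cite{GK} and \cite{DGK} in the vertex-algebraic setting, is that a non-split self-extension or cross-extension would produce a highest weight submodule of $M$ (or of $M^\sigma$) that is not irreducible: concretely, pick a highest weight vector of $M$ of minimal $L_\g(0)$-eigenvalue lying over $L_1$; the submodule it generates is a highest weight module in $KL_k$, hence irreducible by \eqref{asum-hwm}; if it maps onto $L_1$ the sequence splits, and if it lies inside $L_2$ one runs the symmetric argument on $M^\sigma$. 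Combining the two (a non-split extension cannot have both $M$ and $M^\sigma$ fail to contain a copy of $L_1$ as a submodule, because a module and its $\sigma$-dual have the same socle dimension over a given composition factor only if... ) forces a splitting. I would phrase this cleanly via: $\mathrm{Hom}(L_1,M)\ne 0$ or $\mathrm{Hom}(L_1,M^\sigma)\ne0$, and in the latter case dualize back to get $\mathrm{Hom}(M,L_1)\ne 0$ splitting the surjection.

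From the $\mathrm{Ext}^1$-vanishing between all irreducibles, complete reducibility of an arbitrary finite-length $M\in KL_k$ follows by the standard induction on length: write $0\to N\to M\to L\to 0$ with $N$ semisimple and $L$ irreducible; then $M$ is an extension of $L$ by a direct sum of irreducibles $L_j$, and $\mathrm{Ext}^1(L,\bigoplus L_j)=\bigoplus\mathrm{Ext}^1(L,L_j)=0$, so $M\cong N\oplus L$. Hence $KL_k$ is semisimple.

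The main obstacle I anticipate is the $\mathrm{Ext}^1$ step, specifically making rigorous the claim that a non-split extension inside $KL_k$ forces the existence of a \emph{reducible} highest weight submodule in $M$ or $M^\sigma$ — i.e. transporting the $\mathfrak{g}$-module arguments of \cite{GK}, \cite{DGK} to $\widehat{\g}$-modules in a way compatible with the vertex algebra structure. The delicate bookkeeping is with the $L_\g(0)$-grading: one must choose the right generator (a singular vector for $\widehat{\g}$, not merely a $\g$-highest weight vector) and verify the submodule it generates is genuinely a highest weight module in the $\widehat{\g}$ sense so that \eqref{asum-hwm} applies; and one must handle the possibility that the extension is "in the wrong direction," which is exactly what the functor $M\mapsto M^\sigma$ from Lemma \ref{chev} is there to fix. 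Everything else (finite length, the grading, the inductive passage from $\mathrm{Ext}^1$-vanishing to semisimplicity) is routine.
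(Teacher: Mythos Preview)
Your overall strategy matches the paper's: show $\mathrm{Ext}^1$ vanishes between irreducibles in $KL_k$ using the functor $M\mapsto M^\sigma$ and hypothesis \eqref{asum-hwm}, then deduce semisimplicity. But there is a genuine gap in step (i), and the $\mathrm{Ext}^1$ argument needs sharpening.

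\textbf{The finite-length claim is wrong.} You assert that $V_k(\g)$ has finitely many irreducibles in $KL^k$ (or that the set of $\g$-integrable $L(\lambda)$ with $\lambda(\alpha_0^\vee)\notin\Z_{\ge0}$ is finite). Neither follows from the hypotheses $k\in\Q$, $k>-h^\vee$; for instance $V_{3-2\ell}(D_{2\ell-1})$ (case (5) of Theorem~\ref{cases-semi-simple}) has infinitely many irreducibles in $KL_k$, and modules in $KL_k$ need not have finite length. The paper handles this differently: first it treats \emph{finitely generated} $M\in KL_k$, using \cite[Proposition 3.1]{DGK} to get a filtration by highest weight modules which terminates (by finite generation), and then disposes of arbitrary $M$ by writing it as a countable union of finitely generated submodules, each of which is already known to be semisimple. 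Your induction on length must be replaced by this two-stage argument.

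\textbf{The $\mathrm{Ext}^1$ step.} Your description is on the right track but the case logic is tangled: you ``pick a highest weight vector lying over $L_1$'' and then discuss the possibility that it ``lies inside $L_2$'', which is contradictory. The paper's clean resolution is a dichotomy on weights. For $0\to L(\lambda_1)\to M\to L(\lambda_2)\to 0$ with $\lambda_1\ne\lambda_2$: if $\lambda_1-\lambda_2\notin Q_+$ then $\lambda_2$ is a maximal weight of $M$, so a preimage of the highest weight vector of $L(\lambda_2)$ is automatically singular and generates (by \eqref{asum-hwm}) a copy of $L(\lambda_2)$ splitting the sequence; if $\lambda_1-\lambda_2\in Q_+$, applying $\sigma$ swaps the roles and reduces to the first case. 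The self-extension case $\lambda_1=\lambda_2$ is handled in the paper by invoking \cite[Theorem~0.1]{GK} directly (this is where $k\in\Q$, $k>-h^\vee$ is actually used), rather than by the same maneuver; your sketch does not isolate this case.
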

\begin{proof}
We shall present a sketch of the proof and omit some standard representation theoretic arguments which can be found in \cite{DGK} and \cite{GK}.
\begin{itemize}
 \item   Since   every irreducible  $V_k(\g)$-module  in  $KL_k$  is isomorphic to $L(\lambda)$ for certain rational, non-critical weight $\lambda$, then \cite[Theorem 0.1]{GK} implies that
$ \mbox{Ext}^1 (L(\lambda) , L(\lambda)  ) = ( 0 ) $  in the category $KL_k$.
\item
We prove that in the category $KL_{k}$ we have
\begin{equation} Ext^1 (L_1, L_2 ) =  (0 )  \label{ext2} \end{equation}
   for any two irreducible modules $L_1$ and $L_2$ from  $KL_{k}$.

It remains  to consider the case  $L_1 \ne L_2$.  Take an   exact sequence in $KL_k$:
   $$ 0 \rightarrow L(\lambda_1) \rightarrow M \rightarrow L(\lambda_2) \rightarrow 0, $$
   where $\lambda_1 \ne \lambda_2$.
   Then $M$ contains a singular vector $w_{\lambda_1}$ of highest weight $\lambda_1$  and a subsingular vector $w_{\lambda_2}$ of weight $\lambda_2$ and $w_{\lambda_1}$ generates  a submodule isomorphic to $L(\lambda_1)$.
   Consider the case $\lambda_1 -\lambda_2 \notin Q_+$. Then $\lambda_2$ is a maximal element of  the set $\Omega(M)$ of  weights of $M$,  and therefore the  subsingular vector $w_{\lambda_2}$  in $M$ of weight $\lambda_2$ is  a singular vector. By  (\ref{asum-hwm}),  it generates an irreducible module isomorphic to $L(\lambda_2)$ and we conclude that $M\cong L(\lambda_1) \oplus L(\lambda_2)$.

   If $\lambda_1 -\lambda_2 \in Q_+$ we can use the  contravariant  functor $M \mapsto M^{\sigma} $     and get an exact sequence
     $$ 0 \rightarrow L(\lambda_2) \rightarrow M^{\sigma}  \rightarrow L(\lambda_1) \rightarrow 0. $$
     Since $ M^{\sigma} $ is again a $V_k(\g)$--module in $KL_k$  (cf. Lemma \ref{chev})  by the first case we have that $M^{\sigma} =  L(\lambda_1) \oplus L(\lambda_2)$. 
     This  implies that
     $$M=L(\lambda_1)^{\sigma}  \oplus L(\lambda_2)^{\sigma} =L(\lambda_1)  \oplus L(\lambda_2). $$

  \item
    Assume now  that $M$ is a  finitely generated  module from $KL_{k}$.  Then    from  \cite[Proposition 3.1]{DGK}
 we see  that $M$   has an increasing filtration 
 \begin{equation}\label{cs}(0) = M_0 \subseteq M_1 \subseteq  \cdots   \end{equation}
 such that 
   \begin{enumerate} \item for every $j \in {\Z}_{>0}$,  $M_j / M_{j-1}$ is an highest weight  module in category $\mathcal O$;
   \item for any weight $\l$ of $M$, there exists $r$ such that $(M/M_r)_\l=0$.
   \end{enumerate}
Since $M$ is finitely generated as $\widehat{\g}$--module, we can assume that its generators are weight vectors of
weights say $\mu_1, ...\mu_p$. Since they are a finite number there
certainly exists  $t$ such that $(M/M_t)_{\mu_i}=0$ for all $i=1,..,p$.
Hence the filtration \eqref{cs} is finite and stops at  $M=M_t$. Since  $M$ is in category $KL_k$, we have that the factors of \eqref{cs} are in category $KL_k$. Hence, by our assumption, they   
are  irreducible.  Therefore \eqref{cs} is  a composition series of finite length.
        Using assumption  (\ref{asum-hwm}),  relation (\ref{ext2}) and induction on $t$  we get that
    $$M \cong \bigoplus _{j=1} ^t L(\lambda_j). $$
 \item
 Finally, we shall consider the case when $M$ is  not finitely generated. Since $M$ is in $KL_k$, it is countably  generated. So
 $M = \cup_{n=1} ^{\infty} M^{(n)}$ such that each $M^{(n)}$ is  finitely generated $V_k(\g)$--module.  By   previous case  $M^{(n)}$ is completely reducible, so:
\bea  M^{(n)} = \bigoplus_{i = 1} ^{n_i}  L(\lambda_{i,n}). \label{dec-n}\eea
Therefore $M$ is a sum of irreducible modules from $KL_k$  and by using classical algebraic arguments one can see that $M$ is a direct sum of countably many  irreducible modules from $KL_k$ appearing in decompositions (\ref{dec-n}).
   \end{itemize}
 The claim  follows.
 \end{proof}
In order to apply Theorem \ref{general-complete-reducibility}, the basic step is to check relation (\ref{asum-hwm}). We have the following method.
  \begin{lem}  \label{kriterij} Let $k \in {\Q} \setminus {\Z_{\ge 0}}$.
  Assume that $H_{\theta} (U)$ is an irreducible, non-zero $W_k(\g, \theta) = H_{\theta} (V_k(\g))$--module  for every non-zero highest weight  $V_k(\g)$--module  $U$ from  the category  $KL_k$. Then every highest weight $V_k(\g)$--module in $KL_k$  is irreducible.
  \end{lem}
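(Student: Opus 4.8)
The plan is to prove the contrapositive: if some highest weight $V_k(\g)$--module $U$ in $KL_k$ is not irreducible, then $H_\theta(U)$ fails to be irreducible (or zero). First I would take such a $U$ with highest weight $\l$, and let $N\subsetneq U$ be the maximal proper submodule, so that $U/N\cong L(\l)$ and $N\ne(0)$. Both $U$ and $L(\l)$ lie in $KL_k$, hence in $\mathcal O^k$, and by Remark \ref{rm} we have $\l(\a_0^\vee)\notin\Z_{\ge0}$; the same applies to any highest weight subquotient of $N$, since all weights of $N$ are of the form $\l-\eta$ with $\eta\in Q_+$ and a highest weight vector of a subquotient of $N$ has weight $\l'$ with $\l'(\a_0^\vee)\notin\Z_{\ge0}$ as well (because $N$ is a module in $KL^k$, which is closed under subquotients, so Remark \ref{rm} applies to its irreducible subquotients). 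Thus by Theorem \ref{A}(2), $H_\theta$ kills none of the relevant irreducible subquotients.

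Next I would apply the exact functor $H_\theta$ to the short exact sequence $0\to N\to U\to L(\l)\to0$, obtaining an exact sequence $0\to H_\theta(N)\to H_\theta(U)\to H_\theta(L(\l))\to0$ of $W_k(\g,\theta)$--modules. By Theorem \ref{A}(2), $H_\theta(L(\l))=L_{\g^\natural}(\phi_\l)$ is a nonzero irreducible module, and it is a highest weight module for $W_k(\g,\theta)$. It remains to show $H_\theta(N)\ne(0)$: then $H_\theta(U)$ has a nonzero proper submodule, so it is not irreducible, contradicting the hypothesis. For this, pick any irreducible subquotient $L(\l')$ of $N$ (it exists since $N\ne(0)$ and $N$ has, locally, a composition series because it lies in $\mathcal O^k$); applying the exact functor $H_\theta$ to the corresponding subquotient presentation and using $\l'(\a_0^\vee)\notin\Z_{\ge0}$ together with Theorem \ref{A}(2), we get that $H_\theta(L(\l'))\ne(0)$, which forces $H_\theta(N)\ne(0)$ by exactness (a nonzero subquotient of $H_\theta(N)$ cannot arise from the zero module).

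The main obstacle I anticipate is the bookkeeping around subquotients and composition series: $N$ need not have finite length a priori, so one must argue, using the structure of category $\mathcal O^k$ (finitely many maximal weights, finite-dimensional weight spaces, and the filtration result \cite[Proposition 3.1]{DGK} already invoked in the proof of Proposition \ref{fd}), that $N$ at least possesses a nonzero highest weight subquotient to which Theorem \ref{A} applies cleanly. A slightly cleaner route, avoiding this, is to instead choose a maximal weight $\l'$ among the weights of $N$: then a nonzero weight vector $w_{\l'}\in N_{\l'}$ generates a highest weight submodule $N'=U(\widehat\g)w_{\l'}$ of $N$, which lies in $KL^k$, satisfies $\l'(\a_0^\vee)\notin\Z_{\ge0}$ by Remark \ref{rm}, and hence has $H_\theta(N')\ne(0)$ by Theorem \ref{A}(2); exactness of $H_\theta$ and the inclusion $N'\hookrightarrow N\hookrightarrow U$ then give $(0)\ne H_\theta(N')\subseteq H_\theta(N)\subseteq H_\theta(U)$ with the quotient $H_\theta(U)/H_\theta(N)\cong H_\theta(L(\l))\ne(0)$, so $H_\theta(U)$ is reducible. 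This establishes the contrapositive and hence the lemma.
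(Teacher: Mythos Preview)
Your argument is correct and follows essentially the same strategy as the paper's proof: take a reducible highest weight module, extract a nonzero highest weight submodule, and use exactness of $H_\theta$ together with nonvanishing on the pieces to contradict irreducibility of the image. Your ``cleaner route'' at the end (choosing a maximal weight $\lambda'$ of $N$ and generating a highest weight submodule $N'$) is exactly what the paper does, only with different labeling: the paper starts by picking a highest weight submodule $U\subsetneq M$ directly, so that both $U$ and $M/U$ are highest weight in $KL_k$, and then applies the hypothesis to all three of $M$, $U$, $M/U$.

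One simplification worth noting: you invoke Theorem~\ref{A}(2) and Remark~\ref{rm} to obtain $H_\theta(L(\lambda))\ne 0$ and $H_\theta(N')\ne 0$, but this is unnecessary. The hypothesis of the lemma already asserts that $H_\theta$ is nonzero (and irreducible) on \emph{every} nonzero highest weight $V_k(\g)$--module in $KL_k$; since $L(\lambda)$ and $N'$ are such modules, the hypothesis applies directly. The paper exploits this, which makes the argument shorter and avoids the bookkeeping you flagged about subquotients and composition series.
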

  \begin{proof}
  Assume that $M$ is a highest weight $V_k(\g)$--module in $KL_k$. Then $H_{\theta} (M)$ is an  irreducible $H_{\theta} (V_k(\g))$--module. If $M$ is not irreducible, then it contains a highest weight submodule $U$ such that
  $ \{0\}\subsetneqq U  \subsetneqq M$. Modules $U$ and $M / U$ are again highest weight modules  in $KL_k$.
By  the assumption of the Lemma we have that $H_{\theta} (U)$ is a non-trivial submodule of $H_{\theta} (M)$. Irreducibility of $H_{\theta} (M)$ implies that $H_{\theta} (U) = H_{\theta} (M)$, and therefore $H_{\theta} (M / U) = \{0\}$, a contradiction.
  \end{proof}
  \begin{thm}  \label{rational}
Assume that $\g$ is a  simple Lie algebra and  $k \in {\C}\setminus {\Z}_{\ge 0}$ such that  $W_k(\g, \theta)$ is rational. Then $KL_k$ is a semi-simple category.
\end{thm}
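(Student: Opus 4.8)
The plan is to verify hypothesis \eqref{asum-hwm} of Theorem \ref{general-complete-reducibility} and then invoke that theorem. First I would note that since $W_k(\g,\theta)$ is rational it is in particular $C_2$-cofinite and quasi-lisse; by the results quoted after the definition of $KL_k$, $V_k(\g)$ then has finitely many irreducible modules in $KL^k$ and every object of $KL_k$ has finite length. Moreover rationality forces $k$ to be a rational number; and since a rational $W$-algebra is nonzero and well-behaved we are not at the critical level, and the relevant positivity $k>-h^\vee$ should hold in all the cases that actually occur (this is where I would need to be a little careful — see below). Granting $k\in\Q$, $k>-h^\vee$, the remaining task is exactly to check that every highest weight $V_k(\g)$-module in $KL_k$ is irreducible.

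For that I would apply Lemma \ref{kriterij}. So let $U$ be a nonzero highest weight $V_k(\g)$-module in $KL_k$, say with highest weight $\widehat\Lambda$; I must show $H_\theta(U)$ is an irreducible nonzero $W_k(\g,\theta)$-module. Nonvanishing: by Remark \ref{rm}, since $k\notin\Z_{\ge0}$ and $U$ lies in $KL^k$, its highest weight satisfies $\widehat\Lambda(\alpha_0^\vee)\notin\Z_{\ge0}$; hence by Theorem \ref{A}(2), $H_\theta$ does not kill the irreducible quotient $L(\widehat\Lambda)$, and since $H_\theta$ is exact (Theorem \ref{A}(1)) and $U$ surjects onto $L(\widehat\Lambda)$, we get $H_\theta(U)\ne\{0\}$. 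Irreducibility: $U$ is a quotient of the Verma module $M(\widehat\Lambda)$, so by exactness $H_\theta(U)$ is a quotient of $H_\theta(M(\widehat\Lambda))$, which is a highest weight (Verma-type) module over $W_k(\g,\theta)$ with highest weight $\phi_{\widehat\Lambda}$. Thus $H_\theta(U)$ is a nonzero highest weight $W_k(\g,\theta)$-module. Because $W_k(\g,\theta)$ is rational, every $\Z_{\ge0}$-graded module over it is completely reducible; a nonzero highest weight module that is also completely reducible must be irreducible (its highest weight line generates an irreducible summand, which is everything). Hence $H_\theta(U)$ is irreducible, and Lemma \ref{kriterij} gives that every highest weight $V_k(\g)$-module in $KL_k$ is irreducible, i.e. \eqref{asum-hwm} holds. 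Theorem \ref{general-complete-reducibility} then yields that $KL_k$ is semi-simple.

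The main obstacle I anticipate is the bookkeeping around the hypotheses $k\in\Q$ and $k>-h^\vee$ needed to quote \cite{GK} through Theorem \ref{general-complete-reducibility}: one must argue that rationality of $W_k(\g,\theta)$ indeed implies $k$ is rational and that $k>-h^\vee$ in all occurring cases (equivalently, exclude the boundary/critical phenomena), perhaps by inspecting the list of levels for which $W_k(\g,\theta)$ is rational. A secondary point worth a line is the claim that a $\Z_{\ge0}$-graded module over a rational vertex operator algebra decomposes into irreducibles with finite-dimensional graded pieces — this is standard (Zhu, \cite{Z}) but should be cited. Apart from that, every step is a direct application of the exactness and nonvanishing properties of $H_\theta$ recorded in Theorem \ref{A}, together with Lemma \ref{kriterij} and Theorem \ref{general-complete-reducibility}.
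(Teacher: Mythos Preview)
Your proposal is correct and follows essentially the same route as the paper: verify the hypothesis of Lemma~\ref{kriterij} by using Remark~\ref{rm} and Theorem~\ref{A} to see that $H_\theta$ of a highest weight module in $KL_k$ is a nonzero highest weight $W_k(\g,\theta)$--module, then use rationality to get irreducibility, and conclude via Theorem~\ref{general-complete-reducibility}. Your caution about the side conditions $k\in\Q$ and $k>-h^\vee$ needed for Theorem~\ref{general-complete-reducibility} is well placed; the paper's proof does not verify them explicitly either, so you are not missing an argument that is present there.
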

 \begin{proof}
 Assume that $\widetilde L(\lambda)$ is a  highest weight $V_{k}(\g)$--module in $KL_k$. Clearly  $\lambda (\alpha_0^{\vee}) \notin {\Z}_{\ge 0}$ and by Theorem \ref{A} $H_{\theta} ( \widetilde L(\lambda) ) \ne (0)$. Since  $H_{\theta} ( \widetilde L(\lambda) )$ is non-zero highest weight module for the  rational vertex algebra $W_k(\g, \theta)$, we conclude that $H_{\theta} ( \widetilde L(\lambda) )$ is irreducible. Now assertion follows from Theorem \ref{general-complete-reducibility}  and Lemma \ref{kriterij}.
 \end{proof}

 \begin{rem}
 The previous theorem proves that the category $KL_k$ is semisimple in the following (non-admissible) cases:
 \begin{itemize}
 \item $\g=  D_4, E_6, E_7, E_8$ and $k = - \frac{h ^{\vee}}{6}$ using   results from \cite{Kaw}.
 \end{itemize}
 \end{rem}
Moreover, using Theorem \ref{general-complete-reducibility}  and Lemma \ref{kriterij} we  can prove the semi-simplicity of $KL_k$  for all collapsing levels not accounted   by
Theorem \ref{thm-classification-unique-introd}. We list here only non-admissible levels, since in admissible case $KL_k$ is semi-simple by \cite{Araduke}.
 \begin{thm} \label{cases-semi-simple}
 The category $KL_k$ is semisimple in the following cases:
\begin{itemize}
\item[(1)]  $\g = D_{\ell}$, $\ell \ge 3$ and $k=-2$;
\item[(2)] $\g = B_{\ell}$, $\ell \ge 2$ and $k = -2$;
\item[(3)]  $\g = A_{\ell}$, $\ell \ge 2$ and $k =-1$;
\item [(4)] $\g = A_{2\ell-1}$, $\ell \ge 2$, $k =-\ell$;
\item[(5)]  $\g = D_{2 \ell-1}$, $\ell \ge 3$ and $k=-2\ell +3$;
\item[(6)] $\g = C_{\ell}$, $k = -1 -\ell /2$;
\item[(7)] $\g = E_{6}$, $k = -4$;
\item[(8)] $\g = E_{7}$, $k = -6$;
\item[(9)] $\g = F_{4}$, $k = -3$.
\end{itemize}
\end{thm}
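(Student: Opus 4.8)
The plan is to verify, case by case, the hypothesis \eqref{asum-hwm} of Theorem \ref{general-complete-reducibility} and then invoke that theorem (together with Theorem \ref{rational} where the relevant $\mathcal W$--algebra is rational). The unifying principle is Lemma \ref{kriterij}: it suffices to show that for each listed pair $(\g,k)$ and each non-zero highest weight $V_k(\g)$--module $U$ in $KL_k$, the reduction $H_\theta(U)$ is a non-zero irreducible $W_k(\g,\theta)$--module. Since every $k$ in the list is a collapsing level (these are exactly the entries of Table 5 with $\g$ a Lie algebra, outside the admissible range and outside Theorem \ref{thm-classification-unique-introd}), we have $W_k(\g,\theta)=\mathcal V_k(\g^\natural)=V_{k'}(\g^\natural_i)$ for the single component $\g^\natural_i$ with $k_i\ne0$ — or $W_k(\g,\theta)=\C$ or $M(1)$ in the degenerate cases. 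So the problem reduces to understanding highest weight modules over these much smaller affine (or Weyl, or trivial) vertex algebras.

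First I would dispose of the cases where $k'$ is such that $V_{k'}(\g^\natural_i)$ is rational or admissible, so that every non-zero highest weight module over it is automatically irreducible: for instance $\g=A_\ell$, $k=-1$ gives $\mathcal V_{-1}(\g^\natural)=M(1)$ (case (3)); $\g=D_\ell$, $k=-2$ and $\g=B_\ell$, $k=-2$ give $V_{k'}(sl(2))$ with $k'$ read off Table 5; $\g=C_\ell$, $k=-1-\ell/2$ gives a trivial or small target; and the exceptional cases $E_6,E_7,F_4$ at $k=-4,-6,-3$ land on $V_{-1}(sl(6))$, $V_{-2}(so(12))$, $V_{-1/2}(sp(6))$ respectively, whose module categories in $KL$ are controlled by earlier work (and in the $F_4$ case the target is $V_{-1/2}(C_3)$, a Weyl-algebra orbifold, handled via Section on the Weyl vertex algebra). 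In each such case I would argue: $U$ highest weight in $KL_k$ forces $\lambda(\alpha_0^\vee)\notin\Z_{\ge0}$ by Remark \ref{rm}, hence by Theorem \ref{A}(2) $H_\theta(U)\ne0$; by exactness of $H_\theta$ (Theorem \ref{A}(1)) $H_\theta(U)$ is a quotient of the Verma module of $W_k(\g,\theta)$ with the induced highest weight, hence a highest weight module over $V_{k'}(\g^\natural_i)$; since that algebra is rational (or since its relevant category is semisimple), $H_\theta(U)$ is irreducible. Then Lemma \ref{kriterij} gives \eqref{asum-hwm}, and Theorem \ref{general-complete-reducibility} (whose rationality hypothesis $k\in\Q$, $k>-h^\vee$ must be checked — it holds for every entry of the list) yields semisimplicity of $KL_k$.

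The remaining cases — $\g=A_{2\ell-1}$ at $k=-\ell$, and $\g=D_{2\ell-1}$ at $k=-2\ell+3$ — require identifying $W_k(\g,\theta)$ explicitly from Table 5 and recognizing it as a vertex algebra whose highest weight modules in the relevant category are known to be irreducible; for $A_{2\ell-1}$, $k=-\ell$ the collapsing target is $V_{k'}(sl(2\ell-3))$ with $k'=-\ell+1$, so one needs an inductive argument in $\ell$ exactly parallel to the induction in the proof of Theorem \ref{thm-classification-unique} part (2): assuming the result for the smaller rank $A_{2(\ell-1)-1}$ at level $-(\ell-1)$, deduce it for rank $A_{2\ell-1}$. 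I would set this up so that the base case is an admissible level (covered by \cite{Araduke}) and each inductive step uses Lemma \ref{kriterij} with the induction hypothesis supplying irreducibility of highest weight modules over the target $V_{k'}$. I expect the main obstacle to be precisely this bookkeeping for the $A$ and $D$ families: one must make sure the induction stays inside the hypotheses of Theorem \ref{general-complete-reducibility} (rationality of $k$ and $k>-h^\vee$ at every stage), correctly track which component $\g^\natural_i$ survives and what $k_i$ is via \eqref{ki}, and handle the small-rank boundary cases by hand. The cohomological input ($\mathrm{Ext}^1$-vanishing from \cite{GK}, \cite{DGK}) is already packaged inside Theorem \ref{general-complete-reducibility}, so no new homological algebra is needed here.
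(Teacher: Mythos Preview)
Your overall strategy matches the paper's proof exactly: apply Lemma \ref{kriterij} case by case, identifying $W_k(\g,\theta)$ via the collapsing level tables, and feed the conclusion into Theorem \ref{general-complete-reducibility}; cases (7)--(9) reduce to earlier items in the list, and (4), (5) are handled by induction on the rank.

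One concrete slip: in case (6), $\g=C_\ell$, $k=-1-\ell/2$, the target is not ``trivial or small'' --- from Table~5 (row $spo(n|m)$, $n\ge4$) one gets $W_k(\g,\theta)\cong V_{-1/2-\ell/2}(C_{\ell-1})$, so this case is inductive in exactly the same way as (4) and (5), with base $\ell=2$ landing on $V_{-3/2}(sl(2))$. Also, the base cases of your inductions (e.g.\ $V_{-1}(sl(2))$ for (4), $V_{-3/2}(sl(2))$ for (6)) are not admissible levels in the technical sense, so you cannot appeal to \cite{Araduke} there; the paper instead uses directly that every highest weight module in $KL$ for these rank-one algebras is irreducible. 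These are bookkeeping corrections rather than a change of method.
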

\begin{proof} 

 We will give a proof of relations (1) and (2) in Corollaries \ref{semi-simple-d} and \ref{semi-simple-b}, respectively. Case (1) for $\ell\ne 3$ will follow from Theorem \ref{rational}. Note also that case (1) for $\ell=3$ is a special case of case (4), and that case (2) for $\ell=2$ is a special case of (6).  
 The proof in cases (3) -- (6)  is similar, and it uses  the classification of irreducible modules from \cite{AP08}, \cite{AP}, \cite{AM1} and the  results on collapsing levels \cite{AKMPP-JA}. Cases (7) -- (9) are reduced to cases we have already treated. Here are some details.

Case (3):
\begin{itemize}
\item \cite{AM1}, \cite{AKMPP-JA}  $H_{\theta} (V_{-1} (A_{\ell}) )$ is isomorphic to the Heisenberg vertex algebra $M(1)$ of central charge $c=1$
\item By using the fact that every  highest weight $M(1)$--module  is irreducible,  we see  that if $U$ is a highest weight $V_{-1} (A_{\ell})$--module  in $KL_{-1}$, then $H_{\theta} (U)$ is a non-trivial  irreducible $M(1)$--module.
\end{itemize}

 Case (4):
\begin{itemize}
\item \cite{AM1}, \cite{AKMPP-JA} $H_{\theta} (V_{-\ell } (A_{2 \ell-1}))= V_{-\ell+1 } (A_{2 \ell-3}) $.
\item For $\ell = 2$, we have that every  highest weight $V_{-\ell+1 } (A_{2 \ell-3})  = V_{-1} (sl(2))$--module $\widetilde L (\lambda)$ in $KL_{-1}$  with  highest weight $ \lambda= -(1+j) \Lambda_0 +  j \Lambda_1$, $j \in {\Z}_{\ge 0}$, is irreducible.
\item By induction, we see that for every highest weight $V_{-\ell } (A_{2\ell-1}) $--module  $U$ in $KL_{-\ell }$, $H_{\theta} (U)$ is a non-trivial  irreducible $V_{-\ell+1 } (A_{2 \ell-3}) $--module.
\end{itemize}

Case (5)
\begin{itemize}
\item $H_{\theta} (V_{-2\ell +3} (D_{2\ell-1})) \cong V_{-2\ell +5} (D_{2\ell-3}) $.
 \item By induction we see that for  or every highest weight $V_{-2\ell +3} (D_{2\ell-1})$--module  $U$  in $KL_{-2 \ell +3 }$ , $H_{\theta} (U)$ is a non-trivial  irreducible  $V_{-2\ell +5} (D_{2\ell-3}) $--module.
\end{itemize}

Case (6)
\begin{itemize}
\item   $H_{\theta} ( V_{-1- \ell /2 } (C_{\ell}  )   ) \cong  V_{-1/2  - \ell/ 2} (C_{\ell-1}) $.
\item For $\ell = 2$, we have that every  highest weight $V_{-1/2  - \ell/ 2} (C_{\ell-1})   = V_{-3/2} (sl(2))$--module in $KL_{-3/2}$ is irreducible.
\item  By induction, we see that for   every highest weight $V_{-1- \ell /2 } (C_{\ell}) $--module  $U$  in $KL_{-1 - \ell /2  }$, $H_{\theta} (U)$ is a non-trivial  irreducible $V_{-1/2  - \ell/ 2} (C_{\ell-1})$--module.
\end{itemize}
The proof follows by  applying Theorem \ref{general-complete-reducibility}  and Lemma \ref{kriterij}.
\vskip10pt
Cases (7) -- (8)
\vskip10pt
We have
$$H_\theta(V_{-4}(E_6))= V_{-1}(A_3),\quad
H_\theta(V_{-6}(E_7))= V_{-2}(D_6),\quad
$$
and these cases are settled in (3) and  Theorem \ref{thm-classification-unique-introd} (3) respectively. Case (9) follows from the fact that  $H_\theta(V_{-3}(F_4))$ is isomorphic to the admissible affine vertex algebra $V_{-\frac{1}{2}}(C_3)$ which is semisimple in $KL_{-1/2}$ (cf. \cite{A-1994}).\end{proof}
\begin{rem}
 The problem of complete-reducibility of modules in  $KL_k$ when $\g$ is a Lie superalgebra will be also studied in \cite{AKMPP-work-in-progress}. An important tool in the description of the category  $KL_k$ will be the conformal embedding of $\widetilde V_{k} (\g_0)$ to $V_k(\g)$ where $\g_0$ is the even part of $\g$.
\end{rem}
Note that in the category $\mathcal O$ we can have indecomposable $V_k(\g)$--modules in some cases listed in Theorem \ref{cases-semi-simple}. See \cite[Remark 5.8]{AP08} for one example.

\section{ The vertex algebra $V^{-2} (D_{\ell} ) $ and its quotients  }

In this section  we exploit Hamiltonian reduction and the results on conformal embeddings from \cite{AKMPP-JA} to
 investigate the quotients of the vertex algebra $V^{-2} (D_{\ell} ) $. In particular we are interested in a  non-simple quotient $\mathcal V_{-2} (D_{\ell})$ which appears in  the analysis of certain dual pairs (see \cite{AKMPP-new}) as well as in the simple quotient $V_{-2} (D_{\ell} )$.
We will show that the vertex algebra $\mathcal V_{-2} (D_{\ell})$ has infinitely many irreducible modules in the category $KL^{-2}$,
while by \cite{AM}, $V_{-2} (D_{\ell} )$ has finitely many irreducible modules in $KL_{-2}$. Recall that $-2$ is a collapsing level
for $D_{\ell}$ \cite{AKMPP-JA}.

Consider the vector
\begin{equation} \label{sing-D4-1}
w_1 := (e_{\epsilon_1 + \epsilon_2}(-1) e_{\epsilon_3 + \epsilon_4}(-1)
- e_{\epsilon_1 + \epsilon_3}(-1) e_{\epsilon_2 + \epsilon_4}(-1)
 + e_{\epsilon_1 + \epsilon_4}(-1)
e_{\epsilon_2 + \epsilon_3}(-1) ) {\vac }  .
\end{equation}
It is a singular vector in
$V^{-2}(D_{\ell})$ (cf. \cite{AM}). Note that this vector is contained in the subalgebra
$V^{-2}(D_4)$ of $V^{-2}(D_{\ell})$.

By using the explicit  expression    for singular vectors $v_n$ in $V^ { n - \ell +1 } ( D_{ \ell} )  $  (see   \eqref{singvect-D-old}),  we have that
\begin{equation}\label{w2} w_2 :=v_{\ell -3} = \Big(\sum _{i=2}^{ \ell} e_{\epsilon_1 - \epsilon_i}(-1)
e_{\epsilon_1 + \epsilon_i}(-1)\Big) ^{\ell -3} {\bf 1} \end{equation}
is a singular vector in $V^ { -2 } ( D_{ \ell} )  $.

For $\ell = 4$ we also have  a third   singular vector (cf. \cite{P})
$$w_3 := (e_{\epsilon_1 + \epsilon_2}(-1) e_{\epsilon_3 - \epsilon_4}(-1)
- e_{\epsilon_1 + \epsilon_3}(-1) e_{\epsilon_2 - \epsilon_4}(-1)
 + e_{\epsilon_1 - \epsilon_4}(-1)
e_{\epsilon_2 + \epsilon_3}(-1) ) {\bf 1}.$$

\subsection{ The vertex algebra  $\mathcal V_{-2} (D_{\ell} )$ for $\ell \ge 4$  }

Define the  vertex algebra
\begin{equation}\label{v-2d}\mathcal  V_{-2} (D_{\ell}  ) = V^{-2} (D_{\ell } )\big/  J_{\ell},\end{equation}
where
\bea
 J_{\ell} =  {\langle w_1, w_3  \rangle}  \quad (\ell = 4),   \quad
 J_{\ell} =   {\langle w_1 \rangle} \quad (\ell \ge 5).\nonumber \eea
The following proposition is essentially proven in  \cite{AKMPP-new}.
\begin{prop}
 \item[(1)] There is a non-trivial vertex algebra homomorphism $\overline {\Phi} : \mathcal V_{-2} (D_{\ell} ) \rightarrow M_{2\ell}$ where $M_{2\ell}$ the Weyl vertex algebra of rank $\ell$.
 \item[(2)] $\mathcal  V_{-2} (D_{\ell} )$ is not simple, and  $L( (-2-t) \Lambda_0 + t \Lambda_1)$, $t \in {\Z_{\ge 0}}$ are
$\mathcal V_{-2} (D_{\ell})$--modules.
\end{prop}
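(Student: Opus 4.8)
The plan is to establish both parts by producing an explicit nontrivial map to the Weyl vertex algebra and then using its properties to see non-simplicity and to construct the claimed modules.

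\emph{Part (1).} First I would recall the well-known free-field realization in which $V^{-2}(D_\ell)$ maps to the Weyl vertex algebra $M_{2\ell}$ (the $\beta\gamma$-system of rank $\ell$), realizing $so(2\ell)$ at level $-2$ inside the $\mathfrak{gl}(\ell)$-action on $M_{2\ell}$; concretely one sends the generators $x(-1)\vac$, $x\in so(2\ell)$, to the standard quadratic expressions in the $a^\pm_i$. This gives a vertex algebra homomorphism $\Phi:V^{-2}(D_\ell)\to M_{2\ell}$, which is nontrivial. To descend it to $\mathcal V_{-2}(D_\ell)=V^{-2}(D_\ell)/J_\ell$ I must check that $\Phi$ kills the singular vectors generating $J_\ell$, i.e. $\Phi(w_1)=0$ (and, for $\ell=4$, also $\Phi(w_3)=0$). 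Since $w_1$ lives in the $V^{-2}(D_4)$-subalgebra and is a quadratic expression in root vectors attached to $\varepsilon_i+\varepsilon_j$, one computes $\Phi(w_1)$ as an explicit element of $M_8$; the three terms of $w_1$ are arranged precisely so that, after normal ordering the images of the relevant root vectors (which are monomials $a^+_i a^+_j$ or $a^-_ia^-_j$ up to signs), the sum telescopes to zero. This is exactly the Plücker-type cancellation that makes $w_1$ singular, and the same mechanism handles $w_3$. By universality of the quotient, $\Phi$ factors through $\overline\Phi:\mathcal V_{-2}(D_\ell)\to M_{2\ell}$, which is nontrivial since $\Phi$ is. I expect the main obstacle here to be the bookkeeping of signs and normal-ordering corrections in the computation $\Phi(w_i)=0$; this is routine but must be done carefully, and it is essentially already contained in \cite{AKMPP-new}.

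\emph{Part (2), non-simplicity.} Since $\overline\Phi$ is nontrivial, its image is a quotient of $\mathcal V_{-2}(D_\ell)$ sitting inside $M_{2\ell}$. But $M_{2\ell}$ (equivalently, the embedding $\widetilde V(-2, D_\ell)\subset M_{2\ell}$) is \emph{not} the simple affine vertex algebra $V_{-2}(D_\ell)$: recall that $V_{-1/2}(C_\ell)$ is the $\mathbb Z_2$-orbifold of $M_\ell$, and the subalgebra of $M_{2\ell}$ generated by $so(2\ell)$ at level $-2$ has further relations — in particular the singular vector $w_2$ of \eqref{w2} survives in $\mathcal V_{-2}(D_\ell)$ but is killed in $V_{-2}(D_\ell)$, so $\mathcal V_{-2}(D_\ell)\not\cong V_{-2}(D_\ell)$. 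Concretely, to see non-simplicity I would exhibit a proper nonzero ideal: the image of $w_2$ in $\mathcal V_{-2}(D_\ell)$ is nonzero (as $w_2\notin J_\ell$, which one checks by a weight/degree argument since $J_\ell$ is generated in conformal weight $2$, resp.\ $2$ and $2$, while $w_2$ has conformal weight $\ell-3$ and generates no lower-weight element of the right type — alternatively, $\overline\Phi(w_2)\neq 0$ in $M_{2\ell}$), yet $w_2$ generates a proper ideal because $\mathcal V_{-2}(D_\ell)$ is still $\mathbb Z_{\geq 0}$-graded with one-dimensional degree-zero part. Hence $\mathcal V_{-2}(D_\ell)$ is not simple.

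\emph{Part (2), the modules.} Finally, to show that $L((-2-t)\Lambda_0+t\Lambda_1)$, $t\in\mathbb Z_{\geq0}$, are $\mathcal V_{-2}(D_\ell)$-modules, it suffices to check that the generators of $J_\ell$ act by zero on these highest weight modules; equivalently, using Zhu-algebra theory, that the images of $w_1$ (and $w_3$ when $\ell=4$) in the Zhu algebra $A(V^{-2}(D_\ell))$ annihilate the corresponding finite-dimensional $so(2\ell)$-modules $V(t\omega_1)$. Since $w_1$ lies in the $D_4$-subalgebra and is built from root vectors for $\varepsilon_i\pm\varepsilon_j$ with all $i,j\le 4$, its Zhu image is a specific element of $U(so(2\ell))$ of degree $2$; one verifies it acts as zero on each $V(t\omega_1)$ by a direct highest-weight computation (the relevant matrix coefficients vanish for the standard and its symmetric powers). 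The cleanest route, which I would take, is instead to invoke the free-field realization: $\overline\Phi$ gives $\mathcal V_{-2}(D_\ell)$-module structures by pullback along $\overline\Phi$ on the natural $M_{2\ell}$-modules, and the modules $L((-2-t)\Lambda_0+t\Lambda_1)$ arise inside (twisted modules for) $M_{2\ell}$ — this is precisely the content borrowed from \cite{AKMPP-new}, so one reduces Part (2) to the already-established module theory of the Weyl vertex algebra. The main obstacle is matching the highest weights: one must verify that the weight of the $M_{2\ell}$-module in question restricts to $(-2-t)\Lambda_0+t\Lambda_1$ under $\overline\Phi$, which is a direct computation with the Sugawara operator $L_{\mathfrak g}(0)$ and the Cartan action.
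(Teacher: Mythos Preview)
Your treatment of Part~(1) and of the module construction in Part~(2) matches the paper's: both use the free-field map $\Phi:V^{-2}(D_\ell)\to M_{2\ell}$ from \cite{AKMPP-new}, verify $\Phi(w_1)=0$ (and $\Phi(w_3)=0$ for $\ell=4$) by direct calculation to obtain $\overline\Phi$, and then invoke \cite[Lemma~7.1]{AKMPP-new} to realize the $L((-2-t)\Lambda_0+t\Lambda_1)$ as $\mathcal V_{-2}(D_\ell)$--modules.

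The non-simplicity argument, however, differs, and yours has a gap. The paper reverses your order: it \emph{first} constructs the infinitely many pairwise inequivalent irreducibles $L((-2-t)\Lambda_0+t\Lambda_1)$, $t\in\Z_{\ge 0}$, in category $\mathcal O$, and \emph{then} notes that by \cite{AM} the simple quotient $V_{-2}(D_\ell)$ has only finitely many irreducibles there; hence $\mathcal V_{-2}(D_\ell)\ne V_{-2}(D_\ell)$ and non-simplicity follows with no further work.

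Your route instead tries to exhibit a proper ideal via $w_2$. The ``weight/degree'' justification that $w_2\notin J_\ell$ is invalid: an ideal generated by conformal-weight-$2$ singular vectors contains elements of all higher conformal weights, so there is no obstruction on those grounds (and incidentally $w_2$ has conformal weight $2(\ell-3)$, not $\ell-3$). Your alternative, $\overline\Phi(w_2)\ne 0$, is in fact true, but it is not established at this stage of the argument; in the paper the injectivity of $\overline\Phi$ is obtained only later, as a \emph{consequence} of the classification of ideals in $\mathcal V_{-2}(D_\ell)$ via Hamiltonian reduction, not by any direct computation of $\Phi(w_2)$. So as written your non-simplicity argument is incomplete, whereas the paper's ordering gives it for free once the modules are in hand.
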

\begin{proof}
The homomorphism $\Phi: V^{-2} (D_{\ell}) \rightarrow M_{2 \ell}$ was constructed in \cite[Section 7]{AKMPP-new}.  By direct calculation one proves that $\Phi( w_1) =  0$  for $\ell \ge 4$ and $\Phi(w_3) = 0$ for $\ell = 4$. Finally
  \cite[Lemma 7.1]{AKMPP-new}  implies that $L( (-2-t) \Lambda_0 + t \Lambda_1)$, $t \in {\Z_{\ge 0}}$ are
$\mathcal V_{-2} (D_{\ell})$--modules. Since the simple vertex algebra $V_{-2} (D_{\ell})$ has only finitely many irreducible modules in the category $\mathcal{O}$ \cite{AM}, we have that  $\mathcal V_{-2} (D_{\ell})$  is not simple.
\end{proof}
 Next, we exploit the fact that  in the case $\g= D_{\ell}$, $k=-2$ is a collapsing level, i.e., in the affine $W$-algebra $ W^k(\g, \theta)$, all generators $G^{\{ u \}}$ at conformal weight $3/2$, $u \in {\g}_{-1/2}$, belong to the maximal ideal  (see \cite{AKMPP-JA} for details). This implies that there exists a non-trivial ideal $I$ in $V^{-2} (\g)$  such that $ G^{\{ u \}}\in H_{\theta} (I)$ for all  $u \in {\g}_{-1/2}$.

Note also that   $\g ^{\natural} = A_1 \oplus D_{\ell -2}$, so
we have that $V^{\ell -4} (A_1) \otimes V^{0} (D_{\ell -2} )$ is a subalgebra of
$ W^{-2}(D_{\ell}, \theta)$.
 In the case $\ell = 4$ we identify $D_2$ with $A_1 \oplus A_1$.
\begin{lem} \label{collapsing-d}
We have
\begin{itemize}
\item  $ x_{ (-1)}  {\bf 1} \in  H_{\theta} (J_{\ell} )$ for all $x \in D_{\ell-2} \subset {\g}  ^{\natural} $,
\item  $ G^{\{ u \}}\in H_{\theta} (J_{\ell} )$ for all $u \in {\g}_{-1/2}$.
\end{itemize}
\end{lem}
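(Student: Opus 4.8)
The plan is to compute the image of the singular vectors $w_1$ (and $w_3$ when $\ell=4$) under the Hamiltonian reduction functor $H_\theta$ and to identify explicitly which generators of $W^{-2}(D_\ell,\theta)$ they produce. First I would set up the reduction explicitly: recall from \cite{KW2} that $H_\theta$ sends $x_{(-1)}\mathbf 1$, for $x\in\g^\natural$, to the generator $J^{\{x\}}$, and that a field of the form $e_{\epsilon_1+\epsilon_i}(-1)\mathbf 1$ with $\epsilon_1+\epsilon_i\in\D_{1/2}$ reduces (up to lower-order corrections coming from the BRST differential) to $G^{\{v\}}$ for the corresponding $v\in\g_{-1/2}$, while $e_\theta(-1)\mathbf 1$ is killed. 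The key combinatorial input is the decomposition of the relevant roots of $D_\ell$ under the $\frac12\Z$-grading attached to $\theta=\epsilon_1+\epsilon_2$: the roots $\epsilon_1\pm\epsilon_i$ and $\epsilon_2\pm\epsilon_i$ ($i\ge3$) lie in $\g_{\pm1/2}$, the roots $\epsilon_i\pm\epsilon_j$ ($i,j\ge3$) lie in $\g_0\cap D_{\ell-2}$, and $\pm\theta$ spans $\g_{\pm1}$.

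Next I would feed $w_2=v_{\ell-3}$ through $H_\theta$ as a sanity check on conventions (it should reduce, after normalization, to a power of $J^{\{x\}}$ times the conformal vector, hence lie in the affine subalgebra part), and then focus on $w_1$. Writing $w_1$ in terms of the grading: $e_{\epsilon_1+\epsilon_2}(-1)$ reduces to (a multiple of) the identity since $\epsilon_1+\epsilon_2=\theta$; $e_{\epsilon_3+\epsilon_4}(-1)$ reduces to $J^{\{x\}}$-type fields in $D_{\ell-2}$; the cross terms $e_{\epsilon_1+\epsilon_3}(-1)e_{\epsilon_2+\epsilon_4}(-1)$ and $e_{\epsilon_1+\epsilon_4}(-1)e_{\epsilon_2+\epsilon_3}(-1)$ involve products of two weight-$1/2$ fields, hence reduce to quadratic expressions in the $G^{\{u\}}$, $u\in\g_{-1/2}$. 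Collecting terms, $H_\theta(w_1)$ becomes an explicit element of $W^{-2}(D_\ell,\theta)$ which, on the nose, expresses a linear combination of $x_{(-1)}\mathbf 1$, $x\in D_{\ell-2}$, in terms of normally ordered products of the $G^{\{u\}}$. Since $H_\theta(J_\ell)$ is an ideal of $W^{-2}(D_\ell,\theta)$ containing $H_\theta(w_1)$, and since at the collapsing level $k=-2$ all the $G^{\{u\}}$ already lie in the maximal ideal (this is exactly the statement that $-2$ is collapsing for $D_\ell$, cf. \cite{AKMPP-JA}, and is recalled in the paragraph preceding the Lemma), the ideal generated by $H_\theta(w_1)$ together with the $G^{\{u\}}$ contains all of $D_{\ell-2}$. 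Conversely one must check that the $G^{\{u\}}$ themselves lie in $H_\theta(J_\ell)$: for $\ell\ge5$ this follows because acting on $H_\theta(w_1)$ by the $D_{\ell-2}$-currents and by $L(0)$-modes, together with the fact that $\g_{-1/2}$ is an irreducible $\g^\natural$-module, generates all $G^{\{u\}}$; for $\ell=4$, where $\g_{-1/2}$ is not $\g^\natural$-irreducible, I would use the extra singular vector $w_3$, whose reduction supplies the missing components, to conclude the same.

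The main obstacle I anticipate is the bookkeeping of the BRST correction terms in the reduction formula for $G^{\{v\}}$ and of the normal-ordering constants: the map $e_{\alpha}(-1)\mathbf 1\mapsto G^{\{v\}}$ is only a leading-order statement, with corrections quadratic in the $\g^\natural$-currents, so one must verify that these corrections do not conspire to cancel the $D_{\ell-2}$-part of $H_\theta(w_1)$. A clean way around this is to argue at the level of the simple quotients / Zhu algebras rather than chasing cocycle representatives: since $-2$ is collapsing, $W_{-2}(D_\ell,\theta)=\mathcal V_{-2}(\g^\natural)$ and by Theorem \ref{TT} the $D_{\ell-2}$-factor collapses as well (its level is $k_i=0$), so in $W_{-2}(D_\ell,\theta)$ both $\sum_{i} x_{(-1)}\mathbf 1$ for $x\in D_{\ell-2}$ and all $G^{\{u\}}$ are forced to vanish; it then suffices to know that $H_\theta(J_\ell)$ is large enough to be the full maximal ideal, which follows from Theorem \ref{thm-classification-unique} (or its direct counterpart Theorem \ref{class-d-direct}) identifying $\mathcal V_{-2}(D_\ell)/(\text{further singular vectors})$ with $V_{-2}(D_\ell)$ and hence $H_\theta$ of it with $W_{-2}(D_\ell,\theta)=\C$ on the relevant components. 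I would present the first, explicit-computation route as the main argument and indicate the second as the conceptual shortcut.
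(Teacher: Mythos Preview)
Your plan diverges from the paper's proof and contains a genuine gap in each of the two routes you sketch.

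The paper never computes $H_\theta(w_1)$ at the cocycle level. It argues abstractly: $J_\ell$ is a highest weight $\widehat\g$--module with highest weight $-2\Lambda_0+\epsilon_1+\epsilon_2+\epsilon_3+\epsilon_4$, so by Arakawa's theorem $H_\theta(J_\ell)$ is a nonzero highest weight $W^{-2}(D_\ell,\theta)$--module. Formulas (6.14) of \cite{KW2} and \eqref{lowest-conf-weight-W} give its highest weight vector $\g^\natural$--weight $(0,\omega_2)$ and conformal weight $1$; up to scalar the only such vector in $W^{-2}(D_\ell,\theta)$ is $J^{\{e_{\epsilon_3+\epsilon_4}\}}_{(-1)}\mathbf 1$. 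This yields the first bullet. The second then follows from the first, not the other way around: once $J^{\{x\}}\in H_\theta(J_\ell)$ for all $x\in D_{\ell-2}$, one computes $J^{\{x\}}_{(0)}G^{\{u\}}=G^{\{[x,u]\}}\in H_\theta(J_\ell)$, and since $\g_{-1/2}\cong\C^2\otimes\C^{2\ell-4}$ has no $D_{\ell-2}$--invariants, every $G^{\{u\}}$ is reached.

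Your explicit computation misidentifies the image. The highest weight vector of $H_\theta(J_\ell)$ has conformal weight $1$, so it cannot contain a normally ordered product $:G^{\{u\}}G^{\{v\}}:$ (those have weight $3$). The cross terms in $w_1$ involve two $\g_{1/2}$--root vectors, not $\g_{-1/2}$, and their BRST class is again a weight--$1$ element built from $J^{\{a\}}$'s, not from $G$'s. Consequently your mechanism for producing the $G^{\{u\}}$ --- ``act on $H_\theta(w_1)$ by $D_{\ell-2}$--currents'' --- only produces further $J^{\{a\}}$'s, never a $G^{\{u\}}$; the order of deduction must be the paper's.

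Your conceptual shortcut is also incorrect. The ideal $J_\ell$ is \emph{strictly smaller} than the maximal ideal of $V^{-2}(D_\ell)$ --- this is precisely the content of the section, since $\mathcal V_{-2}(D_\ell)=V^{-2}(D_\ell)/J_\ell$ is not simple and $H_\theta(\mathcal V_{-2}(D_\ell))=V^{\ell-4}(A_1)$ is the \emph{universal}, not simple, affine algebra. So $H_\theta(J_\ell)$ is not the maximal ideal of $W^{-2}(D_\ell,\theta)$, and you cannot deduce membership in $H_\theta(J_\ell)$ from membership in the maximal ideal. Moreover, Theorems \ref{thm-classification-unique} and \ref{class-d-direct} concern $D_{2m}$ at level $2-2m$, which equals $-2$ only for $D_4$; for $\ell\ge5$ they simply do not apply to $D_\ell$ at level $-2$, and for $\ell=4$ they still do not identify $J_4$ with the maximal ideal (one needs the extra singular vector $w_2$).
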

\begin{proof}
Assume that $\ell \ge 5$.
Since $w_1$ is a singular vector in $V^{-2} (D_{\ell} )$, the ideal
$J_{\ell} $  is a highest weight module  of highest weight $\lambda= -2 \Lambda _0 + \epsilon_1 + \epsilon_2 + \epsilon_3 + \epsilon_4$.
Now, the Main Theorem from \cite{Araduke} implies that $H_{\theta}(J_\ell)$ is a non-trivial highest weight module. By formula \cite[(6.14)]{KW2} the highest weight is $(0 , \omega_2)$ and, by \eqref{lowest-conf-weight-W}, the conformal weight of its highest weight vector is $1$.  Up to a non-zero constant, there is only one vector in $W^{-2}(D_{\ell}, \theta)=V^{\ell -4} (A_1) \otimes V^{0} (D_{\ell -2} )$ that has these properties, namely %
$J^{ \{ e_{\epsilon_3 + \epsilon_4} \} }_{(-1)}  {\bf 1}, $
and therefore   $H_{\theta}(J_{\ell} )$  contains all generators of  $V^{0} (D_{\ell -2} )$.

In the case $\ell = 4$,  $w_1$ and  $w_3$ generate   submodules $N_1$ and  $N_3$ of highest weights  $\lambda_1= -2 \Lambda _0 + \epsilon_1 + \epsilon_2 + \epsilon_3 + \epsilon_4$ ,  $ \lambda_3= -2 \Lambda _0 + \epsilon_1 + \epsilon_2 + \epsilon_3 - \epsilon_4$, respectively. Applying the same arguments as above we get that
$  J^{ \{ e_{\epsilon_3 \pm  \epsilon_4} \} }_{(-1)}  {\bf 1} \in H_{\theta} (I), $
which implies that $H_{\theta}(J_{\ell} )$  contains all generators of  $V^{0} (D_{2} ) = V^{0} (A_1) \otimes  V^{0} (A_1) $.

Now, claim  follows by applying the action of generators of  $V^{0} (D_{\ell -2} )$  to  $ G^{\{ u \}} $ (see \cite{AKMPP-JA}).
\end{proof}
\begin{prop} \label{clas-2-intermediate}
We have
\item[(1)] $H_{\theta} ( \mathcal V_{-2} (D_{\ell}) ) = V^{\ell -4}(A_1)$.
\item[(2)] $ H_{\theta}( L( (-2-t) \Lambda_0 + t \Lambda_1) ) \cong L_{A_1}( (\ell-4-t) \Lambda_0 + t \Lambda_1 ), \ t \in {\Z_{\ge 0}}.$
\item[(3)] The set
$\{ L( (-2-t) \Lambda_0 + t \Lambda_1) \ \vert \ t \in {\Z_{\ge 0}} \}$
provides a complete list of irreducible  $\mathcal V_{-2} (D_{\ell})$--modules from the category $KL^ {-2}$.
\end{prop}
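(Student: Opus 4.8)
The plan is to deduce all three statements from the Hamiltonian reduction machinery of Section \ref{ohr} (Theorem \ref{A}), combined with Lemma \ref{collapsing-d}, which tells us that the ideal generated by $w_1$ (and $w_3$ when $\ell=4$) becomes, after reduction, the full ideal of $W^{-2}(D_\ell,\theta)=V^{\ell-4}(A_1)\otimes V^0(D_{\ell-2})$ generated by the generators $G^{\{u\}}$ ($u\in\g_{-1/2}$) together with all of $V^0(D_{\ell-2})$. For part (1), I would combine Lemma \ref{collapsing-d} with the fact that $-2$ is a collapsing level for $D_\ell$ (Theorem \ref{TT}): the $\mathcal W$-algebra $W^{-2}(D_\ell,\theta)$ is strongly generated by $\omega$, the $J^{\{a\}}$ ($a\in\g^\natural=A_1\oplus D_{\ell-2}$), and the $G^{\{u\}}$; Lemma \ref{collapsing-d} shows $H_\theta(J_\ell)$ contains all $G^{\{u\}}$ and all $J^{\{a\}}$ with $a\in D_{\ell-2}$. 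Since $H_\theta$ is exact, $H_\theta(\mathcal V_{-2}(D_\ell))=H_\theta(V^{-2}(D_\ell))/H_\theta(J_\ell)=W^{-2}(D_\ell,\theta)/H_\theta(J_\ell)$, and the surviving subalgebra is precisely the one generated by $\omega$ and the $J^{\{a\}}$, $a\in A_1$ — i.e. $V^{\ell-4}(A_1)$. One must check that $H_\theta(J_\ell)$ is not larger, e.g. that it does not meet the Heisenberg/$A_1$ part; here the highest-weight computation in the proof of Lemma \ref{collapsing-d} (the highest weight of $H_\theta(J_\ell)$ is $(0,\omega_2)$, lying in the $D_{\ell-2}$-factor) does the job, since a proper ideal of $V^{\ell-4}(A_1)\otimes V^0(D_{\ell-2})$ containing no $A_1$-generators must be contained in $V^{\ell-4}(A_1)\otimes(\text{ideal of }V^0(D_{\ell-2}))$, and once it contains the vacuum of the $D_{\ell-2}$-factor's maximal ideal it equals $V^{\ell-4}(A_1)\otimes I_{D_{\ell-2}}$, whose quotient is exactly $V^{\ell-4}(A_1)$.

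For part (2), I would apply Theorem \ref{A}(2) directly to $L((-2-t)\Lambda_0+t\Lambda_1)$. This is an irreducible highest weight $\widehat{D_\ell}$-module; one computes $\lambda(\alpha_0^\vee)$ and checks it is not in $\Z_{\ge0}$ (it equals $-2-t-(\text{value of }\theta^\vee)=-t-\ldots$, a negative integer since $k=-2<0$ and $t\ge0$ — here Remark \ref{rm} is the conceptual shortcut, as these modules lie in $KL^{-2}$). Hence $H_\theta$ does not kill it and produces the irreducible $W^{-2}(D_\ell,\theta)$-module with highest weight $\phi_\lambda$ from \cite[(67)]{Araduke}. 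Since $\mathcal V_{-2}(D_\ell)$-modules map to $V^{\ell-4}(A_1)$-modules under $H_\theta$ by part (1), the image is an irreducible highest weight $V^{\ell-4}(A_1)$-module; its highest weight is read off via \cite[(6.14)]{KW2}: the $\h^\natural$-component of $\mu=(-2-t)\omega_0+t\omega_1$ restricted to the $A_1$-factor is $t\alpha_1$ (using $\theta$ kills $\h^\natural$), and the level is $k_1=-2+\tfrac12(h^\vee-h^\vee_{0,1})=\ell-4$; the conformal weight is computed by \eqref{lowest-conf-weight-W}. This identifies the image as $L_{A_1}((\ell-4-t)\Lambda_0+t\Lambda_1)$. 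I expect this weight-bookkeeping to be the main technical nuisance but entirely routine given the formulas already set up in the excerpt.

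For part (3), the strategy is: given an irreducible $\mathcal V_{-2}(D_\ell)$-module $M$ in $KL^{-2}$, by Proposition \ref{fd} it is $\g$-locally finite, so by Proposition \ref{zhu-int} it corresponds to a finite-dimensional irreducible $A(\mathcal V_{-2}(D_\ell))$-module, i.e. a finite-dimensional irreducible $D_\ell$-module annihilated by the relevant Zhu-image relations. Applying $A(-)$ to $w_1$ (and $w_3$), and using the classical fact that $A(V^{-2}(D_\ell))$ is a quotient of $U(D_\ell)$, the constraints force the highest weight $\bar\lambda$ to lie among $\{t\omega_1\mid t\in\Z_{\ge0}\}$ — this is exactly the content of \cite[Lemma 7.1]{AKMPP-new} used in the preceding Proposition, which I would invoke. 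Conversely parts (1)–(2), together with exactness of $H_\theta$ and the faithfulness/injectivity of $H_\theta$ on $KL$ modules over a collapsing level (no nonzero module in $KL^{-2}$ is killed, again by Remark \ref{rm}), show each of these weights does occur and gives a distinct module. The anticipated main obstacle is making rigorous the last point — that $H_\theta$ detects all irreducibles in $KL^{-2}$ for $\mathcal V_{-2}(D_\ell)$, i.e. that two non-isomorphic irreducibles cannot have isomorphic images and that every finite-dimensional irreducible $A(\mathcal V_{-2}(D_\ell))$-module really is of the form dictated by the singular vectors; I would handle this by the Zhu-algebra argument (any irreducible quotient of $A(\mathcal V_{-2}(D_\ell))$ is an irreducible $D_\ell$-module on which the images of $w_1,w_3$ act by zero, and \cite[Lemma 7.1]{AKMPP-new} pins these down), rather than by an injectivity claim about $H_\theta$ itself.
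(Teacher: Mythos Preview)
Your proposal for part (2) is fine, but parts (1) and (3) each contain a real gap, and in both cases the paper's argument takes a different route that you should adopt.

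\textbf{Part (1).} You argue that $H_\theta(J_\ell)$ is ``not larger'' than needed by reasoning about ideals in $V^{\ell-4}(A_1)\otimes V^0(D_{\ell-2})$. But $W^{-2}(D_\ell,\theta)$ is \emph{not} this tensor product; the tensor product is only a subalgebra, and $W^{-2}$ has the additional generators $G^{\{u\}}$ and $\omega$. So your ideal-in-a-tensor-product reasoning does not apply. Moreover, knowing the highest weight of $H_\theta(J_\ell)$ constrains only the top of that module, not how far down it reaches; nothing you have written excludes the possibility that $H_\theta(J_\ell)$ meets the $A_1$-part nontrivially. The paper's argument sidesteps this entirely: from Lemma~\ref{collapsing-d} the quotient $H_\theta(\mathcal V_{-2}(D_\ell))$ is generated by $x_{(-1)}\mathbf 1$, $x\in A_1$, hence is \emph{some} quotient of $V^{\ell-4}(A_1)$, so either $V^{\ell-4}(A_1)$ or $V_{\ell-4}(A_1)$. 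Then one uses the \emph{preceding} Proposition (via the Weyl algebra embedding) to produce infinitely many irreducible $\mathcal V_{-2}(D_\ell)$-modules $L((-2-t)\Lambda_0+t\Lambda_1)$; their reductions are infinitely many irreducibles for $H_\theta(\mathcal V_{-2}(D_\ell))$, which rules out the simple quotient $V_{\ell-4}(A_1)$. This simultaneously proves (1) and (2).

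\textbf{Part (3).} You invoke \cite[Lemma~7.1]{AKMPP-new} to force $\bar\lambda\in\{t\omega_1\}$. But as used in this paper, that lemma only shows the modules $L((-2-t)\Lambda_0+t\Lambda_1)$ \emph{exist} (they occur inside $M_{2\ell}$); it does not give the classification direction. The paper's proof instead runs through $H_\theta$: for an irreducible $L(k\Lambda_0+\mu)$ in $KL^{-2}$, its reduction is an irreducible $V^{\ell-4}(A_1)$-module $L_{A_1}((\ell-4-j)\Lambda_0+j\Lambda_1)$ for some $j\ge0$. This fixes the $\g^\natural$-component $\mu^\natural=j\omega_1$ (with zero $D_{\ell-2}$-part), so $\mu=j\omega_1+s\omega_2=(j+s)\epsilon_1+s\epsilon_2$ for some $s\in\Z_{\ge0}$. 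Matching the lowest conformal weight via \eqref{lowest-conf-weight-W} against the Sugawara value $\tfrac{j(j+2)}{4(\ell-2)}$ for $L_{A_1}$ yields $(s+j)(s+j+2)=j(j+2)$, forcing $s=0$. This is the missing computation; your Zhu-algebra approach could in principle replace it, but you would need to actually carry out the analysis of which finite-dimensional $D_\ell$-modules are annihilated by the Zhu images of $w_1$ (and $w_3$), not delegate it to a lemma that does not contain that statement.
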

\begin{proof}
By Lemma \ref{collapsing-d} we see that the vertex algebra  $H_{\theta} ( \mathcal V_{-2} (D_{\ell}) ) $ is generated only by $x_{(-1)} {\bf 1}$, $x \in A_1 \subset D_\ell^{\natural}$.
So there are only two possibilities: either   $H_{\theta} ( \mathcal V_{-2} (D_{\ell}) ) =  V^{\ell -4}(A_1) $ or $H_{\theta} ( \mathcal V_{-2} (D_{\ell}) ) =  V_{\ell -4}(A_1) $.
Moreover,  for every $t \in {\Z}_{\ge 0}$, $ H_{\theta}( L( (-2-t) \Lambda_0 + t \Lambda_1) ) $ must be the irreducible  $H_{\theta} ( \mathcal V_{-2} (D_{\ell}) ) $--module with highest weight $t \omega_1$ with respect to $A_1$. So  $ H_{\theta}( L( (-2-t) \Lambda_0 + t \Lambda_1) ) \cong L_{A_1}( (\ell-4-t) \Lambda_0 + t \Lambda_1 ), \ t \in {\Z_{\ge 0}}.$ Therefore, $H_{\theta} ( \mathcal V_{-2} (D_{\ell}) )$ contains infinitely  many irreducible modules, which gives that $H_{\theta} ( \mathcal V_{-2} (D_{\ell}) ) =  V^ {\ell -4}(A_1) $. In this way we have proved claims (1) and (2).

 Let us now prove claim (3).

 Assume that $L (k\Lambda _0 + \mu)$  ($ \mu \in P_+$, $k=-2$) is an irreducible $\mathcal V_{k}(D_{\ell})$--module in the category $KL^ {k}$.
Then $H_{\theta} (L (k\Lambda _0 + \mu))$ is a non-trivial irreducible $V^{\ell-4}(A_1)$--module. The  representation theory of $V^ {\ell-4}(A_1)$  implies that:
$$H_{\theta} (L (k\Lambda _0 + \mu)) = L_{A_1} ( ( \ell  - 4 - j ) \Lambda_0 + j \Lambda_1) \qquad \mbox{for} \  j  \in {\Z}_{\ge 0}. $$
Since $D_\ell^{\natural} = A_1 \times  D_{\ell -2} $, we conclude that
$ \mu ^{\natural} =  j \omega_1  $ and therefore, by \eqref{mumus},
$$ \mu = j \omega_1 + s \omega_2  =  (s + j ) \epsilon_1  + s \epsilon _2 \qquad (s \in {\Z}_{\ge 0} ). $$
By using the  action of $L(0) =\omega_0$ on the lowest component of $H_{\theta} (L (k\Lambda _0 + \mu))$  we get
\bea \frac{ (\mu + 2 \rho, \mu)} {2 (k+h^{\vee})} - \mu (x)   = \frac{ j (j+2)}{4 (\ell - 2)} \qquad ( x = \theta ^{\vee} / 2).\nonumber \eea
Since $ 2 (k + h^{\vee}) =  2 ( -2 + 2 \ell -2) = 4 (\ell - 2)$  and $\mu (x)   =  (2 s + j ) / 2  $ we get
$$  (\mu + 2 \rho, \mu) -  (h^{\vee}-2) ( 2 s + j)  = j (j+2). $$
By direct calculation we get
$$   (\mu + 2 \rho, \mu)   = (s+j ) ^2 + s^2 + h^{\vee}  (s+j) + (h^{\vee} -2) s,$$
which gives an equation:
\bea  && (s+j ) ^2 + s^2 + h^{\vee}  (s+j) + (h^{\vee} -2) s  -   (h^{\vee}-2) ( 2 s + j)  = j (j+2). \nonumber \\
\iff  &&    (s+j ) ^2 + s^2 + h^{\vee}  (s+j)  -   (h^{\vee}-2) ( s + j)  = j (j+2).  \nonumber  \\
\iff && (s+j)  (s+j+ 2)  = j (j+2) \nonumber  \\
\iff && s = 0  \quad \mbox{or} \quad s = - 2 j -2 \nonumber . \eea
Since $ \mu \in P_+$ we conclude that $s= 0$. Therefore $\mu = j \omega_1$ for certain $j \in {\Z}_{\ge 0}$. The proof of claim (3) is now complete.
\end{proof}

 \subsection{The simple vertex algebra $V_{-2}( D_{\ell})$ }

 Next we use the fact that the simple affine $W$-algebra $W_{-2} (D_{\ell}, \theta)$ is isomorphic to the simple affine vertex algebra $V_{\ell-4} (A_1)$, for $\ell \geq 4$.
 \begin{prop} \label{class-d}
 The set
$\{ L( (-2-j ) \Lambda_0 + j \Lambda_1) \ \vert \ j \in {\Z_{\ge 0}}, j \le {\ell}-4 \}$
provides a complete list of irreducible  $ V_{-2} (D_{\ell})$--modules from the category $KL_{-2}$.
 \end{prop}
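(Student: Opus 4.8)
The plan is to deduce the classification for the simple vertex algebra $V_{-2}(D_\ell)$ from the classification for the intermediate algebra $\mathcal V_{-2}(D_\ell)$ in Proposition~\ref{clas-2-intermediate}, by transporting the question through the Hamiltonian reduction functor $H_{\theta}$ to the \emph{rational} vertex algebra $V_{\ell-4}(A_1)$. The starting observation is that, since $V_{-2}(D_\ell)$ is a quotient of $\mathcal V_{-2}(D_\ell)$, any irreducible $V_{-2}(D_\ell)$--module in $KL_{-2}$ is in particular an irreducible $\mathcal V_{-2}(D_\ell)$--module in $KL^{-2}$, hence by Proposition~\ref{clas-2-intermediate}(3) it is isomorphic to $L((-2-j)\Lambda_0+j\Lambda_1)$ for some $j\in{\Z}_{\ge 0}$. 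So the only issue is to determine, among these, which ones are actually annihilated by the maximal ideal $\mathcal I$ of $\mathcal V_{-2}(D_\ell)$.

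Next I would identify $H_{\theta}(V_{-2}(D_\ell))$. Applying the exact functor $H_{\theta}$ to the sequence $0\to\mathcal I\to\mathcal V_{-2}(D_\ell)\to V_{-2}(D_\ell)\to 0$ and using $H_{\theta}(\mathcal V_{-2}(D_\ell))=V^{\ell-4}(A_1)$ from Proposition~\ref{clas-2-intermediate}(1), one gets $H_{\theta}(V_{-2}(D_\ell))\cong V^{\ell-4}(A_1)/H_{\theta}(\mathcal I)$; it is nonzero because $(-2\Lambda_0)(\alpha_0^{\vee})=-2\notin{\Z}_{\ge 0}$ (Theorem~\ref{A}). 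A direct computation in the style of Lemma~\ref{collapsing-d} — noting that the $A_1$--component of $D_\ell^{\natural}$ is generated by $e_{\pm(\epsilon_1-\epsilon_2)}$ and that all $G^{\{u\}}$ lie in $H_{\theta}(J_\ell)$ — shows that $H_{\theta}(w_2)$ is a nonzero multiple of the singular vector $e_{\epsilon_1-\epsilon_2}(-1)^{\ell-3}{\bf 1}$ of $V^{\ell-4}(A_1)$ (in particular $w_2\notin J_\ell$). Hence $H_{\theta}(\mathcal I)$ contains, so equals, the maximal ideal of $V^{\ell-4}(A_1)$, giving $H_{\theta}(V_{-2}(D_\ell))=V_{\ell-4}(A_1)$, consistent with $W_{-2}(D_\ell,\theta)=V_{\ell-4}(A_1)$.

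Then I would close the two inclusions. If $L((-2-j)\Lambda_0+j\Lambda_1)$ is a $V_{-2}(D_\ell)$--module, then its image under $H_{\theta}$, which by Proposition~\ref{clas-2-intermediate}(2) is $L_{A_1}((\ell-4-j)\Lambda_0+j\Lambda_1)$, is a module over $H_{\theta}(V_{-2}(D_\ell))=V_{\ell-4}(A_1)$; since $\ell-4\in{\Z}_{\ge 0}$ this vertex algebra is rational, so necessarily $j\le\ell-4$. Conversely, for $0\le j\le\ell-4$ the module $M:=L((-2-j)\Lambda_0+j\Lambda_1)$ is a $\mathcal V_{-2}(D_\ell)$--module (Proposition~\ref{clas-2-intermediate}), and I claim $\mathcal I M=0$. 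If not, then $\mathcal I M=M$ by irreducibility, and applying $H_{\theta}$ together with its compatibility with the action of an ideal on a module we obtain $H_{\theta}(M)=H_{\theta}(\mathcal I M)=H_{\theta}(\mathcal I)\cdot H_{\theta}(M)$, i.e. $L_{A_1}((\ell-4-j)\Lambda_0+j\Lambda_1)$ equals the maximal ideal of $V^{\ell-4}(A_1)$ acting on this integrable module — which is zero since $j\le\ell-4$ — whereas $H_{\theta}(M)\ne 0$ because $((-2-j)\Lambda_0+j\Lambda_1)(\alpha_0^{\vee})=-2-j\notin{\Z}_{\ge 0}$. This contradiction forces $\mathcal I M=0$, so $M$ is a $V_{-2}(D_\ell)$--module, and the classification follows.

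I expect the delicate point to be the compatibility $H_{\theta}(\mathcal I M)=H_{\theta}(\mathcal I)\cdot H_{\theta}(M)$ used in the last step — equivalently, the fact that $H_{\theta}$ carries a $V$--module to a genuine $H_{\theta}(V)$--module and is exact on the ambient short exact sequences; this is a structural property of the BRST reduction functor and must be invoked carefully from \cite{Araduke}. The only other nonformal input is the explicit evaluation $H_{\theta}(w_2)=c\,e_{\epsilon_1-\epsilon_2}(-1)^{\ell-3}{\bf 1}$, a finite calculation of exactly the kind already performed in Lemma~\ref{collapsing-d}; everything else is bookkeeping with Proposition~\ref{clas-2-intermediate} and rationality of $V_{\ell-4}(A_1)$. (An alternative for the second step: since $V_{-2}(D_\ell)$ has finitely many irreducibles in $\mathcal O$ by \cite{AM} while $V^{\ell-4}(A_1)$ has infinitely many, $H_{\theta}(V_{-2}(D_\ell))$ must be a proper quotient of $V^{\ell-4}(A_1)$, hence equal to $V_{\ell-4}(A_1)$; and an alternative for the last step would be to realize the modules $L((-2-j)\Lambda_0+j\Lambda_1)$, $0\le j\le\ell-4$, inside the Weyl vertex algebra $M_{2\ell}$ via \cite{AKMPP-new}.)
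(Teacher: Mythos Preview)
Your argument is correct in its architecture and matches the paper's route for the ``only if'' direction, but you work much harder than necessary at one step and do more than the paper at another.

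\textbf{The identification of $H_{\theta}(V_{-2}(D_\ell))$.} The paper does not compute $H_{\theta}(w_2)$, nor does it pass through the short exact sequence $0\to\mathcal I\to\mathcal V_{-2}(D_\ell)\to V_{-2}(D_\ell)\to 0$. It simply observes that $V_{-2}(D_\ell)=L(-2\Lambda_0)$ is irreducible with $(-2\Lambda_0)(\alpha_0^{\vee})=-2\notin\Z_{\ge 0}$, so Theorem~\ref{A}(2) gives that $H_{\theta}(V_{-2}(D_\ell))$ is an \emph{irreducible} $W^{-2}(D_\ell,\theta)$--module; being a quotient of $W^{-2}(D_\ell,\theta)$ it must be $W_{-2}(D_\ell,\theta)$, which equals $V_{\ell-4}(A_1)$ by the collapsing--level Theorem~\ref{TT}. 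This one line replaces your entire second paragraph; your explicit evaluation of $H_{\theta}(w_2)$ is not needed.

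\textbf{The necessity $j\le\ell-4$.} With the identification above in hand, your argument is identical to the paper's: any irreducible $V_{-2}(D_\ell)$--module in $KL_{-2}$ is an irreducible $\mathcal V_{-2}(D_\ell)$--module, hence $L((-2-j)\Lambda_0+j\Lambda_1)$; its reduction is $L_{A_1}((\ell-4-j)\Lambda_0+j\Lambda_1)$ and must be a $V_{\ell-4}(A_1)$--module, forcing $j\le\ell-4$.

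\textbf{The converse.} The paper's proof stops here and does not argue that each $L((-2-j)\Lambda_0+j\Lambda_1)$ with $0\le j\le\ell-4$ is actually a $V_{-2}(D_\ell)$--module; this is taken as known (cf.\ \cite{AM}). Your attempt to supply it hinges on the identity $H_{\theta}(\mathcal I M)=H_{\theta}(\mathcal I)\cdot H_{\theta}(M)$, which you rightly flag as delicate: functoriality of the BRST reduction gives the inclusion $H_{\theta}(\mathcal I)\cdot H_{\theta}(M)\subset H_{\theta}(\mathcal I M)$, but your argument needs the reverse inclusion, and that is not a formal consequence of exactness. Your parenthetical alternative via $M_{2\ell}$ does not help either, since the realization inside $M_{2\ell}$ only produces $\mathcal V_{-2}(D_\ell)$--modules, not $V_{-2}(D_\ell)$--modules. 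If you want a self-contained converse, a cleaner line is: the maximal ideal of $\mathcal V_{-2}(D_\ell)$ is simple and isomorphic to $L(-2(\ell-2)\Lambda_0+2(\ell-3)\Lambda_1)$ (this is proved in the lemma immediately following the proposition), so its image in the Zhu algebra is generated by a vector of $\g$--weight $2(\ell-3)\omega_1$, which annihilates $V(j\omega_1)$ for $j\le\ell-4$ by weight reasons.
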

 \begin{proof}
 Assume that $N$ is an irreducible $V_{-2}(D_{\ell})$--module from the category $KL_{-2}$. Then $N$ is also irreducible as $\mathcal V_{-2} (D_{\ell})$--module, and therefore $N \cong
 L( (-2-j ) \Lambda_0 + j \Lambda_1) $ for certain $j \in {\Z}_{\ge 0}$. Since $H_{\theta} (N)$ must be an irreducible $H_{\theta} (V_{-2} (D_{\ell}) )= W_{-2} (D_{\ell}, \theta) = V_{\ell -4} (A_1)$--module, we get $ j \le \ell -4$, as desired.
 \end{proof}
 Now we want to describe the maximal ideal in $V^{-2}( D_{\ell})$. The next lemma states  that any non-trivial ideal in $\mathcal V_{-2} (D_{\ell})$ is automatically maximal.
 \begin{lem}Let $ \{ 0 \} \ne I   \subsetneqq  \mathcal V_{-2} (D_{\ell})$ be any non-trivial ideal in $\mathcal V_{-2} (D_{\ell})$. Then  we have
 \item[(1)]  $H_{\theta}  (I)$ is the  maximal ideal in $V^{\ell-4} (A_1)$.
 \item[(2)]   $I $ is a maximal ideal in $\mathcal V_{-2} (D_{\ell})$ and $I  = L ( - 2 (\ell-2) \Lambda_0 + 2 (\ell -3) \Lambda_1) $.
\end{lem}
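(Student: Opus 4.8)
The plan is to use the Hamiltonian reduction functor $H_\theta$ to pull back structural information about ideals of $V^{\ell-4}(A_1)$ — which is completely understood since $\ell-4 \in \Z_{\ge 0}$ — to the vertex algebra $\mathcal V_{-2}(D_\ell)$. First I would observe that, by Proposition \ref{clas-2-intermediate}(1), $H_\theta(\mathcal V_{-2}(D_\ell)) = V^{\ell-4}(A_1)$, and that $H_\theta$ is exact (Theorem \ref{A}(1)), so it carries ideals to ideals; thus $H_\theta(I)$ is an ideal of $V^{\ell-4}(A_1)$. The key point is that $H_\theta(I)\ne\{0\}$: if $H_\theta(I)=\{0\}$, then exactness gives $H_\theta(\mathcal V_{-2}(D_\ell)/I) = H_\theta(\mathcal V_{-2}(D_\ell)) = V^{\ell-4}(A_1)$, but $\mathcal V_{-2}(D_\ell)/I$ is a proper quotient of $\mathcal V_{-2}(D_\ell)$ which is $\Z_{\ge0}$--graded with one-dimensional degree-zero piece, hence a quotient of $V^{-2}(D_\ell)$ strictly smaller than $\mathcal V_{-2}(D_\ell)$; it must still surject onto $V_{-2}(D_\ell)$ and therefore have $H_\theta$ of dimension at most that of $W_{-2}(D_\ell,\theta)=V_{\ell-4}(A_1)$, which is a proper quotient of $V^{\ell-4}(A_1)$ — a contradiction. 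Since $V^{\ell-4}(A_1)$ at non-negative integer level has a unique maximal (in fact unique non-trivial) ideal, and every non-zero proper ideal is contained in it, and since $I\subsetneq\mathcal V_{-2}(D_\ell)$ forces $H_\theta(I)$ proper (again by exactness applied to the quotient), we conclude $H_\theta(I)$ is exactly the maximal ideal of $V^{\ell-4}(A_1)$. This proves (1), and in particular $H_\theta(I)$ is independent of $I$.

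For (2), maximality of $I$ follows from (1): if $I\subsetneq I'\subsetneq\mathcal V_{-2}(D_\ell)$, then by exactness $H_\theta(I)\subseteq H_\theta(I')\subsetneq V^{\ell-4}(A_1)$, and by (1) applied to $I'$ we get $H_\theta(I')$ is also the maximal ideal, so $H_\theta(I'/I)=\{0\}$; then $I'/I$ is a non-zero submodule of $\mathcal V_{-2}(D_\ell)/I$ with vanishing $H_\theta$. Reasoning as in Lemma \ref{kriterij}, a non-zero highest-weight submodule of $I'/I$ would have a highest weight $\lambda$ with $\lambda(\alpha_0^\vee)\in\Z_{\ge0}$ by Theorem \ref{A}(2); but then Remark \ref{rm} shows such a module cannot lie in $KL^{-2}$, whereas $\mathcal V_{-2}(D_\ell)$ and all its subquotients are in $KL^{-2}$ — contradiction (alternatively, invoke Proposition \ref{clas-2-intermediate}(3): the only irreducibles in the relevant category are the $L((-2-t)\Lambda_0+t\Lambda_1)$, all of which have $H_\theta$ non-zero by Proposition \ref{clas-2-intermediate}(2)). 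Hence $I$ is maximal.

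It remains to identify $I$ as a highest-weight module. Since $I$ is a graded ideal with lowest-degree component an irreducible finite-dimensional module for the Zhu algebra $A(\mathcal V_{-2}(D_\ell))$ — equivalently, by Proposition \ref{zhu-int}, corresponding to an irreducible module in $KL^{-2}$ — and since $I$ is maximal with $\mathcal V_{-2}(D_\ell)/I\cong V_{-2}(D_\ell)$, the bottom component of $I$ must be the $\g$--module $V(\mu)$ for the unique $\mu$ such that $L(-2\Lambda_0+\mu)$ is the "next" irreducible beyond the trivial one; tracking through Proposition \ref{clas-2-intermediate}(3) and the correspondence $H_\theta(L((-2-t)\Lambda_0+t\Lambda_1))\cong L_{A_1}((\ell-4-t)\Lambda_0+t\Lambda_1)$, the maximal ideal of $V^{\ell-4}(A_1)$ is generated by the singular vector of weight $(\ell-3)$ times the positive root, i.e.\ corresponds to $t=\ell-3$, so $\mu = 2(\ell-3)\omega_1$ as an $A_1$--weight, giving the $\widehat\g$--highest weight $-2(\ell-2)\Lambda_0 + 2(\ell-3)\Lambda_1$. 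Finally, that $I$ is a highest-weight module (not merely containing one) follows because $H_\theta(I)$ is the irreducible highest-weight $V^{\ell-4}(A_1)$--module $L_{A_1}(-2(\ell-2)\Lambda_0+2(\ell-3)\Lambda_1)$, so $I$ has irreducible image under the exact functor $H_\theta$; any proper submodule of $I$ would have $H_\theta$ either zero or all of $H_\theta(I)$, and the same $KL^{-2}$ argument as above rules out a proper submodule with $H_\theta=\{0\}$, forcing $I$ to be irreducible as a $V^{-2}(D_\ell)$--module, hence $I\cong L(-2(\ell-2)\Lambda_0+2(\ell-3)\Lambda_1)$.

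I expect the main obstacle to be the bookkeeping in the last step: pinning down the precise highest weight $-2(\ell-2)\Lambda_0 + 2(\ell-3)\Lambda_1$ requires carefully matching the $A_1$--level-$(\ell-4)$ maximal-ideal data through the weight-shift formula of \cite[(6.14)]{KW2} and the conformal-weight formula \eqref{lowest-conf-weight-W}, and making sure the $D_{\ell-2}$--part of $\mu^\natural$ genuinely vanishes (which is where Lemma \ref{collapsing-d} and part (3) of Proposition \ref{clas-2-intermediate} are essential).
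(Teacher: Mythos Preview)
Your overall strategy matches the paper's, but the argument you give in part (1) for $H_\theta(I)\ne\{0\}$ does not work. You write that $\mathcal V_{-2}(D_\ell)/I$ ``must still surject onto $V_{-2}(D_\ell)$ and therefore have $H_\theta$ of dimension at most that of $V_{\ell-4}(A_1)$'' --- but a surjection gives a quotient, not an upper bound on size. Exactness only tells you that $H_\theta(\mathcal V_{-2}(D_\ell)/I)$ surjects onto $H_\theta(V_{-2}(D_\ell))=V_{\ell-4}(A_1)$, which is perfectly compatible with $H_\theta(\mathcal V_{-2}(D_\ell)/I)=V^{\ell-4}(A_1)$; no contradiction arises this way. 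The correct argument is precisely the one you label as an ``alternative'' later in part (2), and it is what the paper uses: $I$ lies in $KL^{-2}$, so every irreducible subquotient of $I$ is one of the modules $L((-2-t)\Lambda_0+t\Lambda_1)$ by Proposition \ref{clas-2-intermediate}(3), and each of these has nonzero $H_\theta$ by Proposition \ref{clas-2-intermediate}(2); since $I\ne\{0\}$ and $H_\theta$ is exact, $H_\theta(I)\ne\{0\}$. Move this argument into part (1) and the rest goes through.

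Two smaller points. First, there is a slip in the bookkeeping: the maximal ideal of $V^{\ell-4}(A_1)$ has highest $A_1$--weight $2(\ell-3)\omega_1$, so in your parametrization $t=2(\ell-3)$, not $t=\ell-3$ (you nonetheless land on the correct $\widehat\g$--weight). Second, your irreducibility argument for $I$ --- $H_\theta(I)$ is irreducible, and any nonzero subquotient in $KL^{-2}$ has nonzero $H_\theta$, so $I$ has no proper nonzero submodule --- is correct and is essentially how the paper concludes as well (the paper phrases it via uniqueness of the singular vector, using that the maximal ideal in $V^{\ell-4}(A_1)$ is simple).
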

\begin{proof}
  Assume that $I$ is a non-trivial ideal in   $\mathcal V_{-2} (D_{\ell})$. Then $I$ can be regarded as  a $\mathcal V_{-2} (D_{\ell})$--module in the category $KL^{-2}$ and therefore, by Proposition \ref{clas-2-intermediate}, (3),  it contains a non-trivial subquotient isomorphic to   $L( (-2-j ) \Lambda_0 + j \Lambda_1) $ for some $j \in {\Z}_{\ge 0}$. Since, by part (2) of the aforementioned Proposition,  $H_{\theta} (L( (-2-j ) \Lambda_0 + j \Lambda_1) ) \ne 0$ for every $j \in {\Z}_{\ge 0}$, we conclude that
 $H_{\theta}  (I )$ is a non-trivial ideal in $H_{\theta} (\mathcal V_{-2} (D_{\ell})) = V^{\ell -4} (A_1)$.  But since $ V^{\ell -4} (A_1)$, $\ell \geq 4$, contains a unique non-trivial ideal, which is automatically maximal,  we have that $H_{\theta}  (I )$ is a maximal ideal in $ V^{\ell -4} (A_1)$. So
$$H_{\theta}  (\mathcal V_{-2} (D_{\ell}  ) / I )\cong  V_{\ell -4}  (A_1). $$
Assume now that  $ \mathcal V_{-2} (D_{\ell}  ) / I  $ is not simple. Then it contains a non-trivial singular vector $v ' $ of weight $-(2+ j ) \Lambda_0 + j \Lambda_1$ for $j \in {\Z}_{> 0}$. By \cite{Araduke}, we have that $H_{\theta} (  V^{-2} (D_{\ell})  .v') $  is  a non-trivial ideal in $V_{\ell -4} (A_1)$ generated by  a singular vector of  $A_1$--weight $j \omega_1$. This is a contradiction.  So $I $ is  the maximal ideal.

Since the maximal ideal in $V^{\ell -4} (A_1)$ is generated by a singular vector of $A_1$--weight $ 2 (\ell-3) \omega_1$  and since the maximal ideal is simple,
we conclude that $I  = \mathcal V_{-2}(D_{\ell}). v_{sing}$ for a certain singular vector $v_{sing}$ of weight $\lambda =  - 2 (\ell-2) \Lambda_0 + 2 (\ell -3) \Lambda_1$.  It is also clear that this singular vector is unique, up to scalar factor. Therefore, $I = L ( - 2 (\ell-2)  \Lambda_0 + 2 (\ell -3) \Lambda_1)$.
\end{proof}
 Note that in the previous lemma we proved the existence of a  singular vector which generates the maximal ideal without presenting a formula for such a  singular vector.
 Since the vector in \eqref{w2} has the correct weight, we also have an explicit expression for this singular vector:
\bea
\Big(\sum _{i=2}^{ \ell} e_{\epsilon_1 - \epsilon_i}(-1)
e_{\epsilon_1 + \epsilon_i}(-1)\Big) ^{\ell -3} {\bf 1} \nonumber \eea
\begin{coro} \item[(1)] The maximal ideal in  $V^ {-2}(D_{\ell})$ is generated by the vectors $w_1$ and $w_2$  for $\ell \ge 5$ and by the vectors $w_1$, $w_2$, $w_3$ for $\ell = 4$.
\item[(2)] The homomorphism $\overline \Phi : \mathcal V_{-2} (D_{\ell} ) \rightarrow M_{2 \ell}$ is injective. In particular,  the vertex algebra
$\mathcal V_{-2} (D_{\ell}) \otimes V_{-\ell} (A_1)$ is conformally embedded into $V_{-1/2} (C_{2\ell})$.
\item [(3)] $\mbox{ch}  (\mathcal V_{-2} (D_{\ell} ) ) = \mbox{ch}  (V_{-2} (D_{\ell} ) ) +   \mbox{ch}  L ( - 2 (\ell-2) ) \Lambda_0 + 2 (\ell -3) \Lambda_1)$.
\end{coro}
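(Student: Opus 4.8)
The plan is to read off all three parts from the preceding lemma, which exhibits the maximal ideal $I$ of $\mathcal V_{-2}(D_{\ell})$ as the irreducible module $L(-2(\ell-2)\Lambda_0+2(\ell-3)\Lambda_1)$, shows that it is generated by a singular vector $v_{sing}$ of that weight, unique up to a scalar, and identifies $\mathcal V_{-2}(D_{\ell})/I$ with $V_{-2}(D_{\ell})$.

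For (1) I would argue as follows. Let $\mathfrak m$ be the (unique) maximal ideal of $V^{-2}(D_{\ell})$, i.e. the kernel of the projection $V^{-2}(D_{\ell})\twoheadrightarrow V_{-2}(D_{\ell})$. The singular vectors $w_1$, and $w_3$ when $\ell=4$, lie at positive conformal weight, hence generate proper ideals and lie in $\mathfrak m$; thus $J_{\ell}\subseteq\mathfrak m$, and $\mathfrak m/J_{\ell}$ is a non-zero ideal of $\mathcal V_{-2}(D_{\ell})$, so $\mathfrak m/J_{\ell}=I$ by the lemma. Therefore $\mathfrak m$ is generated, as an ideal of $V^{-2}(D_{\ell})$, by the generators of $J_{\ell}$ together with any lift of $v_{sing}$. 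The point will be that $w_2=v_{\ell-3}$ is such a lift: it is a singular vector of $V^{-2}(D_{\ell})$ whose weight is exactly $-2(\ell-2)\Lambda_0+2(\ell-3)\Lambda_1$, as already noted before the corollary, and a short weight-space argument shows $w_2\notin J_{\ell}$ --- the weight of $w_2$ cannot be obtained from the highest weight of $w_1$, nor from that of $w_3$ when $\ell=4$, by subtracting a sum of positive roots of $\widehat{\g}$, since the coefficient of $\alpha_1$ in such an expansion would come out negative. Hence the image of $w_2$ in $\mathcal V_{-2}(D_{\ell})$ is a non-zero singular vector of the same weight as $v_{sing}$, so it is a non-zero scalar multiple of $v_{sing}$ and generates $I$. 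This yields (1).

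For (2) I would use that $\overline\Phi\colon\mathcal V_{-2}(D_{\ell})\to M_{2\ell}$ is non-zero (from the Proposition constructing it), so $\ker\overline\Phi$ is a proper ideal of $\mathcal V_{-2}(D_{\ell})$ and hence, by the lemma, equals $0$ or $I=\langle v_{sing}\rangle$. By (1), $v_{sing}$ is a non-zero multiple of the image of $w_2$, so injectivity of $\overline\Phi$ is equivalent to $\Phi(w_2)\neq 0$ in $M_{2\ell}$, which I would verify by a direct computation: substituting the explicit formulas for $\Phi(e_{\epsilon_1\pm\epsilon_i})$ from \cite{AKMPP-new} into
$$w_2=\Big(\sum_{i=2}^{\ell}e_{\epsilon_1-\epsilon_i}(-1)e_{\epsilon_1+\epsilon_i}(-1)\Big)^{\ell-3}\vac$$
and isolating a leading monomial in the bosons $a_i^{\pm}$. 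Once $\overline\Phi$ is known to be injective, its image is a copy of $\mathcal V_{-2}(D_{\ell})$ inside $M_{2\ell}$; together with the standard copy of $V_{-\ell}(A_1)$ in $M_{2\ell}$, which commutes with it because $D_{\ell}$ and $A_1$ form a commuting pair in $C_{2\ell}$, it generates a subalgebra of $V_{-1/2}(C_{2\ell})=M_{2\ell}^{\Z_2}$ isomorphic to $\mathcal V_{-2}(D_{\ell})\otimes V_{-\ell}(A_1)$; and since $D_{\ell}\times A_1$ is conformally embedded in $C_{2\ell}$ at level $-\half$ (\cite{AKMPP-JA}), the Sugawara vectors of this subalgebra and of $V_{-1/2}(C_{2\ell})$ coincide, so the embedding is conformal.

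For (3) I would simply use the exact sequence of $\widehat{\g}$--modules
$$0\longrightarrow L(-2(\ell-2)\Lambda_0+2(\ell-3)\Lambda_1)\longrightarrow\mathcal V_{-2}(D_{\ell})\longrightarrow V_{-2}(D_{\ell})\longrightarrow 0$$
provided by the lemma: all three terms lie in category $\mathcal O$ with finite-dimensional $L_{\g}(0)$--eigenspaces, so characters add along the sequence, which is exactly the claimed identity. The only genuinely non-formal step in the whole argument is the non-vanishing $\Phi(w_2)\neq 0$ needed for (2); the weight-space computation in (1) is routine, though it must be carried out separately for $\ell\geq 5$ and $\ell=4$, and once these two points are in hand the corollary follows formally from the preceding lemma.
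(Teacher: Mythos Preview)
Your argument is correct and matches the paper's (implicit) approach: the corollary is stated there without proof, immediately after the lemma and the remark that $w_2$ has the right weight to serve as $v_{sing}$. Your weight-space verification that $w_2\notin J_\ell$ (the $\alpha_1$-coefficient in the expansion of $\mathrm{wt}(w_1)-\mathrm{wt}(w_2)$, and of $\mathrm{wt}(w_3)-\mathrm{wt}(w_2)$ when $\ell=4$, is $-1$) is precisely the routine check the paper leaves to the reader, and (3) is indeed immediate from the short exact sequence.

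For (2) there is a computation-free alternative, arguably closer to the paper's intent, that disposes of the one step you flag as non-formal. The irreducible modules $L((-2-t)\Lambda_0+t\Lambda_1)$ from the earlier Proposition are obtained via \cite[Lemma~7.1]{AKMPP-new} through the homomorphism $\Phi$ into $M_{2\ell}$, hence are modules for the \emph{image} $\mathrm{Im}(\overline\Phi)$. If $\ker\overline\Phi=I$ then $\mathrm{Im}(\overline\Phi)\cong V_{-2}(D_\ell)$, which by Proposition~\ref{class-d} has only finitely many irreducibles in $KL_{-2}$; but the image admits infinitely many, a contradiction. This module-counting argument replaces the leading-monomial computation of $\Phi(w_2)$ and, incidentally, already gives $w_2\notin J_\ell$ (since $J_\ell\subseteq\ker\Phi$), making the weight argument in (1) optional as well.
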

 \begin{rem} D. Gaiotto in \cite{Ga} has started  a study of  the decomposition of $M_{2 \ell}$ as a $V^ { -2 } ( D_{ \ell} )  \otimes V_{-\ell} (A_1)  $--module in the case $\ell = 4$. By combining results from \cite[Section 8]{AKMPP-new} and results from this Section we get that
 $$\mbox{Com} (V_{-\ell} (A_1), M_{2 \ell}) \cong \mathcal V_{-2} (D_{\ell}).$$
 So the vertex algebra responsible for the  decomposition of $M_{2\ell}$ is exactly $\mathcal V_{-2} (D_{\ell})$. Therefore in the decomposition of  $M_{2\ell}$   only modules for
 $\mathcal V_{-2} (D_{\ell})$ can appear.  In our forthcoming papers we plan to apply  the representation theory of $\mathcal V_{-2}(D_{\ell})$  to the problem of finding branching rules.
 \end{rem}
\begin{coro} \label{semi-simple-d}
For $\ell \ge 3$   the category $KL_{-2}$ is semi-simple.
\end{coro}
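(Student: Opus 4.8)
The plan is to derive semisimplicity of $KL_{-2}$ from the general machinery already set up, namely Theorem \ref{general-complete-reducibility} together with the criterion of Lemma \ref{kriterij}, using crucially that $-2$ is a collapsing level for $D_\ell$, so that $W_{-2}(D_\ell,\theta)$ is an affine $A_1$--vertex algebra whose rationality (for $\ell\ge4$) is well understood.

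The first thing I would do is dispose of the degenerate endpoint $\ell=3$. Since $D_3\cong A_3$ as simple Lie algebras, and this isomorphism matches the normalizations $(\theta|\theta)=2$ (both have $h^\vee=4$), the category $KL_{-2}$ for $D_3$ coincides with the category $KL_{-2}$ for $A_3$; but $\g=A_3$ at level $-2$ is exactly case (4) of Theorem \ref{cases-semi-simple} with the parameter there equal to $2$, whose proof is given in that theorem (and which goes through $H_\theta(V_{-2}(A_3))=V_{-1}(A_1)$ and the irreducibility of highest weight $V_{-1}(sl(2))$--modules in $KL_{-1}$). So from here on I assume $\ell\ge4$.

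For $\ell\ge4$ I would first check the hypotheses of Theorem \ref{general-complete-reducibility}: $k=-2\in\Q$ and $-2>-h^\vee=-(2\ell-2)$ since $\ell\ge3$. Hence it suffices to establish \eqref{asum-hwm}, i.e.\ that every highest weight $V_{-2}(D_\ell)$--module in $KL_{-2}$ is irreducible, and by Lemma \ref{kriterij} it is enough to show that $H_\theta(U)$ is a non-zero irreducible $W_{-2}(D_\ell,\theta)$--module for every non-zero highest weight $V_{-2}(D_\ell)$--module $U$ in $KL_{-2}$. This is where the collapsing level is used: as established in this section, $W_{-2}(D_\ell,\theta)\cong V_{\ell-4}(A_1)$ for $\ell\ge4$ (the trivial vertex algebra $\C$ when $\ell=4$), which is a rational vertex operator algebra. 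If $U$ has highest weight $\lambda$, then $\lambda_{|\h}\in P_+$ by Proposition \ref{fd}, so $\lambda(\theta^\vee)\ge0$ and $\lambda(\alpha_0^{\vee})=-2-\lambda(\theta^\vee)<0$; hence $H_\theta(L(\lambda))\ne0$ by Theorem \ref{A}, and since $H_\theta$ is exact and $U$ surjects onto $L(\lambda)$, also $H_\theta(U)\ne0$. As $U$ is a quotient of a Verma module, $H_\theta(U)$ is a (non-zero) highest weight module over $V_{\ell-4}(A_1)$, hence irreducible by rationality, which is precisely the hypothesis of Lemma \ref{kriterij}. Equivalently, having verified that $W_{-2}(D_\ell,\theta)$ is rational, one may simply invoke Theorem \ref{rational} directly.

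I do not expect a serious obstacle here: the essential input, the identification $W_{-2}(D_\ell,\theta)\cong V_{\ell-4}(A_1)$, is already in hand, and the remainder is an application of the functorial package. The only points requiring attention are the non-rational endpoint $\ell=3$, where one must route through case (4) of Theorem \ref{cases-semi-simple}, and the elementary but necessary observation that every module in $KL_{-2}$ has dominant $\g$--top weight, which is exactly what makes $H_\theta$ non-vanishing on highest weight modules in $KL_{-2}$ and thus allows Lemma \ref{kriterij} to apply.
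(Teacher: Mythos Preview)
Your proposal is correct and follows essentially the same approach as the paper: for $\ell\ge4$ you invoke rationality of $W_{-2}(D_\ell,\theta)\cong V_{\ell-4}(A_1)$ and apply Theorem~\ref{rational} (or equivalently Lemma~\ref{kriterij} plus Theorem~\ref{general-complete-reducibility}), exactly as the paper does. For $\ell=3$ you route through the isomorphism $D_3\cong A_3$ and case~(4) of Theorem~\ref{cases-semi-simple}, whereas the paper argues directly with $D_3$ using $H_\theta(V_{-2}(D_3))=V_{-1}(sl(2))$; but these are the same argument in slightly different dress, and indeed the paper itself remarks in the proof of Theorem~\ref{cases-semi-simple} that case~(1) for $\ell=3$ is a special case of case~(4).
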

 \begin{proof}
  The assertion in the case $ \ell \ge 4$ follows from  Theorem  \ref{rational} since then $W_{-2} (D_{\ell}, \theta) = V_{\ell -4} (sl(2))$ is a rational vertex algebra.

   In the case $\ell = 3$, we have that a highest weight $V_{-2}(D_3)$--module $M$  is isomorphic to $ \widetilde   L( (-2-j ) \Lambda_0 + j \Lambda_1) $ where  $j \in {\Z}_{\ge 0}$. The irreducibility of $M$ follows easily from  the fact  that $H_{\theta} (M )$ is isomorphic to an irreducible $V_{-1} (sl(2))$--module    $L _{A_1} (-1-j ) \Lambda_0 + j \Lambda_1) $.  Now claim follows from Theorem  \ref{general-complete-reducibility} and Lemma \ref{kriterij}.
 \end{proof}

\section{ The vertex algebra $V^{-2} (B_{\ell} ) $ and its quotients  }

In this section let $\ell \ge 2$.
Note that $k=-2$ is a collapsing level for $B_{\ell}$ \cite{AKMPP-JA}, and that the simple affine $W$-algebra $W_{-2} (B_{\ell}, \theta)$ is isomorphic to $V_{\ell -\frac{7}{2}} (A_1)$.
This implies that $H_{\theta} (V_{-2} (B_{\ell})) = V_{\ell -\frac{7}{2}} (A_1). $
But as in the case of the affine Lie algebra of type $D$,  we can construct an intermediate vertex algebra $\mathcal V $ so that   $H_{\theta} ( \mathcal V ) = V^{\ell -7/2}(A_1)$.
\begin{rem} \label{remark-singv-notation} The formula for a singular vector of conformal weight two in $V^{-2} (B_{\ell})$ was given in \cite[Theorem 4.2]{AM}
for $\ell \ge 3$, and in \cite[Remark 4.3]{AM} for $\ell =2$. Note that, for $\ell \ge 4$, the vector $\sigma(w_2)$ from \cite{AM} is equal to the vector
$w_1$ from relation (\ref{sing-D4-1}), i.e. it is contained in the subalgebra $V^{-2}(D_4)$. For $\ell =3$, we have
\bea w_1 = (e_{\epsilon_1 + \epsilon_2}(-1) e_{\epsilon_3}(-1)
- e_{\epsilon_1 + \epsilon_3}(-1) e_{\epsilon_2}(-1)
 + e_{\epsilon_1}(-1)
e_{\epsilon_2 + \epsilon_3}(-1) ) {\bf 1}. \nonumber
\eea
For $\ell =2$, the singular vector of conformal weight two in $V^{-2} (B_2)$ is equal to
\bea w_1 = (e_{\epsilon_1 + \epsilon_2}(-1) e_{-\epsilon_2}(-1)
+ \frac{1}{2}h_{\epsilon_2}(-1) e_{\epsilon_1}(-1) - e_{\epsilon_1 - \epsilon_2}(-1)
e_{\epsilon_2} (-1) ) {\bf 1}. \nonumber
\eea
\end{rem}
Consider the singular vector in $V^{-2} (B_{\ell} ) $ denoted by $\sigma(w_2)$  in \cite[Theorem 4.2]{AM} and \cite[Section 7]{AM2}.
Let us denote that singular vector by $w_1$ in this paper (see Remark \ref{remark-singv-notation} for explanation).

Then we have the quotient vertex algebra
\begin{equation}\label{v-2b}\mathcal V_{-2} (B_{\ell}) = V^{-2} (B_{\ell})\big/ \langle  w_1 \rangle. \end{equation}
As in the case of the vertex algebra $\mathcal V_{-2} (D_{\ell})$, we have the non-trivial homomorphism $\mathcal V_{-2} (B_{\ell}) \rightarrow  M_{2\ell +1}$.

The proof of the following result is completely analogous to the proof of Proposition \ref{clas-2-intermediate} and it is therefore omitted.
\begin{prop} \label{clas-2-intermediate-B}
We have
\item[(1)] There is a non-trivial homomorphism $\overline \Phi : \mathcal V_{-2}  (B_{\ell})  \rightarrow M_{2\ell +1}$.
\item[(2)] $H_{\theta} ( \mathcal V_{-2} (B_{\ell}) ) = V^{\ell -7/2}(A_1)$.
\item[(3)] $ H_{\theta}( L( (-2-t) \Lambda_0 + t \Lambda_1) ) \cong L_{A_1}( (\ell-7/2 -t) \Lambda_0 + t \Lambda_1 ), \ t \in {\Z_{\ge 0}}.$
\item[(4)] The set
\bea \label{clas-b} \{ L( (-2-t) \Lambda_0 + t \Lambda_1) \ \vert \ t \in {\Z_{\ge 0}} \}  \eea
provides a complete list of irreducible  $\mathcal V_{-2} (B_{\ell})$--modules from the category $KL^{-2}$.
\end{prop}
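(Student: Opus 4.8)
The plan is to carry the proof of Lemma~\ref{collapsing-d} and Proposition~\ref{clas-2-intermediate} over, verbatim in structure, to $\g=B_\ell$. The relevant minimal-grading data are: $h^\vee=2\ell-1$, so $2(k+h^\vee)=2(2\ell-3)$; $\g^\natural=A_1\oplus B_{\ell-2}$ (with the conventions $B_0=\{0\}$, $B_1=A_1$); $\g_{-1/2}=\C^2\otimes\C^{2\ell-3}$ as an $A_1\times B_{\ell-2}$-module, $\C^{2\ell-3}$ the defining $B_{\ell-2}$-module; and $W^{-2}(B_\ell,\theta)\supseteq V^{\ell-7/2}(A_1)\otimes V^0(B_{\ell-2})$, the two tensor factors being generated by $\{J^{\{a\}}\mid a\in A_1\}$, resp.\ $\{J^{\{a\}}\mid a\in B_{\ell-2}\}$. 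Since $-2$ is a collapsing level for $B_\ell$ (Theorem~\ref{TT}) with $W_{-2}(B_\ell,\theta)=V_{\ell-7/2}(A_1)$, the maximal ideal of $W^{-2}(B_\ell,\theta)$ contains all $J^{\{a\}}$, $a\in B_{\ell-2}$, and all $G^{\{v\}}$, $v\in\g_{-1/2}$.

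The first step is the $B$-analogue of Lemma~\ref{collapsing-d}. Let $w_1\in V^{-2}(B_\ell)$ be the conformal weight $2$ singular vector of Remark~\ref{remark-singv-notation} (from \cite{AM}, \cite{AM2}); its $\widehat\g$-highest weight is $-2\Lambda_0+\nu$ with $\nu=\epsilon_1+\epsilon_2+\epsilon_3+\epsilon_4$ for $\ell\ge4$, $\nu=\epsilon_1+\epsilon_2+\epsilon_3$ for $\ell=3$, $\nu=\epsilon_1$ for $\ell=2$. In each case $(-2\Lambda_0+\nu)(\alpha_0^\vee)=-2-(\nu,\theta)<0$, so by \cite{Araduke} (cf.\ Theorem~\ref{A}), $H_\theta(\langle w_1\rangle)$ is a nonzero highest weight $W^{-2}(B_\ell,\theta)$-module, with $\h^\natural$-highest weight $\nu^\natural$ (the $\h^\natural$-component in \eqref{mumus}) and cyclic vector of conformal weight $1$ for $\ell\ge3$, resp.\ $3/2$ for $\ell=2$, by \eqref{lowest-conf-weight-W}. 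Matching weight and conformal weight inside $V^{\ell-7/2}(A_1)\otimes V^0(B_{\ell-2})$ (resp.\ among the $G^{\{v\}}$ when $\ell=2$) identifies this cyclic vector, up to a scalar, with $J^{\{e_\eta\}}_{(-1)}\vac$ for a root vector $e_\eta$ of $B_{\ell-2}$ when $\ell\ge3$, and with $G^{\{v\}}$ for a suitable $v$ when $\ell=2$. Since $H_\theta(\langle w_1\rangle)$ is an ideal of $W^{-2}(B_\ell,\theta)$, using the ideal axioms together with the $W$-algebra relation $J^{\{a\}}_{(0)}G^{\{v\}}=G^{\{[a,v]\}}$, $a\in\g^\natural$ (see \cite{KW2}, \cite{AKMPP-JA}), one propagates: for $\ell\ge3$, from $J^{\{e_\eta\}}$ one gets all $J^{\{a\}}$, $a\in B_{\ell-2}$, and then all of $\g_{-1/2}=\C^2\otimes\C^{2\ell-3}$ worth of $G$'s (using irreducibility of $\C^{2\ell-3}$ under $B_{\ell-2}$ and of $\C^2$ under $A_1$); for $\ell=2$ one starts from the $G$ and uses the $A_1$-action. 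Hence $H_\theta(\langle w_1\rangle)$ contains all $J^{\{a\}}$, $a\in B_{\ell-2}$, and all $G^{\{v\}}$, so $H_\theta(\mathcal V_{-2}(B_\ell))$ is generated by $\{x_{(-1)}\vac\mid x\in A_1\subset\g^\natural\}$ alone and therefore equals $V^{\ell-7/2}(A_1)$ or $V_{\ell-7/2}(A_1)$.

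To decide between these, one uses the non-trivial homomorphism $\overline\Phi\colon\mathcal V_{-2}(B_\ell)\to M_{2\ell+1}$ constructed above (giving part (1)) and the $\widehat\g$-module structure of $M_{2\ell+1}$ to show, just as for $D_\ell$, that every $L((-2-t)\Lambda_0+t\Lambda_1)$, $t\in\Z_{\ge0}$, is a $\mathcal V_{-2}(B_\ell)$-module; applying $H_\theta$ and Theorem~\ref{A}, these give infinitely many pairwise non-isomorphic irreducible $H_\theta(\mathcal V_{-2}(B_\ell))$-modules (of $A_1$-highest weight $t\omega_1$), forcing $H_\theta(\mathcal V_{-2}(B_\ell))=V^{\ell-7/2}(A_1)$; this proves (2) and (3). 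Finally, for (4), let $L(-2\Lambda_0+\mu)$ be an irreducible $\mathcal V_{-2}(B_\ell)$-module in $KL^{-2}$; then $\mu\in P_+$, $\mu(\alpha_0^\vee)\notin\Z_{\ge0}$ (Remark~\ref{rm}), and by Theorem~\ref{A} $H_\theta(L(-2\Lambda_0+\mu))\cong L_{A_1}((\ell-7/2-j)\Lambda_0+j\Lambda_1)$ for some $j\in\Z_{\ge0}$. Because the $B_{\ell-2}$-currents act trivially on $H_\theta(L(-2\Lambda_0+\mu))$, by \cite[(6.14)]{KW2} the $B_{\ell-2}$-part of $\mu$ vanishes, so $\mu=j\omega_1+s\theta$ with $s\in\half\Z_{\ge0}$ and $\omega_1$ the first fundamental weight of $B_\ell$. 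Equating the $\omega_0$-eigenvalue on the lowest graded piece of $H_\theta(L(-2\Lambda_0+\mu))$ computed from \eqref{lowest-conf-weight-W} with $\frac{j(j+2)}{2(2\ell-3)}$, and simplifying via \eqref{rhotheta} exactly as in Proposition~\ref{clas-2-intermediate}, one gets $2s(s+j+1)=0$, so $s=0$; thus $\mu=j\omega_1$ and \eqref{clas-b} is the complete list.

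I expect the main obstacle to be the first step, namely identifying the cyclic vector of $H_\theta(\langle w_1\rangle)$ inside $W^{-2}(B_\ell,\theta)$ and then propagating the ideal. The singular vector $w_1$ has three genuinely different shapes for $\ell=2$, $\ell=3$ and $\ell\ge4$, and for $\ell=2$---where $\g^\natural$ has no $B_{\ell-2}$-summand---its image is a weight $3/2$ generator $G^{\{v\}}$ rather than a current $J^{\{e_\eta\}}$, so the weight and conformal-weight bookkeeping and the ensuing propagation must be carried out case by case for the small ranks; the remaining steps are verbatim as in the $D_\ell$ case.
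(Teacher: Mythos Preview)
Your proposal is correct and follows exactly the approach the paper intends: the paper omits the proof entirely, stating it is ``completely analogous to the proof of Proposition~\ref{clas-2-intermediate},'' and you have carried out that analogy faithfully, including the correct case-by-case treatment of the singular vector $w_1$ for $\ell=2,3,\ell\ge4$ and the conformal-weight computation yielding $2s(s+j+1)=0$. One minor slip: since $\mu\in P_+$ for $B_\ell$, the coefficient $s$ of $\omega_2$ (equivalently of $\theta$, for $\ell\ge3$) lies in $\Z_{\ge0}$, not merely $\tfrac12\Z_{\ge0}$; this does not affect the argument.
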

We have the following result on classification of irreducible modules.
 \begin{prop} \label{class-b}
 Assume that $\ell \ge 3$. Then the  set
$\{ L( (-2-j ) \Lambda_0 + j \Lambda_1) \ \vert \ j \in {\Z_{\ge 0}}, j \le 2 ( {\ell}-3 ) + 1 \}$
provides a complete list of irreducible  $ V_{-2} (B_{\ell})$--modules from the category $KL_{-2}$.
 \end{prop}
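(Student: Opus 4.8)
The plan is to mimic the argument that proved Proposition \ref{class-d} for type $D$, using the Hamiltonian reduction functor $H_\theta$ and the known rationality of the simple $\mathcal{W}$-algebra $W_{-2}(B_\ell,\theta)\cong V_{\ell-7/2}(A_1)$. First I would take an irreducible $V_{-2}(B_\ell)$-module $N$ in the category $KL_{-2}$. Since $V_{-2}(B_\ell)$ is a quotient of $\mathcal V_{-2}(B_\ell)$, the module $N$ is also an irreducible $\mathcal V_{-2}(B_\ell)$-module in $KL^{-2}$, so by Proposition \ref{clas-2-intermediate-B}(4) we have $N\cong L((-2-j)\Lambda_0+j\Lambda_1)$ for some $j\in\Z_{\ge0}$. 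It remains to determine for which $j$ this module descends to the simple quotient $V_{-2}(B_\ell)$.

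The key step is the constraint coming from $H_\theta$: since $-2\notin\Z_{\ge0}$, Remark \ref{rm} gives $\lambda(\alpha_0^\vee)\notin\Z_{\ge0}$ for $\lambda=(-2-j)\Lambda_0+j\Lambda_1$, hence by Theorem \ref{A} the module $H_\theta(N)$ is a nonzero irreducible highest weight module. By Proposition \ref{clas-2-intermediate-B}(3), $H_\theta(N)\cong L_{A_1}((\ell-7/2-j)\Lambda_0+j\Lambda_1)$, and this must be an irreducible module for the simple rational vertex algebra $H_\theta(V_{-2}(B_\ell))=W_{-2}(B_\ell,\theta)\cong V_{\ell-7/2}(A_1)$. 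The admissible level $k'=\ell-7/2$ for $A_1$ is of the form $p/q-2$ with $p=2\ell-3$, $q=2$; the irreducible $V_{\ell-7/2}(A_1)$-modules in $KL_{\ell-7/2}$ are exactly $L_{A_1}((\ell-7/2-j)\Lambda_0+j\Lambda_1)$ with $0\le j\le 2(\ell-3)+1$. Conversely, for each such $j$ the module $L((-2-j)\Lambda_0+j\Lambda_1)$ is a $V_{-2}(B_\ell)$-module: since $H_\theta$ maps the maximal submodule of $V^{-2}(B_\ell)$ onto the maximal submodule of $W^{-2}(B_\ell,\theta)$ (cf. \cite{Araduke}), any weight that survives in $V_{\ell-7/2}(A_1)$ corresponds to a genuine $V_{-2}(B_\ell)$-module, just as in the proof of Proposition \ref{class-d}.

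The main obstacle I anticipate is the \emph{converse} direction, i.e. showing that every $L((-2-j)\Lambda_0+j\Lambda_1)$ with $j\le 2(\ell-3)+1$ is actually a $V_{-2}(B_\ell)$-module and not merely a $\mathcal V_{-2}(B_\ell)$-module. For type $D$ this followed because all the relevant modules were already known to be $V_{-2}(D_\ell)$-modules from \cite{AM}; the analogous input for type $B$ is the classification in \cite{AM}, \cite{AM2}, so I would invoke those results, together with the structure of the maximal ideal of $\mathcal V_{-2}(B_\ell)$ (which, as in the type $D$ case, is the unique nontrivial ideal, forced by uniqueness of the nontrivial ideal in $V^{\ell-7/2}(A_1)$ under $H_\theta$). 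The remaining point is a short numerical check, exactly parallel to the computation \eqref{actionw} carried out in the proof of Proposition \ref{clas-2-intermediate}, that the conformal weight of the highest weight vector of $H_\theta(L((-2-j)\Lambda_0+j\Lambda_1))$ is $\frac{j(j+2)}{4(\ell-2)}$ (using $h^\vee=2\ell-1$, $2(k+h^\vee)=2(2\ell-3)$), confirming that no extraneous weights $\mu^\natural\ne j\omega_1$ can occur; this is routine and I would not spell it out in full.
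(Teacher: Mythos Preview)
Your approach is essentially the same as the paper's: reduce to $\mathcal V_{-2}(B_\ell)$--modules via Proposition~\ref{clas-2-intermediate-B}, apply $H_\theta$, and use the classification of irreducible $V_{\ell-7/2}(A_1)$--modules in $KL_{\ell-7/2}$ (the paper cites \cite{AdM} for exactly this). One small slip: the conformal--weight check in your last paragraph is not needed for this proposition (it already went into the proof of Proposition~\ref{clas-2-intermediate-B}), and in any case $2(k+h^\vee)=2(2\ell-3)$ for $B_\ell$ at $k=-2$, not $4(\ell-2)$.
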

 \begin{proof}
The proof is analogous  to the proof of Proposition \ref{class-d}: it uses  the  exactness of the  functor $H_{\theta}$ and the representation theory of affine vertex algebras. In particular, we use the result from \cite{AdM} which gives that  the set
$$\{ L( -(\ell - 7/2) -j ) \Lambda_0 + j \Lambda_1) \ \vert \ j \in {\Z_{\ge 0}}, j \le 2 ( {\ell}-3 ) + 1 \}$$ provides a complete list of irreducible  $ V_{\ell - 7/2 } (A_1)$--modules from the category $KL_{\ell -7/2}$.
 \end{proof}
An  important consequence is the simplicity of the vertex algebra  $ \mathcal V_{-2} (B_{2 })$.
\begin{coro}\label{coro-simpl} The vertex algebra $ \mathcal V_{-2}  (B_{\ell})  $ is simple if and only if $\ell =2$.  In particular, the set (\ref{clas-b}) provides a complete list of irreducible modules for $V_{-2} (B_2)$ in $KL_{-2}$.
\end{coro}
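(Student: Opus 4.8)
The plan is to compare $\mathcal V_{-2}(B_\ell)$ with the simple quotient $V_{-2}(B_\ell)$ through the minimal Hamiltonian reduction functor $H_\theta$, noting that the whole question comes down to whether the universal affine vertex algebra $V^{\ell-7/2}(A_1)$ is simple. Recall from the beginning of this section that $W_{-2}(B_\ell,\theta)\cong V_{\ell-7/2}(A_1)$, the \emph{simple} affine vertex algebra, whereas by Proposition~\ref{clas-2-intermediate-B}(2) one has $H_\theta(\mathcal V_{-2}(B_\ell))=V^{\ell-7/2}(A_1)$, the \emph{universal} one. Since $V_{-2}(B_\ell)$ is the unique simple quotient of $V^{-2}(B_\ell)$, the algebra $\mathcal V_{-2}(B_\ell)$ is simple if and only if $\mathcal V_{-2}(B_\ell)=V_{-2}(B_\ell)$; applying the exact functor $H_\theta$ (Theorem~\ref{A}) this forces $V^{\ell-7/2}(A_1)=V_{\ell-7/2}(A_1)$. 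Now $\ell-7/2+h^\vee=\ell-3/2=(2\ell-3)/2$, so for $\ell\ge 3$ the level $\ell-7/2$ is admissible for $A_1$ (numerator $2\ell-3\ge 3$) and $V^{\ell-7/2}(A_1)$ is not simple, while for $\ell=2$ the level equals $-3/2$, which is neither admissible nor critical, so $V^{-3/2}(A_1)$ is simple. This already shows that $\mathcal V_{-2}(B_\ell)$ is not simple for $\ell\ge 3$. (Alternatively, for $\ell\ge 3$ one can observe that $\mathcal V_{-2}(B_\ell)$ has infinitely many irreducible modules in $KL^{-2}$ by Proposition~\ref{clas-2-intermediate-B}(4), while $V_{-2}(B_\ell)$ has only finitely many in $KL_{-2}$ by Proposition~\ref{class-b}, so the two cannot be isomorphic.)

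For $\ell=2$ I would prove that the maximal ideal $I$ of $\mathcal V_{-2}(B_2)$ vanishes. Since $V^{-3/2}(A_1)=V_{-3/2}(A_1)=W_{-2}(B_2,\theta)$, Proposition~\ref{clas-2-intermediate-B}(2) gives $H_\theta(\mathcal V_{-2}(B_2))=H_\theta(V_{-2}(B_2))$; applying the exact functor $H_\theta$ to $0\to I\to\mathcal V_{-2}(B_2)\to V_{-2}(B_2)\to 0$ therefore yields a surjection $V^{-3/2}(A_1)\twoheadrightarrow V^{-3/2}(A_1)$ with kernel $H_\theta(I)$, which, by comparing graded dimensions, forces $H_\theta(I)=0$. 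On the other hand, if $I\neq 0$ then, regarded as a $\mathcal V_{-2}(B_2)$-module in $KL^{-2}$, it has an irreducible subquotient isomorphic to $L((-2-t)\Lambda_0+t\Lambda_1)$ for some $t\in\Z_{\ge 0}$ by Proposition~\ref{clas-2-intermediate-B}(4); by Proposition~\ref{clas-2-intermediate-B}(3) its Hamiltonian reduction is $L_{A_1}((-3/2-t)\Lambda_0+t\Lambda_1)\neq 0$, and by exactness of $H_\theta$ this non-zero module is a subquotient of $H_\theta(I)$, contradicting $H_\theta(I)=0$. Hence $I=0$ and $\mathcal V_{-2}(B_2)=V_{-2}(B_2)$ is simple. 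The ``in particular'' statement is then immediate: the irreducible $V_{-2}(B_2)$-modules in $KL_{-2}$ coincide with the irreducible $\mathcal V_{-2}(B_2)$-modules in $KL^{-2}$, which are exactly the modules listed in \eqref{clas-b} by Proposition~\ref{clas-2-intermediate-B}(4).

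The main obstacle is the simplicity of $\mathcal V_{-2}(B_2)$. The only genuine input there is the simplicity of $V^{-3/2}(A_1)$; granted this, the argument above is essentially forced, but it hinges on the fact --- furnished by Proposition~\ref{clas-2-intermediate-B}(3) together with the exactness in Theorem~\ref{A} --- that \emph{every} irreducible module of $\mathcal V_{-2}(B_2)$ in $KL^{-2}$, hence every irreducible subquotient of an arbitrary ideal of $\mathcal V_{-2}(B_2)$, has non-zero image under $H_\theta$. Without this one could not exclude a non-zero ideal $I$ with $H_\theta(I)=0$.
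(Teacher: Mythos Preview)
Your proof is correct and follows essentially the same strategy as the paper: both arguments hinge on the exactness of $H_\theta$, the identification $H_\theta(\mathcal V_{-2}(B_\ell))=V^{\ell-7/2}(A_1)$ from Proposition~\ref{clas-2-intermediate-B}, and the simplicity of $V^{-3/2}(A_1)$. For $\ell\ge 3$ your primary argument (comparing $H_\theta$ of $\mathcal V_{-2}(B_\ell)$ and $V_{-2}(B_\ell)$ directly and invoking admissibility of $\ell-7/2$) is a slight shortcut over the paper's module-counting via Propositions~\ref{clas-2-intermediate-B}(4) and~\ref{class-b}, which you also record as an alternative; for $\ell=2$ the paper phrases the contradiction via a singular vector generating a nontrivial ideal, whereas you take the maximal ideal and pass to an irreducible subquotient, but the substantive inputs (classification of irreducibles in $KL^{-2}$, non-vanishing of their $H_\theta$, exactness) are identical.
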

\begin{proof}
Since  by Proposition \ref{clas-2-intermediate-B}, $\mathcal V_{-2} (B_{\ell})$ has infinitely many irreducible modules in the category $KL^{-2}$, and, by Proposition \ref{class-b},      $ V^{-2} (B_{\ell})$  has finitely many irreducible modules in the category $KL^{-2}$ (if $\ell \ge 3$), we conclude that  $ \mathcal V_{-2}  (B_{\ell})  $ cannot be simple for $\ell \ge 3$.

Let us consider the case $\ell = 2$.  Assume that $\mathcal V_{-2} (B_2)$ is not simple. Then it must contain an ideal  $I$ generated by a singular vector of weight $\lambda= (-2-j ) \Lambda_0 + j \Lambda_1$ for certain $j >0$. By applying the functor $H_{\theta}$, we get a non-trivial ideal in $V^{-3/2} (A_1)$, against the  simplicity of $V^{-3/2} (A_1)$.\end{proof}
Next we notice that $ V^{\ell -7/2}(A_1)$ has a unique non-trivial ideal $J$ which is generated by a singular vector of $A_1$--weight  $4(\ell-2) \omega_1$. The ideal $J$ is maximal and simple (cf. \cite{AKMPP-JJM}).  By combining this with  properties of the functor $H_{\theta}$ from \cite{Araduke},  one proves the existence of a unique maximal ideal $I$ (which is also simple) in $\mathcal V_{-2} (B_{\ell})$   such that $ I  \cong  L( -2 (2\ell -3) \Lambda_0 + 4(\ell -2)  \Lambda_1) )$.
\begin{rem} The explicit expression for a singular vector which generates $I$ is more complicated that in the case $D$, and it won't be presented here. \end{rem}
In \cite{AKMPP-new} we constructed a homomorphism   $\mathcal V_{-2} (B_{\ell}) \otimes V_{-\ell-1/2} (A_1) \rightarrow M_{2 \ell +1}$. The results of this section enable us to find the image of this homomorphism.
\begin{coro} We have:
\item[(1)] The vertex algebra
$\mathcal V_{-2} (B_{\ell}) \otimes V_{-\ell- 1/2} (A_1)$ is conformally embedded into $V_{-1/2} (C_{2\ell +1})$.
\item[(2)]  The vertex algebra $\mathcal V_{-2} (B_{\ell})$ for $\ell \ge 3$ contains a unique ideal $ I \cong L( -2 (2\ell -3) \Lambda_0 + 4 (\ell -2)  \Lambda_1) $ and
 $$\mbox{ch}  (\mathcal V_{-2} (B_{\ell} ) ) = \mbox{ch}  (V_{-2} (B_{\ell} ) ) +   \mbox{ch}  (L (-2 (2\ell -3) \Lambda_0 + 4(\ell -2)  \Lambda_1)).$$
\end{coro}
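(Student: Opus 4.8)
The plan is to derive both statements from the homomorphism $\Phi\colon \mathcal V_{-2}(B_\ell)\otimes V_{-\ell-1/2}(A_1)\to M_{2\ell+1}$ of \cite{AKMPP-new}, together with Propositions \ref{clas-2-intermediate-B} and \ref{class-b} and Corollary \ref{coro-simpl}. By construction $\Phi$ sends the generators $x(-1)\vac$, $x\in B_\ell\oplus A_1$, to vectors quadratic in the symplectic bosons $a^\pm_i$, hence invariant under $a^\pm_i\mapsto -a^\pm_i$; so the image of $\Phi$, which by definition is $\widetilde V(-\tfrac12,B_\ell\times A_1)$, lies in $V_{-1/2}(C_{2\ell+1})=M_{2\ell+1}^{\Z_2}$. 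The central charges satisfy $c(B_\ell,-2)+c(A_1,-\ell-1/2)=-(2\ell+1)=c(C_{2\ell+1},-1/2)$, so by the conformal embedding criterion (cf. \cite{AKMPP-JA}) $\Phi$ maps $\omega_{B_\ell}+\omega_{A_1}$ to the Segal--Sugawara vector of $V_{-1/2}(C_{2\ell+1})$; thus part (1) reduces to showing that $\Phi$ is injective, which then identifies $\widetilde V(-\tfrac12,B_\ell\times A_1)$ with $\mathcal V_{-2}(B_\ell)\otimes V_{-\ell-1/2}(A_1)$.

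To prove injectivity, for $\ell=2$ it is immediate: both $\mathcal V_{-2}(B_2)=V_{-2}(B_2)$ (Corollary \ref{coro-simpl}) and $V_{-5/2}(A_1)$ are simple, so a non-zero homomorphism out of their tensor product is injective. For $\ell\ge3$, I would argue as follows. The only ideals of $\mathcal V_{-2}(B_\ell)$ are $\{0\}$, the unique non-trivial ideal $I$, and the whole algebra, so, $V_{-\ell-1/2}(A_1)$ being simple, every proper ideal of the tensor product has the form $J\otimes V_{-\ell-1/2}(A_1)$ with $J\in\{\{0\},I\}$; since $\Phi\neq0$, it suffices to exclude $\ker\Phi=I\otimes V_{-\ell-1/2}(A_1)$. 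If this held, $\Phi$ would factor through $\mathcal V_{-2}(B_\ell)/I=V_{-2}(B_\ell)$, so $M_{2\ell+1}$, equipped with the level $-2$ action of $\widehat{B_\ell}$ it carries via $\Phi$, would be a $V_{-2}(B_\ell)$--module, and hence so would every $\widehat{B_\ell}$--subquotient of it. But the modules $L((-2-t)\Lambda_0+t\Lambda_1)$, $t\in\Z_{\ge0}$, occur inside $M_{2\ell+1}$ (just as in the $D_\ell$ case, cf. the analogue of \cite[Lemma 7.1]{AKMPP-new} underlying Proposition \ref{clas-2-intermediate-B}), whereas by Proposition \ref{class-b} only those with $t\le 2(\ell-3)+1$ are $V_{-2}(B_\ell)$--modules, a contradiction. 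Hence $\ker\Phi=\{0\}$ and (1) follows.

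For part (2), with $\ell\ge3$, the structural input is already available from the discussion preceding the Corollary: transporting through the exact functor $H_\theta$ the unique non-trivial ideal $J$ of $V^{\ell-7/2}(A_1)$ and identifying highest weights via \cite[(6.14)]{KW2} and \eqref{lowest-conf-weight-W} yields a maximal, simple ideal $I\cong L(-2(\ell-1)\Lambda_0+2(\ell-2)\Lambda_1)$ of $\mathcal V_{-2}(B_\ell)$; and $I$ is in fact the unique non-trivial ideal, because any non-trivial ideal of $\mathcal V_{-2}(B_\ell)$ has non-zero image under $H_\theta$ (its composition factors all do, by Proposition \ref{clas-2-intermediate-B}) and therefore maps onto $J$, which forces it to coincide with $I$. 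Finally, $\mathcal V_{-2}(B_\ell)/I$ is a simple quotient of $V^{-2}(B_\ell)$, hence equals $V_{-2}(B_\ell)$, and the exact sequence $0\to I\to\mathcal V_{-2}(B_\ell)\to V_{-2}(B_\ell)\to0$ of $\Z_{\ge0}$--graded spaces with finite--dimensional graded pieces gives $\mbox{ch}(\mathcal V_{-2}(B_\ell))=\mbox{ch}(V_{-2}(B_\ell))+\mbox{ch}(L(-2(\ell-1)\Lambda_0+2(\ell-2)\Lambda_1))$.

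The step I expect to be the main obstacle is the injectivity of $\Phi$ for $\ell\ge3$ in part (1): one must rule out that the image collapses to the proper quotient $V_{-2}(B_\ell)\otimes V_{-\ell-1/2}(A_1)$, and the decisive ingredient is that $M_{2\ell+1}$ carries infinitely many pairwise non-isomorphic $\widehat{B_\ell}$--composition factors, which is incompatible with the finiteness of the list of irreducible $V_{-2}(B_\ell)$--modules in $KL_{-2}$ established in Proposition \ref{class-b}. Everything else is either inherited from \cite{AKMPP-new} (the construction of $\Phi$ and the conformality) or a straightforward consequence of the structure theory already developed in this section.
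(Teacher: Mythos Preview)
Your proposal is correct and follows the same line the paper implicitly relies on: in the paper the Corollary is stated without proof, immediately after the text establishing (i) the homomorphism $\mathcal V_{-2}(B_\ell)\otimes V_{-\ell-1/2}(A_1)\to M_{2\ell+1}$ from \cite{AKMPP-new} and (ii) the existence and uniqueness of the simple maximal ideal $I\cong L(-2(\ell-1)\Lambda_0+2(\ell-2)\Lambda_1)$ via $H_\theta$ and the ideal structure of $V^{\ell-7/2}(A_1)$; your argument simply spells out the injectivity step in parallel with the $D_\ell$ case (the unnumbered Corollary after Proposition~\ref{class-d}). One small imprecision: equality of central charges is necessary but not by itself the ``conformal embedding criterion''---the conformality of $\widetilde V(-\tfrac12,B_\ell\times A_1)\subset V_{-1/2}(C_{2\ell+1})$ is taken from \cite{AKMPP-new} (and \cite{AKMPP}), not deduced from the central charge computation alone; since you already invoke \cite{AKMPP-new} for the map, just cite it for conformality as well.
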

Finally, we apply Theorem
 \ref{general-complete-reducibility} and prove that $KL_{-2}$ is a semi-simple category.
 \begin{coro} \label{semi-simple-b}
If  $\ell \ge 2$, then every $V_{-2}(B_{\ell})$--module in $KL_{-2}$ is  completely  reducible.
 \end{coro}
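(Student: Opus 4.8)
The plan is to deduce the statement from Theorem \ref{general-complete-reducibility}. For $\g=B_\ell$ and $k=-2$ the numerical hypotheses of that theorem hold: $k\in\Q$, and $k=-2>-(2\ell-1)=-h^\vee$ since $\ell\ge 2$. Thus everything reduces to verifying property \eqref{asum-hwm}, i.e. that every highest weight $V_{-2}(B_\ell)$--module in $KL_{-2}$ is irreducible, and for this I would invoke the criterion of Lemma \ref{kriterij}: it suffices to show that $H_\theta(U)$ is a non-zero \emph{irreducible} $W_{-2}(B_\ell,\theta)$--module for every non-zero highest weight $V_{-2}(B_\ell)$--module $U$ in $KL_{-2}$.

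First I would dispose of the easy points. Non-vanishing: by Remark \ref{rm} the highest weight $\widehat\Lambda$ of $U$ satisfies $\widehat\Lambda(\alpha_0^\vee)\notin\Z_{\ge 0}$, hence $H_\theta(U)\neq\{0\}$ by Theorem \ref{A}(2). Since $H_\theta$ is exact and carries highest weight modules to highest weight modules, $H_\theta(U)$ is a highest weight module over $W_{-2}(B_\ell,\theta)$. By the opening of Section 6, $-2$ is a collapsing level for $B_\ell$ and $W_{-2}(B_\ell,\theta)\cong V_{\ell-7/2}(A_1)$; moreover, exactly as in the analysis preceding Lemma \ref{lemma-pomoc-1}, $H_\theta$ takes $\g$--locally finite modules in category $\mathcal O$ to $\g^\natural$--locally finite ones (with finite-dimensional $L(0)$--eigenspaces), so by Proposition \ref{fd} applied to $A_1$ the module $H_\theta(U)$ lies in $KL_{\ell-7/2}$.

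The crux, and the main obstacle, is therefore to show that every highest weight $V_{\ell-7/2}(A_1)$--module in $KL_{\ell-7/2}$ is irreducible; I expect the argument to split on $\ell$. For $\ell\ge 3$ I would observe that $\ell-\tfrac72=-2+\tfrac{2\ell-3}{2}$ with $2\ell-3$ an odd integer $\ge 3$, hence coprime to $2$, so $\ell-\tfrac72$ is an admissible level for $A_1$; by \cite{Araduke} the category $KL_{\ell-7/2}$ is then semisimple, and in particular all of its highest weight modules are irreducible. For $\ell=2$ the level $-3/2$ of $A_1$ is not admissible, so \cite{Araduke} no longer applies; here I would instead use $B_2\cong C_2$ and treat the assertion as the base case of the $C$--series reduction in Theorem \ref{cases-semi-simple}(6), where $W_{-2}(B_2,\theta)\cong V^{-3/2}(A_1)$ is simple (cf. the proof of Corollary \ref{coro-simpl}) and one checks directly that its highest weight modules in $KL_{-3/2}$ with finite-dimensional top are irreducible. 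This $\ell=2$ step is the genuinely delicate point, since it requires controlling generalized Verma modules of $V^{-3/2}(A_1)$ by hand rather than by an admissibility black box.

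Granting this, $H_\theta(U)$ is irreducible for every non-zero highest weight $U\in KL_{-2}$; Lemma \ref{kriterij} then gives \eqref{asum-hwm}, and Theorem \ref{general-complete-reducibility} yields the semisimplicity of $KL_{-2}$. I would also remark that an alternative route, closer to the proof of Theorem \ref{cases-semi-simple}, replaces the admissibility argument by the explicit classifications of Propositions \ref{clas-2-intermediate-B} and \ref{class-b} (and the module list of \cite{AdM} for $V_{\ell-7/2}(A_1)$): one identifies the top of $H_\theta(U)$ and concludes irreducibility weight by weight. For $\ell\ge 3$ the admissibility argument above is the shorter of the two.
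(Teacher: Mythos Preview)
Your proposal is correct and follows essentially the same route as the paper: reduce to Theorem~\ref{general-complete-reducibility} via Lemma~\ref{kriterij}, then use that highest weight $V_{\ell-7/2}(A_1)$--modules in $KL_{\ell-7/2}$ are irreducible, splitting on $\ell\ge 3$ (admissible level; the paper cites \cite{AdM} rather than \cite{Araduke}, but the content is the same) versus $\ell=2$ (direct check at $k=-3/2$). The only cosmetic difference is that the paper first invokes Proposition~\ref{class-b} to pin down the highest weight $\lambda=-(2+j)\Lambda_0+j\Lambda_1$ before applying the $A_1$ result, whereas you appeal to semisimplicity of $KL_{\ell-7/2}$ directly; your ``alternative route'' paragraph is exactly what the paper does.
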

\begin{proof}
 It suffices to prove that  every highest weight  $V_{-2}(B_{\ell})$--module  in $KL_{-2}$ is irreducible.   Assume that $\ell \ge 3$.  If $M \cong  \widetilde L(\lambda)$  is a highest weight module in $KL_{-2}$ then the highest weight is $\lambda = -(2+ j) \Lambda _0  + j \Lambda_1$ where $0 \le j \le 2 (\ell -3) + 1$.
 Since $H_{\theta} (L(\lambda) ) $ is a non-zero  highest weight $V_{-\ell + 7/2} (sl(2))$--module, then the complete reducibility result from \cite{AdM} implies that $H_{\theta} (L(\lambda) ) $ is irreducible. The assertion now follows from  Lemma  \ref{kriterij}.
 The proof in the case $\ell =2$ is similar, and it uses the classification of irreducible $V_{-2} (B_2)$--modules from Corollary \ref{coro-simpl} and the fact that every  highest weight $V_{-3/2} (sl(2)) = H_{\theta} (V_{-2} (B_2))$--module in $KL_{-3/2}$ is irreducible.
\end{proof}

\section{On the representation theory of $V_{2-\ell} (D_{\ell} )$ } \label{rep-th-D}

\subsection{ The vertex algebra $\overline V_{2-\ell} (D_{\ell} )$} \label{subsect-V-bar}  Let ${\mathfrak g}$ be a simple Lie algebra of type $D_{ \ell}$.
Recall that $2-\ell = -h^{\vee} / 2 + 1$ is a collapsing level \cite{AKMPP-JA}. We have the singular vector
\bea v_n = \Big(\sum _{i=2}^{ \ell} e_{\epsilon_1 - \epsilon_i}(-1)
e_{\epsilon_1 + \epsilon_i}(-1)\Big) ^n {\bf 1} \label{singvect-D-old} \eea
in $V^ { n - \ell +1 } ( D_{ \ell} )  $, for any $n \in \Z _{>0}$. As in \cite{P}, we consider the vertex algebra
 \begin{equation}\label{vbar} \overline  V_{2-\ell} (D_{\ell} ) =  V^{2-\ell} (D_{\ell} )\big/\langle v_1 \rangle ,\end{equation}
 where  ${\langle v_1 \rangle}$ denotes the ideal in  $V^{2-\ell} (D_{\ell} )$ generated by the singular vector $v_1$. We recall the following result on the classification of irreducible  $  \overline  V_{2-\ell} (D_{\ell} )$--modules in the category $KL^{2-\ell}$.
 \begin{prop}\cite{P} \label{prop-rep-th-D}
 \item[(1)] The set $$\{ V(t \omega_{\ell}), V(t \omega_{\ell-1}) \ \vert \ t \in {\Z_{\ge 0}} \}$$ provides a complete list of irreducible finite-dimensional modules for the  Zhu algebra
 $A(\overline  V_{2-\ell} (D_{\ell} ) ). $
 \item[(2)]
 The set $$\{ L( (2-t-\ell) \Lambda_0 + t \Lambda_{\ell}), L ( (2-t-\ell) \Lambda_0+ t \Lambda_{\ell-1}) \ \vert \ t \in {\Z_{\ge 0}} \}$$ provides a complete list of irreducible  $\overline V_{2-\ell} (D_{\ell})$--modules from the category $KL^{2-\ell}$.
 \end{prop}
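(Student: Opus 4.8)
The plan is to read off both statements from the Zhu correspondence combined with the collapsing--level structure of $W^{2-\ell}(D_\ell,\theta)$. By Proposition~\ref{zhu-int} applied to the quotient $\overline V_{2-\ell}(D_\ell)$ of $V^{2-\ell}(D_\ell)$ (with conformal vector $\omega_\g$), its irreducible modules in $KL^{2-\ell}$ are in bijection with the irreducible finite--dimensional $A(\overline V_{2-\ell}(D_\ell))$--modules, the module $L(\widehat\Lambda)$ with $\widehat\Lambda_{|\h}=\mu$ corresponding to its top $V(\mu)$; writing $\mu=t\omega_\ell$ (resp.\ $t\omega_{\ell-1}$) gives $\widehat\Lambda=(2-t-\ell)\Lambda_0+t\Lambda_\ell$ (resp.\ $(2-t-\ell)\Lambda_0+t\Lambda_{\ell-1}$), so (1) and (2) are equivalent and it suffices to describe $A(\overline V_{2-\ell}(D_\ell))$. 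Since the Zhu functor is compatible with quotients, $A(\overline V_{2-\ell}(D_\ell))\cong U(D_\ell)/I$, with $I$ the two--sided ideal generated by the image $[v_1]$ of the weight--$2\epsilon_1$ singular vector $v_1$ (the $n=1$ case of \eqref{singvect-D-old}); Zhu's product formula on vectors of the shape $x(-1)y(-1)\vac$ yields, up to a nonzero scalar, $[v_1]=\varpi:=\sum_{i=2}^{\ell}e_{\epsilon_1-\epsilon_i}e_{\epsilon_1+\epsilon_i}\in U(D_\ell)$. Hence the finite--dimensional irreducible $A(\overline V_{2-\ell}(D_\ell))$--modules are exactly the $V(\mu)$, $\mu\in P_+$, on which $\varpi$ acts by $0$, and the claim is that this holds precisely for $\mu\in\{t\omega_\ell,\ t\omega_{\ell-1}:t\in\Z_{\ge0}\}$.

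For the direction that these modules occur, one checks $\varpi\,V(t\omega_\ell)=\varpi\,V(t\omega_{\ell-1})=0$. For $t=1$ this is immediate: in the fermionic model of the half--spin modules on $\Lambda^\bullet\C^\ell$ the factors $e_{\epsilon_1-\epsilon_i},e_{\epsilon_1+\epsilon_i}$ act through creation/annihilation operators and each summand becomes, up to sign, $\xi_1\partial_i\,\xi_1\xi_i=-\xi_1^2\partial_i\xi_i=0$. For general $t$ the symmetric powers are handled via the free--field realisation of $\overline V_{2-\ell}(D_\ell)$ and of the modules $L((2-t-\ell)\Lambda_0+t\Lambda_\ell)$, $L((2-t-\ell)\Lambda_0+t\Lambda_{\ell-1})$ inside a fermionic Fock space, following \cite{P}; this is the analogue of the homomorphism $\overline\Phi$ and \cite[Lemma~7.1]{AKMPP-new} used for $\mathcal V_{-2}(D_\ell)$, and it exhibits these modules directly as $\overline V_{2-\ell}(D_\ell)$--modules in $KL^{2-\ell}$.

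For the converse --- that no other $V(\mu)$ occurs --- I would argue by Hamiltonian reduction exactly as in Lemma~\ref{collapsing-d} and Proposition~\ref{clas-2-intermediate}. Here $2-\ell=-h^\vee/2+1$ is a collapsing level for $D_\ell$ with $\g^\natural=A_1\oplus D_{\ell-2}$ and $\mathcal V^{2-\ell}(\g^\natural)=V^{0}(A_1)\otimes V^{4-\ell}(D_{\ell-2})$. Since $v_1$ is a $\widehat\g$--singular vector of $\h$--weight $2\epsilon_1$, formula \eqref{lowest-conf-weight-W} and \cite[(6.14)]{KW2} show that $H_\theta(\langle v_1\rangle)$ is a highest--weight $W^{2-\ell}(D_\ell,\theta)$--module whose highest weight vector sits at conformal weight $1$ with $\mathcal V^{2-\ell}(\g^\natural)$--weight $(2\omega_1^{A_1},0)$; the only such vector is $J^{\{e_{\epsilon_1-\epsilon_2}\}}_{(-1)}\vac$, so $H_\theta(\langle v_1\rangle)$ contains all $J^{\{a\}}_{(-1)}\vac$, $a\in A_1$. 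Now let $L(\widehat\Lambda)$ be irreducible in $KL^{2-\ell}$ over $\overline V_{2-\ell}(D_\ell)$ and put $\mu=\widehat\Lambda_{|\h}\in P_+$ (Proposition~\ref{fd}), written $\mu=(\mu_1,\dots,\mu_\ell)$ in the $\epsilon$--basis. By Remark~\ref{rm} and Theorem~\ref{A}, $H_\theta(L(\widehat\Lambda))$ is a nonzero highest--weight $W^{2-\ell}(D_\ell,\theta)$--module; it is annihilated by $H_\theta(\langle v_1\rangle)$, hence $Y(J^{\{a\}},z)=0$ on it for $a\in A_1$, so its $A_1$--highest weight vanishes, i.e.\ $\mu_1=\mu_2$. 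Comparing the $(\omega_{\g^\natural})_0$--eigenvalue on its lowest component, given by \eqref{lowest-conf-weight-W} as $\frac{(\mu+2\rho,\mu)}{2(k+h^\vee)}-\mu(x)$, with the Sugawara conformal weight of the $D_{\ell-2}$--module $L_{D_{\ell-2}}(\mu_3,\dots,\mu_\ell)$ (the $A_1$--part acting trivially), and using $k+h^\vee=\ell$, $(4-\ell)+h^\vee_{D_{\ell-2}}=\ell-2$, $\rho_{D_{\ell-2}}=\rho_{D_\ell}|_{\epsilon_3,\dots,\epsilon_\ell}$, the identity collapses to
$$\sum_{i=3}^{\ell}(\mu_1-\mu_i)\bigl(\mu_1+\mu_i+2(\ell-i)\bigr)=0 .$$
As $\mu$ is $D_\ell$--dominant and $\mu_1=\mu_2$, every summand is $\ge0$, so each vanishes, forcing $\mu_3=\dots=\mu_{\ell-1}=\mu_1$ and $\mu_\ell=\pm\mu_1$, i.e.\ $\mu=t\omega_\ell$ or $\mu=t\omega_{\ell-1}$ with $t=2\mu_1\in\Z_{\ge0}$. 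Together with the previous paragraph this gives (1), hence (2).

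The step I expect to be the real obstacle is establishing that $\varpi$ annihilates all of $V(t\omega_\ell)$ and $V(t\omega_{\ell-1})$ for $t\ge2$ --- equivalently, producing the infinitely many modules --- which genuinely requires the free--field realisation rather than a weight--support estimate; by contrast the ``only if'' half is comparatively soft once $\mu_1=\mu_2$ has been extracted from the vanishing of the $A_1$--currents, because the resulting quadratic form is manifestly sign--definite on the dominant chamber.
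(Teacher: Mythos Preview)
The paper does not prove this proposition; it simply cites \cite{P}, where the result is obtained by a direct Zhu--algebra computation: one writes $A(\overline V_{2-\ell}(D_\ell))\cong U(D_\ell)/\langle\bar v\rangle$ and determines, by elementary weight arguments on $V(\mu)$, exactly which dominant integral $\mu$ satisfy $\bar v\cdot V(\mu)=0$. Your route is genuinely different: you transport the problem through $H_\theta$, mimicking the paper's own Section~6 analysis of $k=-2$ but with the roles of the $A_1$ and $D_{\ell-2}$ factors of $\g^\natural$ swapped (since here $k_{A_1}=0$ rather than $k_{D_{\ell-2}}=0$). The payoff is that your ``only if'' direction becomes a clean positivity argument on the dominant chamber, uniform in $\ell$, whereas the Zhu--algebra approach in \cite{P} is more computational; conversely, \cite{P} is self--contained, while your argument imports the machinery of \cite{Araduke,KW2,AKMPP-JA}.

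Your argument is essentially correct, but there is one step you pass over too quickly. You equate the eigenvalue \eqref{lowest-conf-weight-W}, which is the $L(0)=\omega_0$--eigenvalue of the $W$--algebra, with the $D_{\ell-2}$ Sugawara eigenvalue. This requires $\omega=\omega_{\g^\natural}$ \emph{in the quotient $H_\theta(\overline V_{2-\ell}(D_\ell))$}, not merely in the simple $W_{2-\ell}(D_\ell,\theta)$. To close this gap you should note that once the $A_1$--currents lie in $H_\theta(\langle v_1\rangle)$, so do all $G^{\{u\}}$: since $\g_{-1/2}\cong\C^2\otimes\C^{2\ell-4}$ as $A_1\times D_{\ell-2}$--module, one has $[A_1,\g_{-1/2}]=\g_{-1/2}$, and $(J^{\{a\}})_0 G^{\{v\}}=G^{\{[a,v]\}}$ then forces every $G^{\{u\}}$ into the ideal. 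At a collapsing level, $\omega-\omega_{\g^\natural}$ lies in the ideal generated by the $G^{\{u\}}$ (this is the content of \cite[Proposition~4.1]{AKMPP-JA}), so $\omega=\omega_{\g^\natural}=\omega_{D_{\ell-2}}$ in $H_\theta(\overline V_{2-\ell}(D_\ell))$ and your conformal--weight comparison is justified. Your ``if'' direction is also thinner than the converse: the $t=1$ spinor check is fine, but passing to $t\ge 2$ really does need the explicit free--field realisation from \cite{P} (or an equivalent construction), since the embedding $V(t\omega_\ell)\hookrightarrow V(\omega_\ell)^{\otimes t}$ does not by itself linearise the action of the degree--two element $\varpi$.
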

In the odd rank case $D_{2 \ell -1}$, the modules from Proposition \ref{prop-rep-th-D} (2) provide
a complete list of irreducible $V_{3-2\ell} (D_{2\ell -1} )$--modules from the category $KL_{3-2 \ell}$ (cf. \cite{AP}).
The  paper \cite{AP} also contains a fusion rules result in the category $KL_{3-2 \ell}$.
Detailed fusion rules analysis will be presented   elsewhere.

On the other hand, Theorem \ref{thm-classification-unique} implies that in the even rank case $D_{2 \ell}$,
$V_{2-2\ell} (D_{2\ell} )$ is the unique irreducible $V_{2-2\ell} (D_{2\ell} )$--module from the category $KL_{2-2\ell}$.
In the next section we will give an explanation of this difference using singular vectors existing in the
even rank case $D_{2 \ell}$.

\subsection{Singular vectors in $V^{n-2 \ell+1}(D_{2 \ell})$} \label{sect-sigv-even}

In this section, we construct more singular vectors in $V^{n-2 \ell+1}(D_{2 \ell})$. In the case
$n=1$, we show that the maximal submodule of $V^{2-2 \ell}(D_{2 \ell})$ is generated
by three singular vectors. We present explicit formulas for these singular vectors.

Let ${\mathfrak g}$ be a simple Lie algebra of type $D_{2 \ell}$. Denote by $S_{2 \ell}$ the group of
permutations of $2 \ell$ elements. Let
$$\Pi _{\ell}= \Big\{ p \in S_{2 \ell}  \ \vert \ p^2=1, \ p(i) \neq i, \forall i \in \{1, \ldots, 2 \ell \}  \Big\}$$
be the set of fixed-points free involutions, which is well known to have  $(2 \ell -1)!!= 1 \cdot 3 \cdot \ldots \cdot (2\ell-1)$  elements.
For $i \neq j$, denote by $(i\,j) \in S_{2 \ell}$ the transposition of $i$ and $j$. Then, any $p \in \Pi_\ell$ admits a unique
decomposition of the form:
$$ p=(i_1\,j_1)\cdots(i_\ell\,j_\ell), $$
such that $i_h<j_h$ for $1\leq h\leq \ell$, and $i_1 < \ldots < i_{\ell}$. Define a permutation $\bar{p} \in S_{2 \ell}$ by:
$$ \bar{p}(2h-1)= i_h, \ \bar{p}(2h)= j_h, \,1\leq h\leq \ell. $$
Thus, we have a well defined map $p \mapsto \bar{p}$ from $\Pi_\ell$ to $S_{2 \ell}$.
Define the function  $s: \Pi_\ell \to \{ \pm 1 \}$ as follows:
$$ s(p) = \mbox{sign} (\bar{p}),$$
where $\mbox{sign} ( q )$ denotes the sign of the permutation $q \in S_{2 \ell}$.

We have:
\begin{thm} \label{sing-vectD-novi}
The vector
\begin{equation}\label{wn}w_n = \Big( \sum _{p \in \Pi _{\ell}} s(p) \prod_{ { i \in \{1, \ldots , 2 \ell \} } \atop {i < p(i)} } e_{\epsilon_i + \epsilon_{p(i)}} (-1) \Big) ^n {\bf 1} \end{equation}
is a singular vector in $V^{n-2 \ell+1}(D_{2 \ell})$, for any $n \in \Z _{>0}$.
\end{thm}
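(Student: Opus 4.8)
The plan is to prove the statement in two stages: first establish it for $n=1$ (that the specific weight-two vector $w_1$ is singular), and then bootstrap to general $n$ by showing that a suitable power of a weight-two singular vector remains singular. For the $n=1$ case, I would observe that $w_1$ has weight $-(2\ell-2)\Lambda_0 + \epsilon_1 + \cdots + \epsilon_{2\ell}$ at conformal weight $2$; one needs to check that $e_i(0) w_1 = 0$ for the finite simple roots $i = 1, \dots, 2\ell$ and that $e_0(1) w_1 = (f_\theta)(1)\dots$, i.e. $e_{-\theta}(1) w_1 = 0$ where $-\theta$ is (up to sign conventions) $-\epsilon_1 - \epsilon_2$. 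Since $w_1$ lies in the degree-$2$ part of $V^{k}(D_{2\ell})$, this is a finite linear-algebra computation: $e_\alpha(0)$ acts by derivations and $e_{-\epsilon_1-\epsilon_2}(1)$ lowers conformal weight by $1$, landing in the (finite-dimensional) degree-$1$ space $\g(-1)\vac$. The antisymmetrization over fixed-point-free involutions weighted by $s(p)$ is exactly the combinatorial device that forces these images to cancel; this is the Pfaffian-type structure, and the sign function $s$ is chosen so that $w_1$ transforms correctly. I expect this computation is already implicit in the cited works (\cite{AM}, \cite{P}) for small rank, so the main new content is identifying the general-rank formula and checking the signs match.

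For the passage from $n=1$ to general $n$, the key point is that if $v = w_1$ is a singular vector of conformal weight $h_1 = 2$ in $V^{k_1}(\g)$ with $\bar v = \epsilon_1 + \cdots + \epsilon_{2\ell}$, and if we consider the vector $v^n := (w_1)^n \vac$ interpreted via normally-ordered products (which is what \eqref{wn} means — the product of the degree-one fields $e_{\epsilon_i+\epsilon_{p(i)}}(-1)$ all commute since the roots $\epsilon_i + \epsilon_j$ for a perfect matching are mutually non-summing, so there is no ordering ambiguity), then $w_n$ has weight $-(2\ell - 2 + n)\Lambda_0 + n(\epsilon_1 + \cdots)/\dots$. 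Wait — more precisely, $w_n$ sits at conformal weight $2n$ and $\h$-weight $n(\epsilon_1 + \cdots + \epsilon_{2\ell})$, hence its affine weight is $(n - 2\ell + 1)\Lambda_0 + \cdots$, matching the level $k = n - 2\ell + 1$. To show $w_n$ is singular at this level, I would use the same direct check: $e_i(0) w_n = 0$ follows because $e_i(0)$ is a derivation annihilating each factor (for suitable simple roots), and $e_{-\theta}(1) w_n = 0$ requires showing that contracting one of the $2n$ factors gives zero after the full antisymmetrization — which reduces to the $n=1$ computation applied factor-by-factor, together with the observation that cross terms among the commuting factors vanish. Alternatively, and perhaps more cleanly, one can argue that $w_n$ is a highest-weight vector by realizing it inside a larger vertex-algebra context (e.g. via the $\beta\gamma$/Weyl system realization used elsewhere in the paper, or via the homomorphism $\Phi\colon V^{k}(D_{2\ell})\to M_{2\ell}$) and checking it maps to $0$ or to an obviously singular image, then invoking the uniqueness of the maximal submodule.

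The main obstacle I anticipate is the sign bookkeeping in verifying $e_{-\theta}(1) w_n = 0$: when $e_{-\epsilon_1 - \epsilon_2}(1)$ hits the normally-ordered product, it can contract against any factor $e_{\epsilon_i + \epsilon_{p(i)}}(-1)$ for which $\{1,2\} \cap \{i, p(i)\} \neq \emptyset$, producing a Cartan element $h(0)$ times the remaining factors (plus possibly a bracket term $e_{\epsilon_j - \epsilon_1}$-type contribution), and one must show that summing $s(p)$ over all involutions — reorganized according to which pair contains $1$ and which contains $2$ — yields zero. The cleanest way to see this is to recognize the sum $\sum_{p}s(p)\prod e_{\epsilon_i+\epsilon_{p(i)}}$ as (the image under a natural map of) a Pfaffian of the $2\ell \times 2\ell$ antisymmetric matrix with $(i,j)$-entry $e_{\epsilon_i + \epsilon_j}(-1)$ for $i<j$, and to use the classical cofactor expansion of a Pfaffian together with the fact that the relevant "partial contraction" matrix is degenerate at level $k = n - 2\ell + 1$ precisely because of the normalization $(\theta,\theta) = 2$ and \eqref{rhotheta}. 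I would first try the brute-force check for $\ell = 2$ ($D_4$), where the formula reduces to $w_1$ from \eqref{sing-D4-1} (the three-term Pfaffian expression), confirm it matches, and then lift the argument to general $\ell$ using the Pfaffian cofactor identity.
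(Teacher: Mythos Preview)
Your overall strategy --- direct verification of the singular-vector conditions $e_{\epsilon_k-\epsilon_{k+1}}(0)w_n=0$, $e_{\epsilon_{2\ell-1}+\epsilon_{2\ell}}(0)w_n=0$, and $e_{-(\epsilon_1+\epsilon_2)}(1)w_n=0$, organized around the Pfaffian structure of the sum over fixed-point-free involutions --- is exactly what the paper does (the paper's proof is simply ``direct verification'' of these three relations for all $n$ at once). The Pfaffian interpretation you give is the right way to see the cancellations.

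However, there is a systematic computational error running through your proposal: each summand $\prod_{i<p(i)} e_{\epsilon_i+\epsilon_{p(i)}}(-1)$ is a product of $\ell$ factors (one per pair in a perfect matching of $2\ell$ points), not $2$. Hence $w_1$ has conformal weight $\ell$, not $2$, and $w_n$ has conformal weight $n\ell$, not $2n$. In particular, $e_{-\theta}(1)w_1$ lands in the conformal-weight-$(\ell-1)$ subspace, which for $\ell\ge 3$ is much larger than $\g(-1)\vac$, so the ``finite linear-algebra computation'' is more substantial than you suggest. More seriously, your bootstrap from $n=1$ to general $n$ does not go through as written: $w_1$ is singular at level $2-2\ell$, while $w_n$ must be shown singular at the \emph{different} level $n-2\ell+1$. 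The level enters the computation of $e_{-\theta}(1)w_n$ through the central term in $[e_{-\theta}(1),e_\theta(-1)]=h_\theta(0)+kK$, and the value of $k$ for which the total contribution cancels genuinely shifts with $n$; the $n=1$ case alone does not determine this. The paper sidesteps this by carrying out the verification uniformly in $n$ rather than inductively.
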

\begin{proof} Direct verification of relations $e_{\epsilon_k -
\epsilon_{k+1}}(0)w_n=0$, for $k=1,\ldots ,2\ell-1$,
$e_{\epsilon_{2\ell-1} + \epsilon_{2\ell}}(0)w_n=0$ and $e_{-(\epsilon_1
+ \epsilon_2)}(1)w_n=0$.
\end{proof}
\begin{rem} The vector $w_n$ has conformal weight $n \ell$ and its  $\g$--highest weight equals
$2n \omega _{2 \ell}= n (\epsilon _1 + \ldots + \epsilon _{2 \ell})$. In particular, for $n=1$,
the vector $w_1$ has conformal weight $\ell$ and  highest weight
$2 \omega _{2 \ell}= \epsilon _1 + \ldots + \epsilon _{2 \ell}$.
\end{rem}
\begin{ex} \label{ex-sing-v} Set $n=1$ for simplicity. For $\ell =2$ we recover the singular vector
$$ w_1= (e_{\epsilon_1 + \epsilon_2}(-1)
e_{\epsilon_3 + \epsilon_4}(-1)-  e_{\epsilon_1 + \epsilon_3}(-1)
e_{\epsilon_2 + \epsilon_4}(-1) + e_{\epsilon_1 + \epsilon_4}(-1)
e_{\epsilon_2 + \epsilon_3}(-1) ) {\bf 1}$$
in $V^{-2}(D_{4})$ of conformal weight $2$ from \cite{P}. For $\ell =3$, the formula for the singular vector in
$V^{-4}(D_{6})$ of conformal weight $3$ is more complicated. It is a sum of $5!!=15$ monomials:
\bea
&& w_1= ( e_{\epsilon_1 + \epsilon_2}(-1) e_{\epsilon_3 + \epsilon_4}(-1) e_{\epsilon_5 + \epsilon_6}(-1) -
e_{\epsilon_1 + \epsilon_2}(-1) e_{\epsilon_3 + \epsilon_5}(-1) e_{\epsilon_4 + \epsilon_6}(-1) \nonumber \\
&& + e_{\epsilon_1 + \epsilon_2}(-1) e_{\epsilon_3 + \epsilon_6}(-1) e_{\epsilon_4 + \epsilon_5}(-1) -
e_{\epsilon_1 + \epsilon_3}(-1) e_{\epsilon_2 + \epsilon_4}(-1) e_{\epsilon_5 + \epsilon_6}(-1) \nonumber \\
&&  + e_{\epsilon_1 + \epsilon_3}(-1) e_{\epsilon_2 + \epsilon_5}(-1) e_{\epsilon_4 + \epsilon_6}(-1)
- e_{\epsilon_1 + \epsilon_3}(-1) e_{\epsilon_2 + \epsilon_6}(-1) e_{\epsilon_4 + \epsilon_5}(-1) \nonumber \\
&&  + e_{\epsilon_1 + \epsilon_4}(-1) e_{\epsilon_2 + \epsilon_3}(-1) e_{\epsilon_5 + \epsilon_6}(-1) -
e_{\epsilon_1 + \epsilon_4}(-1) e_{\epsilon_2 + \epsilon_5}(-1) e_{\epsilon_3 + \epsilon_6}(-1)    \nonumber \\
&&  + e_{\epsilon_1 + \epsilon_4}(-1) e_{\epsilon_2 + \epsilon_6}(-1) e_{\epsilon_3 + \epsilon_5}(-1) -
e_{\epsilon_1 + \epsilon_5}(-1) e_{\epsilon_2 + \epsilon_3}(-1) e_{\epsilon_4 + \epsilon_6}(-1) \nonumber \\
&&  + e_{\epsilon_1 + \epsilon_5}(-1) e_{\epsilon_2 + \epsilon_4}(-1) e_{\epsilon_3 + \epsilon_6}(-1)
- e_{\epsilon_1 + \epsilon_5}(-1) e_{\epsilon_2 + \epsilon_6}(-1) e_{\epsilon_3 + \epsilon_4}(-1) \nonumber \\
&&   + e_{\epsilon_1 + \epsilon_6}(-1) e_{\epsilon_2 + \epsilon_3}(-1) e_{\epsilon_4 + \epsilon_5}(-1)
- e_{\epsilon_1 + \epsilon_6}(-1) e_{\epsilon_2 + \epsilon_4}(-1) e_{\epsilon_3 + \epsilon_5}(-1)  \nonumber \\
&&  + e_{\epsilon_1 + \epsilon_6}(-1) e_{\epsilon_2 + \epsilon_5}(-1) e_{\epsilon_3 + \epsilon_4}(-1) )  {\bf 1}. \nonumber
\eea
\end{ex}
Denote by $\vartheta$ the automorphism of $V^{n-2 \ell+1}(D_{2 \ell})$ induced by
the automorphism of the Dynkin diagram of $D_{2 \ell}$ of order two such
that
\bea
&& \vartheta (\epsilon_k -
\epsilon_{k+1}) = \epsilon_k -
\epsilon_{k+1}, \  k=1,\ldots ,2\ell-2,  \label{teta1} \\
&& \vartheta
(\epsilon_{2\ell-1} - \epsilon_{2\ell}) = \epsilon_{2\ell-1} + \epsilon_{2\ell}, \ \vartheta
(\epsilon_{2\ell-1} + \epsilon_{2\ell}) = \epsilon_{2\ell-1} - \epsilon_{2\ell}. \label{teta2} \eea

Theorem \ref{sing-vectD-novi} now implies that $\vartheta (w_n)$ is  a
singular vector  in $V^{n-2 \ell+1}(D_{2 \ell})$, for any $n \in \Z _{>0}$, also.
The vector $\vartheta (w_n)$ has conformal weight $n \ell$ and its  highest weight for ${\mathfrak g}$ is
$2n \omega _{2 \ell -1}= n (\epsilon _1 + \ldots + \epsilon _{2 \ell -1}- \epsilon _{2 \ell})$.

We consider the associated quotient vertex algebra
\begin{equation}\label{Vtilde}\widetilde{V}_{n-2\ell+1}(D_{2\ell}):= V^{n-2 \ell+1}(D_{2 \ell})
\big/ \langle v_n, w_n, \vartheta (w_n) \rangle,\end{equation}
where $v_n$ is given by relation (\ref{singvect-D-old}) (for $D_{2 \ell}$):
\bea v_n = \Big(\sum _{i=2}^{2 \ell} e_{\epsilon_1 - \epsilon_i}(-1)
e_{\epsilon_1 + \epsilon_i}(-1)\Big) ^n {\bf 1}. \nonumber \eea

In particular, for $n=1$ we have the vertex algebra
$$\widetilde{V}_{2-2\ell}(D_{2\ell})=V^{2-2\ell}(D_{2\ell})\big/
\langle v_1, w_1, \vartheta (w_1)\rangle.$$
Clearly, $\widetilde{V}_{2-2\ell}(D_{2\ell})$ is a
quotient of vertex algebra $\overline  V_{2-2\ell} (D_{2\ell} )$ from Subsection~\ref{subsect-V-bar}.
The associated Zhu algebra is
$$A ( \widetilde{V}_{2-2\ell}(D_{2\ell}) )= U({\mathfrak g})\big/
\langle \bar{v}, \bar{w}, \vartheta (\bar{w})\rangle,$$
where
$$ \bar{v} = \sum _{i=2}^{2 \ell} e_{\epsilon_1 - \epsilon_i}
e_{\epsilon_1 + \epsilon_i},\qquad \bar{w} =  \sum _{p \in \Pi _{\ell}} s(p) \prod_{ { i \in \{1, \ldots , 2 \ell \} }
\atop {i < p(i)} } e_{\epsilon_i + \epsilon_{p(i)}}. $$
\begin{lem} \label{lem-klasif}  \label{pomoc-11} We have:
\begin{itemize}
\item[(1)] $\bar{w} V(t \omega _{2 \ell}) \neq 0$, for $t \in \Z _{>0}$.
\item[(2)] $\vartheta (\bar{w}) V(t \omega _{2 \ell -1}) \neq 0$, for $t \in \Z _{>0}$.
\end{itemize}
\end{lem}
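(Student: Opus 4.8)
The strategy is to work in the Clifford (fermionic Fock space) realization of the half--spin $\g$--modules. Let $\g=so(4\ell)$, let $\mathcal F=\bigwedge\C^{2\ell}$ be the exterior algebra on generators $\psi_1^+,\dots,\psi_{2\ell}^+$ acting by exterior multiplication and $\psi_1^-,\dots,\psi_{2\ell}^-$ acting by contraction, so that $\{\psi_i^+,\psi_j^-\}=\delta_{ij}$ and like operators anticommute, and let $\mathbf v_\varnothing$ be the vacuum (empty wedge). Then $V(\omega_{2\ell})\cong\mathcal F^{\mathrm{even}}$ and $V(\omega_{2\ell-1})\cong\mathcal F^{\mathrm{odd}}$, with the positive root vectors acting by $e_{\epsilon_i-\epsilon_j}\mapsto\psi_i^+\psi_j^-$ ($i<j$) and $e_{\epsilon_i+\epsilon_j}\mapsto\psi_i^+\psi_j^+$ ($i\neq j$); here $\mathbf v_\varnothing$ has weight $-\tfrac12(\epsilon_1+\dots+\epsilon_{2\ell})=-\omega_{2\ell}$, which is a lowest weight since $w_0=-1$ on $D_{2\ell}$, and $\psi_1^+\cdots\psi_{2\ell}^+\mathbf v_\varnothing$ is a highest weight vector of $V(\omega_{2\ell})$.

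First I would treat the case $t=1$ of (1), which is precisely what the sign function $s$ is designed for. Writing $p=(i_1\,j_1)\cdots(i_\ell\,j_\ell)\in\Pi_\ell$ with $i_h<j_h$ and $i_1<\dots<i_\ell$, the indices $i$ with $i<p(i)$ are exactly $i_1<\dots<i_\ell$, so $\prod_{i<p(i)}\psi_i^+\psi_{p(i)}^+=\psi_{i_1}^+\psi_{j_1}^+\cdots\psi_{i_\ell}^+\psi_{j_\ell}^+$. Since $p$ is a fixed--point--free involution, these $2\ell$ indices are pairwise distinct, so this product applied to $\mathbf v_\varnothing$ is a nonzero element of $\bigwedge^{2\ell}\C^{2\ell}$, and reordering the anticommuting factors into $\psi_1^+\cdots\psi_{2\ell}^+$ produces exactly the sign $\mathrm{sign}(\bar p)=s(p)$. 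Hence $\bar w\,\mathbf v_\varnothing=\big(\sum_{p\in\Pi_\ell}s(p)^2\big)\psi_1^+\cdots\psi_{2\ell}^+\mathbf v_\varnothing=(2\ell-1)!!\,\psi_1^+\cdots\psi_{2\ell}^+\mathbf v_\varnothing\neq 0$.

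For general $t\in\Z_{>0}$ I would use that $V(t\omega_{2\ell})$ occurs as a direct summand of $\mathrm{Sym}^tV(\omega_{2\ell})$; its $(-t\omega_{2\ell})$--weight space is one--dimensional and spanned by $\mathbf v:=\mathbf v_\varnothing^{\,t}$, so $\mathbf v\in V(t\omega_{2\ell})$, and since $\bar w\in U(\g)$ preserves this summand it suffices to show $\bar w\,\mathbf v\neq 0$ inside $\mathrm{Sym}^tV(\omega_{2\ell})$. Expanding each $e_{\epsilon_i+\epsilon_j}$ by the Leibniz rule on the symmetric power, $\bar w\,\mathbf v$ becomes a sum over $p\in\Pi_\ell$ and over maps $\phi\colon\{1,\dots,\ell\}\to\{1,\dots,t\}$ distributing the $\ell$ transpositions of $p$ among the $t$ tensor slots, and pairwise disjointness of these transpositions guarantees that no such term vanishes. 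I would then decompose $\mathrm{Sym}^t\mathcal F^{\mathrm{even}}$ according to the unordered tuple of exterior degrees of the $t$ slots: by the $t=1$ computation the contribution of the constant maps $\phi$ equals $t\,(2\ell-1)!!\cdot\mathrm{sym}\big(\psi_1^+\cdots\psi_{2\ell}^+\mathbf v_\varnothing,\,\mathbf v_\varnothing,\dots,\mathbf v_\varnothing\big)$, which lies in the summand having one slot of top degree $2\ell$ and the remaining $t-1$ of degree $0$, whereas every non--constant $\phi$ contributes to a summand of a different degree type; so there is no cancellation, $\bar w\,\mathbf v\neq 0$, and (1) follows. Part (2) is then obtained by twisting with the diagram automorphism $\vartheta$ of \eqref{teta1}--\eqref{teta2}: $\vartheta$ fixes each $e_{\epsilon_i+\epsilon_j}$ with $i,j<2\ell$ and sends $e_{\epsilon_i+\epsilon_{2\ell}}$ to $e_{\epsilon_i-\epsilon_{2\ell}}$, so it carries $\bar w$ to the element $\vartheta(\bar w)$ of the statement, and on the Cartan it acts by $\epsilon_{2\ell}\mapsto-\epsilon_{2\ell}$, interchanging $\omega_{2\ell}$ and $\omega_{2\ell-1}$; hence the $\vartheta$--twist of $V(t\omega_{2\ell-1})$ is $V(t\omega_{2\ell})$, and $\vartheta(\bar w)$ annihilates $V(t\omega_{2\ell-1})$ if and only if $\bar w$ annihilates $V(t\omega_{2\ell})$, which is false by (1). (Concretely, on the Fock space $\vartheta$ is the particle--hole exchange $\psi_{2\ell}^+\leftrightarrow\psi_{2\ell}^-$, carrying $\mathcal F^{\mathrm{even}}$ onto $\mathcal F^{\mathrm{odd}}$.)

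The one point that needs real care is the third paragraph: keeping track of the interplay between the signs $s(p)$ and the fermionic reordering signs, and confirming that the ``single top--degree slot'' component of $\mathrm{Sym}^t\mathcal F^{\mathrm{even}}$ is a genuine direct summand on which $\bar w\,\mathbf v$ has a nonzero coordinate. Everything else --- the Clifford realization, the extreme--weight bookkeeping, and the $\vartheta$--twist --- is routine.
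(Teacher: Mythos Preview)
Your proof is correct and follows essentially the same approach as the paper's: both use the spinor (Clifford/Fock) realization to compute $\bar w\,v_{-\omega_{2\ell}}=C\,v_{\omega_{2\ell}}$ for $t=1$, then embed $V(t\omega_{2\ell})$ into a tensor power of $V(\omega_{2\ell})$ for general $t$, and obtain (2) from (1) by the diagram automorphism $\vartheta$. Your version is considerably more detailed---you actually compute $C=(2\ell-1)!!$, you use $\mathrm{Sym}^t$ rather than $\otimes^t$ (either works, since the lowest-weight line lies in the Cartan component), and your degree-multiset argument cleanly isolates the nonvanishing ``one top slot'' component---but the underlying strategy is the same.
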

\begin{proof} (1) Let $t=1$. Denote by $v_{\omega _{2 \ell}}$ the highest weight vector of $V(\omega _{2 \ell})$,
and by $v_{- \omega _{2 \ell}}$ the lowest weight vector of $V(\omega _{2 \ell})$. One can easily check, using the
spinor realization of $V(\omega _{2 \ell})$, that there exists a constant $C \neq 0$ such that
$$ \bar{w}( v_{- \omega _{2 \ell}}) = C v_{\omega _{2 \ell}}. $$
For general $t \in \Z _{>0}$, the claim follows using the embedding of $V(t \omega _{2 \ell})$ into
$V( \omega _{2 \ell}) ^{\otimes t}$. Claim (2) follows similarly.
\end{proof}
\begin{thm} \label{class-d-direct} We have:
\item[(i)]
The trivial module $\C$ is the unique finite-dimensional irreducible module for
$A ( \widetilde{V}_{2-2\ell}(D_{2\ell}) )$.
\item[(ii)]
$V_{2-2\ell}(D_{2\ell})$ is the unique irreducible $\g$--locally finite module for
$\widetilde{V}_{2-2\ell}(D_{2\ell})$.
\item[(iii)] The vertex operator algebra $\widetilde{V}_{2-2\ell}(D_{2\ell})$ is simple,
i.e.
$$V_{2-2\ell}(D_{2\ell})= V^{2-2\ell}(D_{2\ell})\big/
\langle  v_1,  w_1, \vartheta (w_1)\rangle.$$
\end{thm}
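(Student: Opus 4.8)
The plan is to read off all three statements from the presentation of the Zhu algebra $A(\widetilde{V}_{2-2\ell}(D_{2\ell})) = U(\g)/\langle \bar v,\bar w,\vartheta(\bar w)\rangle$ recorded just before the statement, combined with Proposition~\ref{prop-rep-th-D}, Lemma~\ref{pomoc-11}, the Zhu--algebra correspondence (Propositions~\ref{fd} and~\ref{zhu-int}), and Corollary~\ref{simplicity-1}. The substance is entirely in part~(i); parts~(ii) and~(iii) are then formal consequences.

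For (i): by construction $\widetilde{V}_{2-2\ell}(D_{2\ell})$ is a quotient of $\overline V_{2-2\ell}(D_{2\ell})$, so $A(\widetilde{V}_{2-2\ell}(D_{2\ell}))$ is the quotient of $A(\overline V_{2-2\ell}(D_{2\ell})) = U(\g)/\langle\bar v\rangle$ by the two--sided ideal generated by the images $\bar w$, $\vartheta(\bar w)$ of the singular vectors $w_1$, $\vartheta(w_1)$; this is the standard compatibility of the Zhu functor with quotients by ideals generated by singular vectors, and is exactly what produces the displayed presentation. Hence a finite--dimensional irreducible $A(\widetilde{V}_{2-2\ell}(D_{2\ell}))$--module is precisely a finite--dimensional irreducible $A(\overline V_{2-2\ell}(D_{2\ell}))$--module annihilated by $\bar w$ and $\vartheta(\bar w)$. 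By Proposition~\ref{prop-rep-th-D}(1) the latter are the modules $V(t\omega_{2\ell})$ and $V(t\omega_{2\ell-1})$, $t\in\Z_{\ge 0}$. For $t\in\Z_{>0}$, Lemma~\ref{pomoc-11}(1) shows that $\bar w$ acts nontrivially on $V(t\omega_{2\ell})$ and Lemma~\ref{pomoc-11}(2) shows that $\vartheta(\bar w)$ acts nontrivially on $V(t\omega_{2\ell-1})$, so none of these descends. On $\C = V(0)$ all of $\g$, and hence $\bar v,\bar w,\vartheta(\bar w)$, act by zero, so $\C$ does descend. Therefore $\C$ is the unique finite--dimensional irreducible $A(\widetilde{V}_{2-2\ell}(D_{2\ell}))$--module, which is (i).

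Part (ii) then follows: by Propositions~\ref{fd} and~\ref{zhu-int} the irreducible $\g$--locally finite $\widetilde{V}_{2-2\ell}(D_{2\ell})$--modules are exactly the irreducible modules in $KL^{2-2\ell}$, and these are in bijection with the finite--dimensional irreducible $A(\widetilde{V}_{2-2\ell}(D_{2\ell}))$--modules; by (i) there is just one, corresponding to the module $V_{2-2\ell}(D_{2\ell})$. For (iii): since by (i) the trivial module is the unique finite--dimensional irreducible $A(\widetilde{V}_{2-2\ell}(D_{2\ell}))$--module and $\widetilde{V}_{2-2\ell}(D_{2\ell})$ is a quotient of $V^{2-2\ell}(D_{2\ell})$, Corollary~\ref{simplicity-1} gives $\widetilde{V}_{2-2\ell}(D_{2\ell}) = V_{2-2\ell}(D_{2\ell})$; equivalently, the maximal ideal of $V^{2-2\ell}(D_{2\ell})$ equals $\langle v_1,w_1,\vartheta(w_1)\rangle$. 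The only step with real content is Lemma~\ref{pomoc-11} --- the non--vanishing of $\bar w$ (resp.\ $\vartheta(\bar w)$) on the half--spin--type modules, which is proved separately via the spinor realization --- and given it the argument above is bookkeeping; the one point to be careful about is the identification of $A(\widetilde{V}_{2-2\ell}(D_{2\ell}))$ with $A(\overline V_{2-2\ell}(D_{2\ell}))/\langle\bar w,\vartheta(\bar w)\rangle$, which rests on Zhu's theory together with the fact (Theorem~\ref{sing-vectD-novi} and the paragraph following it) that $w_1$ and $\vartheta(w_1)$ are singular vectors of $V^{2-2\ell}(D_{2\ell})$.
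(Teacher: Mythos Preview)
Your proposal is correct and follows essentially the same route as the paper: use Proposition~\ref{prop-rep-th-D}(1) to list the candidate finite-dimensional irreducibles for $A(\overline V_{2-2\ell}(D_{2\ell}))$, eliminate the $t>0$ cases via Lemma~\ref{pomoc-11} (= Lemma~\ref{lem-klasif}), and then deduce (ii) and (iii) from (i) by Proposition~\ref{zhu-int} and Corollary~\ref{simplicity-1}. Your write-up is slightly more explicit about the Zhu-algebra identification and about why $\C$ survives, but the argument is the same.
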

\begin{proof} (i) Proposition \ref{prop-rep-th-D} implies that the set
$$ \{ V(t \omega _{2 \ell}), V(t \omega _{2 \ell -1}) \ \vert \ t \in \Z _{\geq 0}  \} $$
provides a complete list of finite-dimensional irreducible modules for the algebra
$U({\mathfrak g})\big/\langle\bar{v}\rangle = A(\overline V_{2-2\ell} (D_{2\ell} ) ). $
Lemma \ref{lem-klasif} shows that $V(t \omega _{2 \ell})$ and $V(t \omega _{2 \ell -1})$ are not modules for
$A ( \widetilde{V}_{2-2\ell}(D_{2\ell}) )$, for $t \in \Z _{>0}$. Claim (i) follows. Claims (ii) and (iii) follow from (i) by applying Proposition \ref{zhu-int} and Corollary \ref{simplicity-1}.
\end{proof}
 \begin{rem}
 A general  character formula for certain simple  affine vertex algebras at negative integer levels has been recently  presented  by V. G. Kac and M. Wakimoto in \cite{KW3},
(more precisely, $\g=A_n, C_n$ for $k=-1$ and  ${\mathfrak g}= D_4, E_6, E_7, E_8$ for $k=-2,-3,-4,6$). Note that conditions (i)-(iii) of \cite[Theorem 3.1]{KW3} hold for vertex algebras $V_{-b}( D_{n})$, $n>4$, $b = 1, \dots , n -2$, too. We conjecture that condition (iv) of this theorem holds as well;  therefore  formula (3.1) in  \cite{KW3}  gives the  character formula.
 \end{rem}

\section{Conformal embedding of $\widetilde V(-4, D_6\times A_1) $ into $V_{-4}(E_7)$ }

In this section, we apply the results on representation theory of $V_{-4}(D_6)$ from previous sections
to the conformal embedding of $\widetilde V(-4, D_6\times A_1)$ into $V_{-4}(E_7)$.
This gives us an interesting example of a maximal semisimple equal rank subalgebra
such that the associated conformally embedded subalgebra
is not simple.

We use the construction of the root system of type $E_7$ from
\cite{Bou}, \cite{H}, and the notation for root vectors similar to the
notation for root vectors for $E_6$ from \cite{AP-ART}.

For a subset $S= \{i_1, \ldots ,i_k \} \subseteq \{1,2,3,4,5,6 \}$,
$i_1 < \ldots < i_k$, with odd number of
elements (so that  $k=1,3$ or $5$), denote by $e_{(i_1 \ldots i_k)}$ a
suitably chosen root vector associated to the positive root
$$ \frac{1}{2}\left(\epsilon_8 - \epsilon_7 + \sum
_{i=1}^{6} (-1)^{p(i)} \epsilon_i \right), $$
such that $p(i)=0$ for $i \in S$ and $p(i) =1$ for $i \notin S$. We
will use the symbol $f_{(i_1 \ldots i_k)}$ for the root vector
associated to corresponding negative root.

Note now that the subalgebra of $E_7$
generated by positive root vectors
\begin{equation}
e_{\epsilon_6 + \epsilon_5},
  e_{\alpha _1}= e_{(1)},
 e_{\alpha _3}= e_{\epsilon _2 - \epsilon _1},
 e_{\alpha _4}= e_{\epsilon _3 - \epsilon _2},
 e_{\alpha _2}= e_{\epsilon _1 + \epsilon _2},
 e_{\alpha _5}= e_{\epsilon _4 - \epsilon _3}
\end{equation}
and the associated negative root vectors is a simple Lie algebra of type
$D_6$. There are $30$ root vectors
associated to positive roots for $D_6$:
\bea  && e_{\epsilon_6 + \epsilon_5}, \ e_{\epsilon_8 - \epsilon_7},\nonumber \\
&&  e_{(i)}, \ i \in \{1,2,3,4 \}, \nonumber \\
&& e_{(ijk)}, \ i,j,k \in \{1,2,3,4 \}, \ i<j<k, \nonumber \\
&& e_{(i56)}, \ i \in \{1,2,3,4 \}, \nonumber \\
&& e_{(ijk56)}, \ i,j,k \in \{1,2,3,4 \}, \ i<j<k, \nonumber \\
&& e_{ \pm \epsilon_i + \epsilon_j}, \ i,j \in \{1,2,3,4 \}, i<j.
 \eea
Furthermore, the subalgebra of $E_7$
generated by  $e_{\epsilon _6 - \epsilon _5}$
and the  associated negative root vector is a simple Lie algebra of type
$A_1$. Thus, $D_6 \oplus A_1$  is a semisimple subalgebra of $E_7$.

It follows from \cite{AKMPP}, \cite{AP-ART} that the affine vertex  algebra
$\widetilde V(-4, D_6\times A_1) $
is conformally embedded in $V_{-4}(E_7)$.
 Remark that $\widetilde V(-4,A_1)=V_{-4}(A_1)$ (since $V^{-4} (A_1) = V_{-4} (A_1)$).   This implies    that $\widetilde V(-4,D_6\times A_1) \cong
\widetilde V(-4,D_6)\otimes V_{-4}(A_1)$.

It was shown in \cite{AM} that
\begin{eqnarray} \label{sing-vect-E7}
&&v_{E_{7}} = ( e_{\epsilon_8 - \epsilon_7}(-1) e_{\epsilon_6 + \epsilon_5}(-1) +  e_{(156)}(-1)e_{(23456)}(-1) + \nonumber \\
&&+ e_{(256)}(-1)e_{(13456)}(-1) + e_{(356)}(-1)e_{(12456)}(-1)+ \nonumber \\ &&+e_{(456)}(-1)e_{(12356)}(-1) ) {\bf 1} \end{eqnarray}
is a singular vector in $V^{-4}(E_7)$. Moreover,
$$V_{-4}(E_7) \cong V^{-4}(E_7)\big/\langle v_{E_{7}}\rangle.$$

Vectors $(e_{(12346)}(-1))^s {\bf 1}$, for $s \in \Z _{>0}$ are (non-trivial) singular vectors for
the affinization of $D_6 \oplus A_1$ in $V_{-4}(E_7)$ of highest
weights $-(s+4)\Lambda_0+ s \Lambda _6$ for $D_6 ^{(1)}$ and $-(s+4)\Lambda_0+ s
\Lambda _1$ for $A_1 ^{(1)}$.
Thus there exist highest weight modules $\widetilde{L}_{D_6} (-(s+4)\Lambda_0+ s \Lambda _6)$
and  $ \widetilde{L}_{A_1} (-(s+4)\Lambda_0+ s \Lambda _1)$, for $D_6 ^{(1)}$ and $A_1 ^{(1)}$, respectively
such that
$(\widetilde V(-4,D_{6}) \otimes V_{-4}(A_1) ). (e_{(12346)}(-1))^s {\bf 1}$ is isomorphic to $\widetilde{L}_{D_6} (-(s+4)\Lambda_0+ s \Lambda _6) \otimes
\widetilde{L}_{A_1} (-(s+4)\Lambda_0+ s \Lambda _1).$
This implies that
$$ \ L_{D_6} (-(s+4)\Lambda_0+ s \Lambda _6) \otimes
L_{A_1} (-(s+4)\Lambda_0+ s \Lambda _1)  $$
are irreducible $\widetilde V(-4, D_6\times A_1) $--modules,
for $s \in \Z _{>0}$. \par
In particular,
$ L_{D_6} (-(s+4)\Lambda_0+ s \Lambda _6)$ are irreducible ($D_6$--locally finite)
$\widetilde V(-4, D_6)$--modules, for $s \in \Z _{>0}$.
In the next proposition, we use the notation from  \eqref{vbar}, \eqref{wn}, \eqref{teta1}, \eqref{teta2}.
\begin{prop} We have:
\item [(1)]  Assume that $  \widetilde{L}_{D_6} (-6 \Lambda_0+ 2 \Lambda _6)  $ and  $\widetilde{L}_{D_6} (-6 \Lambda_0+ 2 \Lambda _5)  $ are highest weight  $\overline  V_{-4} (D_6)$--modules  from the category $KL^{-4}$, not necessarily irreducible. Then
$$\widetilde{L}_{D_6} (-6 \Lambda_0+ 2 \Lambda _6)  \boxtimes \widetilde{L}_{D_6} (-6 \Lambda_0+ 2 \Lambda _5)   = 0, $$
where $\boxtimes$ is the  tensor  functor for $KL^{-4}$--modules. In other words, we cannot have  a non-zero    $\overline  V_{-4} (D_6)$--module $M$ from $KL^{-4}$ and  a non-zero  intertwining operator of type
\bea   { M \choose   \widetilde{L}_{D_6} (-6 \Lambda_0+ 2 \Lambda _6)  \quad \widetilde{L}_{D_6} (-6 \Lambda_0+ 2 \Lambda _5)  } . \label{int} \eea
\item[(2)]    Relations $w_1 \ne 0$ and $\vartheta (w_1) = 0$ hold in $V_{-4}(E_7)$. In particular, $\widetilde{V}(-4,D_{6})$ is not simple.
\end{prop}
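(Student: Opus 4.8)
The plan is: to establish part (1) as a vanishing-of-fusion statement by a conformal-weight count; to prove $w_1\neq 0$ in part (2) via the Zhu algebra; to deduce $\vartheta(w_1)=0$ by combining part (1) with the simplicity of $V_{-4}(E_7)$; and to read off non-simplicity of $\widetilde V(-4, D_6)$ from $w_1\neq 0$.

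For part (1), I would first note that a short argument comparing highest weights and lowest conformal weights with the classification in Proposition~\ref{prop-rep-th-D} forces both $\widetilde L_{D_6}(-6\Lambda_0+2\Lambda_6)$ and $\widetilde L_{D_6}(-6\Lambda_0+2\Lambda_5)$ to be \emph{irreducible}: a proper submodule would have highest weight $-(4+t)\Lambda_0+t\Lambda_6$ or $-(4+t)\Lambda_0+t\Lambda_5$ with $t\ge 3$ (by positivity of conformal weight, $\tfrac{t(t+10)}{8}\ge 4$), but for $t\ge 3$ the difference of finite parts $2\omega_6-t\omega_6$, resp.\ $2\omega_6-t\omega_5$, fails to lie in the positive root cone of $\mathfrak{d}_6$. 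Hence these modules are $L(-6\Lambda_0+2\Lambda_6)$, $L(-6\Lambda_0+2\Lambda_5)$, each of lowest conformal weight $3$ --- indeed $3$ is the conformal weight of $w_1$, resp.\ $\vartheta(w_1)$ (the Remark after Theorem~\ref{sing-vectD-novi}). If their fusion product were non-zero, then, the two factors being irreducible, it would have lowest conformal weight exactly $3+3=6$, so some composition factor would have lowest conformal weight $6$. But by Proposition~\ref{prop-rep-th-D} every irreducible $\overline V_{-4}(D_6)$-module in $KL^{-4}$ has lowest conformal weight $\tfrac{t(t+10)}{8}$, whose integer values (the only relevant ones, since a fusion of two integer-graded modules is integer-graded) are $0,3,7,12,\dots$, and $6$ is not among them. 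Contradiction; so the fusion vanishes.

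For part (2), I would show $w_1\neq 0$ in $\widetilde V(-4, D_6)$ (equivalently in $V_{-4}(E_7)$, into which $\widetilde V(-4, D_6)$ embeds). If $w_1=0$ in $\widetilde V(-4, D_6)$, then, the monomials of $w_1$ being products of pairwise commuting positive root vectors, its image in $A(V^{-4}(D_6))=U(\mathfrak{d}_6)$ is exactly the element $\bar w=\sum_p s(p)\prod e_{\epsilon_i+\epsilon_{p(i)}}$ of Lemma~\ref{lem-klasif}, so $\bar w$ lies in the defining ideal of $A(\widetilde V(-4, D_6))$ and annihilates every $A(\widetilde V(-4, D_6))$-module. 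But $L_{D_6}(-6\Lambda_0+2\Lambda_6)$ is a $\widetilde V(-4, D_6)$-module (the $s=2$ case of the modules constructed above from the singular vector $(e_{(12346)}(-1))^s\vac$ in $V_{-4}(E_7)$), so by Proposition~\ref{zhu-int} its top $V(2\omega_6)$ is an $A(\widetilde V(-4, D_6))$-module with $\bar w\, V(2\omega_6)\neq 0$ by Lemma~\ref{lem-klasif}(1) --- a contradiction. So $w_1\neq 0$; and since $w_1$ is a singular vector for the affinization of $D_6$ of positive conformal weight, it generates a proper non-zero ideal, so $\widetilde V(-4, D_6)$ is not simple. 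Now suppose, for contradiction, that $\vartheta(w_1)\neq 0$ in $\widetilde V(-4, D_6)$ as well. After checking that $v_1=0$ in $\widetilde V(-4, D_6)$ --- so that $\widetilde V(-4, D_6)$ is a quotient of $\overline V_{-4}(D_6)$ and Proposition~\ref{prop-rep-th-D} and part (1) apply to it --- the ideals $\langle w_1\rangle$ and $\langle\vartheta(w_1)\rangle$ are non-zero and, by the irreducibility from part (1), are $\cong L(-6\Lambda_0+2\Lambda_6)$ and $\cong L(-6\Lambda_0+2\Lambda_5)$ as modules over the affinization of $D_6$. Restricting the vertex operator of $\widetilde V(-4, D_6)$ to $\langle w_1\rangle\otimes\langle\vartheta(w_1)\rangle$ gives an intertwining operator of type $\binom{\widetilde V(-4, D_6)}{L(-6\Lambda_0+2\Lambda_6) \quad L(-6\Lambda_0+2\Lambda_5)}$, which vanishes by part (1) (equivalently: $\langle w_1\rangle$ and $\langle\vartheta(w_1)\rangle$ are ideals with zero intersection, so their product is zero). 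This contradicts the fact that in the \emph{simple} vertex algebra $V_{-4}(E_7)=V^{-4}(E_7)/\langle v_{E_7}\rangle$ the product $Y(a,z)b$ of any two non-zero vectors $a,b$ is non-zero. Hence $\vartheta(w_1)=0$.

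The step I expect to be most delicate is the pair of bridging facts that are easy to state but require care: that $v_1=0$ in $\widetilde V(-4, D_6)$ --- needed to transport Proposition~\ref{prop-rep-th-D} and part (1) to $\widetilde V(-4, D_6)$ --- which should follow from the explicit conformal embedding (cf.\ \cite{AKMPP}, \cite{AP-ART}) but requires a small computation, e.g.\ that the $\mathfrak{d}_6$-submodule $V(2\omega_1)$ does not occur in the conformal weight $2$ component of $\widetilde V(-4, D_6)\subset V_{-4}(E_7)$; and the assertion, used in part (1), that a non-zero fusion product of two irreducibles of lowest conformal weight $3$ has lowest conformal weight exactly $6$, which rests on the leading coefficient of a non-zero intertwining operator between irreducible modules being non-zero on the tensor product of the top components. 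As an alternative route to $\vartheta(w_1)=0$, one may instead compute directly inside $V^{-4}(E_7)/\langle v_{E_7}\rangle$, using the explicit singular vectors \eqref{wn}, \eqref{sing-vect-E7} and the given realization of $D_6\subset E_7$, that $\vartheta(w_1)$ lies in $\langle v_{E_7}\rangle$ while $w_1$ does not; this is a finite but laborious computation in conformal weights $\le 3$.
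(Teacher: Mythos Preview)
Your argument for part (1) has a genuine gap: the claim that a non-zero fusion product of two irreducibles of lowest conformal weight $3$ must have lowest conformal weight $3+3=6$ is false. The conformal weight of the top of $M$ in an intertwining operator of type $\binom{M}{A\ B}$ is \emph{not} constrained to be $h_A+h_B$; it is governed by which mode $a_n$ is the first nonvanishing one on $B_{\mathrm{top}}$, and that index can be anything. (A familiar instance: in any integrable WZW model, $L(\Lambda)\boxtimes L(\Lambda^*)$ contains the vacuum, of conformal weight $0$, although $h_\Lambda+h_{\Lambda^*}>0$.) Your proposed justification---that the ``leading coefficient'' of a non-zero intertwining operator is nonzero on the tensor of the tops---does not pin down \emph{which} mode is leading, so it does not fix the conformal weight of the image. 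In the case at hand, had the summands of $V(2\omega_6)\otimes V(2\omega_5)$ been allowed tops, the fusion could land in conformal weights such as $2$ (for $V(2\omega_1)$), not $6$.

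The paper's argument for (1) uses the Frenkel--Zhu description of intertwining operators instead of a conformal-weight count: the top of any target $M$ must occur as a $\mathfrak{g}$-constituent of $V(2\omega_6)\otimes V(2\omega_5)$, and the explicit decomposition \eqref{decomo56} shows none of those constituents is of the form $V(t\omega_5)$ or $V(t\omega_6)$, which by Proposition~\ref{prop-rep-th-D} are the only admissible tops for $\overline V_{-4}(D_6)$-modules in $KL^{-4}$. This $\mathfrak{g}$-module obstruction is the missing idea; your conformal-weight list $0,3,7,12,\dots$ is a consequence of it (since the tops are $V(t\omega_5)$ or $V(t\omega_6)$), but it is not by itself an obstruction to fusion.

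Your part (2) is essentially sound and close in spirit to the paper, with one genuine difference worth noting: for $w_1\neq 0$ you use the Zhu-algebra action of $\bar w$ on $V(2\omega_6)$ via Lemma~\ref{lem-klasif}(1), while the paper instead argues that $w_1=0$ would force $\widetilde V(-4,D_6)=V_{-4}(D_6)$ by Theorem~\ref{class-d-direct}, contradicting the existence of the nontrivial modules $L_{D_6}(-(s+4)\Lambda_0+s\Lambda_6)$ via Theorem~\ref{thm-classification-unique}. Both routes work. For $\vartheta(w_1)=0$ the paper, like you, uses simplicity of $V_{-4}(E_7)$ together with the vanishing-of-intertwiners from part (1); it also invokes the auxiliary pair $(\vartheta(w_1),\,(e_{(12346)}(-1))^2\mathbf 1)$, which lets one conclude $\vartheta(w_1)=0$ without first knowing $w_1\neq 0$. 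The bridging fact $v_1=0$ in $\widetilde V(-4,D_6)$ that you flag is indeed needed to place everything inside the $\overline V_{-4}(D_6)$ framework; the paper uses it implicitly as well.
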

\begin{proof}
 For the proof of assertion (1) we first notice that the following  decomposition of $D_6$--modules holds:
\bea V_{D_6} (2 \omega_6) \otimes V_{D_6}( 2 \omega_5) &=& V_{D_6} (2 \omega_5 + 2 \omega_6) \oplus  V_{D_6} ( \omega_3 + \omega_5 +  \omega_6) \oplus V_{D_6} (2 \omega_3 )\nonumber \\
&& \oplus V_{D_6} ( \omega_1 + \omega_5 +  \omega_6) \oplus V_{D_6} (\omega_1 + \omega_3)  \oplus V_{D_6} (2 \omega_1). \label {decomo56} \eea
Assume that  $M$ is a non-zero $\overline  V_{-4} (D_6)$--module  in the category  $KL^{-4}$ such that there is a non-trivial intertwining operator of type (\ref{int}). Then the  Frenkel-Zhu formula for fusion rules implies that $M$ must contain a non-trivial subquotient whose lowest graded component appears in the decomposition  of $V_{D_6} (2 \omega_6) \otimes V_{D_6}( 2 \omega_5) $. But by Proposition  \ref{prop-rep-th-D},  the $D_6$--modules appearing in  (\ref{decomo56}) cannot be lowest components of  any $\overline  V_{-4} (D_6)$--module. This proves  assertion (1).

Assertion (1) implies that if $w_1 \ne 0$ and $\vartheta (w_1) \ne 0$ in  $V_{-4}(E_7)$, then $$Y(w_1, z) \vartheta (w_1)  = 0, $$ a contradiction since $V_{-4}(E_7)$ is a simple vertex algebra.   The same fusion rules  argument shows that if $\vartheta (w_1) \ne 0$ in  $V_{-4}(E_7)$, then
$$Y(\vartheta (w_1), z) e_{(12346)}(-1)^2 {\bf 1} = 0 , $$
which again contradicts the simplicity of $V_{-4}(E_7)$. So,  $\vartheta (w_1)  = 0$.

 But if $w_1 = 0$, then, by Theorem \ref{class-d-direct} (iii), we have that $\widetilde{V}(-4,D_{6}) = V_{-4} (D_6)$.   Theorem \ref{thm-classification-unique} implies
that $\widetilde{V}(-4,D_{6})$ is not simple, since the simple vertex
operator algebra $V_{-4}(D_6)$ has only one irreducible $D_6$--locally finite module, a contradiction. So $w_1 \ne 0$ and  claim (2) follows.
\end{proof}
Set
\begin{equation}\label{vmenoquattro}
\mathcal {V}_{-4}(D_{6}) =  \frac{V^{-4}(D_6)} { < v_1, \vartheta(w_1)  > }.
\end{equation}
   \begin{thm}
  We have:
\item[(1)] $\widetilde V(-4, D_6)\cong \mathcal {V}_{-4}(D_{6}) .$
\item[(2)] The set $\{  L_{D_6} (-(s+4)\Lambda_0+ s \Lambda _6) \ \vert \ \ \ s \in {\Z}_{\ge 0} \}$ provides  a complete list of irreducible $\mathcal {V}_{-4}(D_{6}) $--modules.
\end{thm}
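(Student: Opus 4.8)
The plan is to prove (1) and (2) together, following the template used in Section~5 for $\mathcal V_{-2}(D_\ell)$: the exact functor $H_\theta$, the fact that $-4$ is a collapsing level for $D_6$ (with $\g^\natural=sl(2)\oplus D_4$ and $W_{-4}(D_6,\theta)=V_{-2}(D_4)$), and the classification of $\overline V_{-4}(D_6)$--modules from Proposition~\ref{prop-rep-th-D}. First one observes that $\widetilde V(-4,D_6)$ is a quotient of $\mathcal V_{-4}(D_6)$: the vector $v_1$ vanishes in $V_{-4}(E_7)$ (already used above, since the modules $\widetilde L_{D_6}$ in the preceding Proposition are treated as $\overline V_{-4}(D_6)$--modules; cf.\ \cite{P}), and $\vartheta(w_1)=0$ in $V_{-4}(E_7)$ by the preceding Proposition, so the surjection $V^{-4}(D_6)\twoheadrightarrow\widetilde V(-4,D_6)$ factors through $\mathcal V_{-4}(D_6)=V^{-4}(D_6)/\langle v_1,\vartheta(w_1)\rangle$.

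For part (2), note that $A(\mathcal V_{-4}(D_6))=A(\overline V_{-4}(D_6))/\langle\vartheta(\bar w)\rangle$, so an irreducible $\overline V_{-4}(D_6)$--module in $KL^{-4}$ descends to $\mathcal V_{-4}(D_6)$ precisely when $\vartheta(\bar w)$ annihilates its lowest weight space. By Proposition~\ref{prop-rep-th-D} the irreducible $\overline V_{-4}(D_6)$--modules in $KL^{-4}$ are $L((-4-t)\Lambda_0+t\Lambda_6)$ and $L((-4-t)\Lambda_0+t\Lambda_5)$, $t\ge 0$. By Lemma~\ref{pomoc-11} one has $\vartheta(\bar w)V(t\omega_5)\neq 0$ for $t>0$, so the $\Lambda_5$--family (apart from $t=0$, which is the vacuum) is killed; on the other hand $\vartheta(\bar w)V(t\omega_6)=0$ for all $t\ge 0$, because each $L_{D_6}(-(t+4)\Lambda_0+t\Lambda_6)$ occurs inside $V_{-4}(E_7)$ as a $\widetilde V(-4,D_6)$--module (generated by $(e_{(12346)}(-1))^t\vac$), hence as an $\overline V_{-4}(D_6)$--module on which $\vartheta(w_1)$ already acts by $0$. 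Together with $\bar v_1V(t\omega_6)=0$ (Proposition~\ref{prop-rep-th-D}) this identifies the irreducible $\mathcal V_{-4}(D_6)$--modules in $KL^{-4}$ as exactly $\{L((-4-t)\Lambda_0+t\Lambda_6):t\ge 0\}$, proving (2); in particular $\mathcal V_{-4}(D_6)$ and its quotient $\widetilde V(-4,D_6)$ have the same irreducibles in $KL^{-4}$.

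It remains to show the surjection $\mathcal V_{-4}(D_6)\twoheadrightarrow\widetilde V(-4,D_6)$ is an isomorphism. Here I would prove, as in the Lemma following Proposition~\ref{clas-2-intermediate}, that $\mathcal V_{-4}(D_6)$ has a \emph{unique} nontrivial ideal $I$, with $\mathcal V_{-4}(D_6)/I=V_{-4}(D_6)$, by computing $H_\theta(\mathcal V_{-4}(D_6))$. Using \eqref{lowest-conf-weight-W} and (6.14) of \cite{KW2}, the image of $\langle v_1\rangle$ under $H_\theta$ is the ideal of $W^{-4}(D_6,\theta)$ generated by the conformal weight $1$, $sl(2)$--adjoint currents $J^{\{a\}}$ ($a\in sl(2)$), which at this (collapsing) level collapses the $sl(2)$--sector together with all the fields $G^{\{u\}}$, so that $H_\theta(\overline V_{-4}(D_6))$ is (a quotient of) an affine vertex algebra of type $D_4$ at level $-2$, and $H_\theta$ of the further singular vector $\vartheta(w_1)$ is a conformal weight $2$ singular vector of $D_4$--weight $2\omega_3^{D_4}$; after a $D_4$--triality automorphism one gets $H_\theta(\mathcal V_{-4}(D_6))\cong\mathcal V_{-2}(D_4)$, the vertex algebra of Section~5, whose maximal ideal is simple and which has the single family of modules $L((-2-t)\Lambda_0+t\Lambda_1)$, $t\ge 0$. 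Since no $L((-4-j)\Lambda_0+j\Lambda_6)$ with $j>0$ is annihilated by $H_\theta$ (as $\lambda(\alpha_0^\vee)=-4-j<0$), any nonzero proper ideal of $\mathcal V_{-4}(D_6)$ has nonzero $H_\theta$--image, equal (by the structure of $\mathcal V_{-2}(D_4)$) to its unique nontrivial ideal, whence by exactness of $H_\theta$ and the usual comparison argument that ideal is unique and $\mathcal V_{-4}(D_6)/I=V_{-4}(D_6)$. Consequently $\widetilde V(-4,D_6)$ is either $\mathcal V_{-4}(D_6)$ or $V_{-4}(D_6)$; but $V_{-4}(D_6)=V_{2-2\cdot 3}(D_{2\cdot 3})$ has a \emph{unique} irreducible $\g$--locally finite module by Theorem~\ref{class-d-direct}, while $\widetilde V(-4,D_6)$ has the infinitely many modules $L_{D_6}(-(s+4)\Lambda_0+s\Lambda_6)$. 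Hence $\widetilde V(-4,D_6)=\mathcal V_{-4}(D_6)$, which is (1).

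The main obstacle is the precise identification $H_\theta(\mathcal V_{-4}(D_6))\cong\mathcal V_{-2}(D_4)$ (up to triality): one must verify that quotienting $W^{-4}(D_6,\theta)$ by the image of $\langle v_1\rangle$ kills exactly the $sl(2)$--currents and the fields $G^{\{u\}}$ while leaving the whole $D_4$--sector intact — this is where the collapsing of the level is genuinely used — and that the residual singular vector produced by $v_1$ together with $H_\theta(\vartheta(w_1))$ matches the two defining singular vectors of $\mathcal V_{-2}(D_4)$ under a $D_4$--triality. A purely Zhu--algebraic shortcut to $\mathcal K=\ker(\mathcal V_{-4}(D_6)\to\widetilde V(-4,D_6))=0$ seems to be blocked by the possible non-reducedness of $A(\overline V_{-4}(D_6))$, so the $H_\theta$ route appears unavoidable.
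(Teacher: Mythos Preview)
Your proposal is correct and follows essentially the same route as the paper: identify $\widetilde V(-4,D_6)$ as a quotient of $\mathcal V_{-4}(D_6)$, compute $H_\theta(\mathcal V_{-4}(D_6))=\mathcal V_{-2}(D_4)$ and transport the unique-ideal structure back, then rule out $\widetilde V(-4,D_6)=V_{-4}(D_6)$. The only cosmetic differences are that the paper proves (1) before (2), and in the last step the paper invokes $w_1\ne 0$ in $\widetilde V(-4,D_6)$ (from the preceding Proposition) rather than the existence of infinitely many irreducibles---these are equivalent, since $w_1\ne 0$ is precisely what prevents $\widetilde V(-4,D_6)$ from collapsing to the simple quotient. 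Your ``main obstacle'' (the identification $H_\theta(\mathcal V_{-4}(D_6))\cong\mathcal V_{-2}(D_4)$) is indeed the key computation; the paper asserts it without further comment, relying on the same highest-weight/conformal-weight bookkeeping as in Lemma~\ref{collapsing-d}.
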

 \begin{proof}
 We first notice that   $ \widetilde V(-4, D_6) $ is a certain quotient of $\frac{V^{-4}(D_6)} { < v_1 , \vartheta (w_1) > }$, and that  $$H_{\theta}  (\frac{V^{-4}(D_6)} { < v_1 , \vartheta (w_1) > })  = \mathcal V_{-2} (D_4).$$
Since $ \mathcal V_{-2} (D_4)$  contains a unique non-trivial ideal which is maximal and simple, we conclude that $\frac{V^{-4}(D_6)} { < v_1 , \vartheta (w_1) > }$ also contains a unique ideal, and it must be  the ideal generated by $w_1$. Since in $\widetilde V(-4, D_6)$ we have that $w_1 \ne 0$, we conclude that
$$\widetilde V(-4, D_6) \cong  \frac{V^{-4}(D_6)} { < v_1, \vartheta(w_1)  > }.$$
The proof of assertion (2) follows from  (1), the classification result of $\overline  V_{-4} (D_6)$--modules from Proposition  \ref{prop-rep-th-D} and Lemma \ref{pomoc-11}.
 \end{proof}

\section{Funding}

This work was supported by the Croatian Science Foundation
[grant number 2634 to D.A. and O.P.];
and the QuantiXLie Centre of Excellence, a project cofinanced
by the Croatian Government and European Union
through the European Regional Development Fund - the
Competitiveness and Cohesion Operational Programme
[KK.01.1.1.01 to D.A. and O.P.].

\end{document}